\newcounter{proof}
{\stepcounter{proof}\begin{proof}}%
{\end{proof}}%
\newcounter{proofstep}[proof]
\newcounter{proofstep-noskip}[proof]
\newenvironment{proofstep-noskip}[1][]%
{\refstepcounter{proofstep-noskip}
  \ifnum\the\value{proofstep-noskip}>1
    \bigskip\par\noindent
  \fi
  \ifthenelse{\isempty{#1}}
    {\textit{Step \theproofstep-noskip. }}
    {\textit{#1.}}
  \noindent}%
{\par}%
\newcounter{proofcase}[proof]
{\refstepcounter{proofcase}\bigskip\par\noindent%
  \ifthenelse{\isempty{#1}}
    {\textit{Case \theproofcase. }}
    {\textit{#1.}}
  \noindent}%
{\par}%
\newcounter{proofcase-noskip}[proof]
\newenvironment{proofcase-noskip}[1][]%
{\refstepcounter{proofcase-noskip}
  \ifnum\the\value{proofcase-noskip}>1
    \bigskip\par\noindent
  \fi
  \ifthenelse{\isempty{#1}}
    {\textit{Case \theproofcase-noskip. }}
    {\textit{#1.}}
  \noindent}%
{\par}%
\theoremstyle{plain}
\newtheorem{thm}{Theorem}[section]
\newtheorem*{thm*}{Theorem}
\newtheorem{pro}[thm]{Proposition}
\newtheorem{lem}[thm]{Lemma}
\theoremstyle{definition}
\newtheorem{dfn}[thm]{Definition}
\theoremstyle{remark}
\newtheorem{rem}[thm]{Remark}
\numberwithin{equation}{section}
\newcommandx{\textref}[2][1=]{\hyperref[#2]{#1\ref*{#2}}}
\newcommandx{\textrefp}[2][1=]{(\hyperref[#2]{#1\ref*{#2}})}
\newcommand{\dif}{\ensuremath{\, \mathrm d}}
\DeclareMathOperator{\supp}{supp}
\DeclareMathOperator{\spn}{span}
\newcommand{\vvvert}{{\vert\kern-0.25ex\vert\kern-0.25ex\vert}}
\begin{document}

\title[Independent sums of Haar system Hardy spaces]%
{Factorization in independent sums of Haar system Hardy spaces}%

\author[K.~Konstantos]{Konstantinos Konstantos}%
\address{Konstantinos Konstantos, Department of Mathematics and Statistics, York University, 4700 Keele Street, Toronto, Ontario, M3J 1P3, Canada}%

\email{kostasko@yorku.ca}%

\author[T.~Speckhofer]{Thomas Speckhofer}%
\address{Thomas Speckhofer, Department of Mathematics and Statistics, York University, 4700 Keele Street, Toronto, Ontario, M3J 1P3, Canada}%
\email{tspeck@yorku.ca}%

\date{\today}%

\subjclass[2020]{%
  46B25, 
  46B09, 
  47A68, 
  47L20, 
  46E30, 
  30H10.
}

\keywords{Factorization property, primary factorization property, maximal operator ideals, Bourgain-Rosenthal-Schechtman spaces, rearrangement-invariant spaces, Hardy spaces.}%

\thanks{The first author was supported by NSERC Grant RGPIN-2021-03639.\\
\indent The second author was supported by the Austrian Science Fund~(FWF) projects I5231 and 10.55776/PAT9952223 and by NSERC Grant RGPIN-2021-03639.}

\begin{abstract}
  We introduce a generalization of the Bourgain-Rosenthal-Schechtman $R_{\omega}^p$ space: Let $Y$ be a Haar system Hardy space, i.e., a separable rearrangement-invariant function space on the unit interval or an associated Hardy space defined via the square function (such as dyadic~$H^1$). Then we define~$Y_{\omega}$ as the closed linear span in~$Y$ of independent distributional copies of the spaces~$Y_n$ of dyadic step functions at scale~$2^{-n}$. Combining finite-dimensional and infinite-dimensional techniques, we prove that the identity operator~$I$ on~$Y_{\omega}$ factors through every bounded linear operator~$T$ on~$Y_{\omega}$ which has large diagonal, and in general, the identity factors either through~$T$ or through~$I - T$.
\end{abstract}

\maketitle

\tableofcontents

\section{Introduction}
\label{sec:introduction}

In this paper, we prove factorization results for operators on \emph{independent sums of Haar system Hardy spaces}.
Our main motivation is to determine those bounded linear operators~$T$ on a given space~$X$ for which the identity operator~$I_X$ factors through~$T$, i.e., there exist bounded linear operators~$A,B$ such that the following diagram commutes:
\begin{equation*}
  \begin{tikzcd}
    X \arrow{r}{I_{X}} \arrow[swap]{d}{B} & X\\
    X \arrow{r}{T}& X \arrow[swap]{u}{A}
  \end{tikzcd}
\end{equation*}
Factorization problems of this type have played a crucial role in Banach space theory. In particular, they arose in the study of decompositions and complemented subspaces of classical Banach spaces (e.g., to identify primary Banach spaces) and in the theory of operator ideals. Recall that a Banach space~$X$ is primary if for every decomposition $X = E \oplus F$, either~$E$ or~$F$ is isomorphic to~$X$. This line of research was initiated in 1970 by J.~Lindenstrauss~\cite{MR0405074} and led to solutions of long-standing problems, such as the unconditional basic sequence problem, Banach's hyperplane problem, and the scalar-plus-compact problem~\cite{MR1201238,MR1315601,MR2784662,MR1999196}.

The classical sequence spaces $c_0$ and $\ell^p$, $1\le p<\infty$, were proved to be prime (which is stronger than primariness) in~1960 in A.~Pe{\l}czy\'{n}ski's seminal work~\cite{MR126145}. In~1975, the Lebesgue spaces $L^p$, $1\le p<\infty$, were shown to be primary by P.~Enflo and B.~Maurey~\cite{MR0397386}.
Subsequently, D.~Alspach, P.~Enflo and E.~Odell~\cite{MR0500076} gave an alternative proof and extended this result to separable rearrangement-invariant function spaces with
non-trivial Boyd indices, and P.~Enflo and
T.~W.~Starbird~\cite{MR0557491} proved the primariness of~$L^1$ using $E$-operators. Primariness has also been established for the dyadic Hardy space~$H^1$ and its dual $\mathrm{BMO}$ by P.~F.~X.~Müller~\cite{MR0931037,MR0955660}, for the bi-parameter spaces $L^p(L^q)$, $1 < p,q < \infty$ and~$L^p(X)$ (where $X$ has a symmetric basis) by M.~Capon~\cite{MR0687937,MR0688955}, for the bi-parameter dyadic Hardy space $H^1(\delta^2)$ by P.~F.~X.~Müller~\cite{MR1283008}, and for its dual~$\mathrm{BMO}(\delta^2)$ by R.~Lechner and P.~F.~X.~Müller~\cite{MR3436171}. Only recently, R.~Lechner, P.~Motakis, P.~F.~X.~Müller and Th.~Schlumprecht~\cite{MR4145794} have proved that the space~$L^1(L^p)$ is primary.

For most of the spaces mentioned above, primariness can be proved in two steps: In the first step, one proves that the given space~$X$ has the primary factorization property.
\begin{dfn}
We say that a Banach space~$X$ has the \emph{primary factorization property} if for every bounded linear operator~$T\colon X\to X$, the identity~$I_X$ factors either through~$T$ or through~$I_X - T$.
\end{dfn}
This implies that for any~$T$, the image of~$T$ or~$I_X - T$ has a complemented subspace isomorphic to~$X$. In the second step, one uses Pe{\l}czy\'{n}ski's decomposition method \cite{MR126145} (see also \cite[II.B.24]{MR1144277}) or a variation of this technique to obtain the desired isomorphism. In order for the decomposition method to be applicable, $X$ needs to satisfy Pe{\l}czy\'{n}ski's \emph{accordion property}, i.e., $X\sim \ell^p(X)$ for some $1\le p\le \infty$ or $X\sim c_0(X)$. Together with the primary factorization property, the accordion property implies primariness.

The primary factorization property is also related to the theory of operator ideals: For a Banach space~$X$, let $\mathcal{B}(X)$ denote the algebra of bounded linear operators on $X$. Then we define
\begin{equation*}
  \mathcal{M}_X = \{ T\in \mathcal{B}(X) : I_X \ne ATB\text{ for all } A, B\in \mathcal{B}(X) \}.
\end{equation*}
The set $\mathcal{M}_X$ is an ideal of $\mathcal{B}(X)$ if and only if it is closed under addition. It was observed by
D.~Dosev and W.~B.~Johnson~\cite{MR2586976} that if $\mathcal{M}_X$ is an ideal of $\mathcal{B}(X)$, then it is
automatically the \emph{largest} ideal (with respect to inclusion) of $\mathcal{B}(X)$. On the other hand, it is known that
$\mathcal{M}_X$ is closed under addition if and only if $X$ has the primary factorization property (see
\cite[Proposition~5.1]{MR2586976}).

In~\cite{MR4477014}, T.~Kania and R.~Lechner proved the primary factorization property for stopping-time Banach spaces. This was achieved by extending the general framework of \emph{strategically reproducible bases} established by R.~Lechner, P.~Motakis, P.~F.~X.~Müller and Th.~Schlumprecht \cite{MR4145794,MR4299595,MR4430957}, which involves winning strategies in infinite two-player games. Earlier, this framework was used to analyze operators on~$L^1$ with large diagonal with respect to the Haar basis:
Let~$X$ be a Banach space with a Schauder basis
$(e_j)_{j=1}^{\infty}$ and biorthogonal functionals $(e_j^{*})_{j=1}^{\infty}$. We say that a bounded linear operator $T\colon X\to X$ has \emph{large diagonal} with respect to~$(e_j)_{j=1}^{\infty}$ if its diagonal entries are uniformly bounded away from~$0$, i.e., $\inf_{j\in \mathbb{N}} |\langle e_j^{*}, Te_j \rangle| > 0$.
The study of operators with large diagonal goes back to
A.~D.~Andrew~\cite{MR0567081}, and they were explicitly studied
in~\cite{MR0890420,MR3861733,MR3819715,MR3794334,MR3990955,MR3910428,MR4460217,MR4145794}. In particular, in~\cite{MR4145794} it was proved that the Haar basis of~$L^1$ has the factorization property:
\begin{dfn}
  We say that a Schauder basis~$(e_j)_{j=1}^{\infty}$ of a Banach space~$X$ has the \emph{factorization property} if the identity~$I_X$ factors through every bounded linear operator~$T\colon X\to X$ which has large diagonal with respect to~$(e_j)_{j=1}^{\infty}$.
\end{dfn}
Subsequently, Kh.~V.~Navoyan~\cite{MR4635015} proved the factorization property for the Haar basis in separable rearrangement-invariant function spaces, assuming that the Haar basis is unconditional. More generally, R.~Lechner and the second author~\cite{MR4894823} developed a unified and systematic approach to prove the primary factorization property as well as the factorization property of the Haar basis in every \emph{Haar system Hardy space}~$Y$ (excluding only the Cantor space~$C(\Delta)$).
A Haar system Hardy space is defined as the completion of the linear span of the Haar system~$(h_I)_I$ under a rearrangement-invariant norm~$\|\cdot \|$ or under the associated square function norm $\|\cdot\|_{\circ}$ given by
\begin{equation*}
  \Bigl\| \sum_{I} a_I h_I \Bigr\|_{\circ}
  = \Bigl\| \Bigl(\sum_{I} a_I^2 h_I^2\Bigr)^{1/2} \Bigr\|.
\end{equation*}
In particular, this class contains~$L^p$ and dyadic~$H^p$, $1\le p<\infty$. The results in~\cite{MR4894823} are proved by performing a step-by-step reduction in which an arbitrary operator~$T$ is reduced to a constant multiple of the identity. Similar factorization results for multipliers on \emph{bi-parameter} Haar system Hardy spaces have recently been proved in~\cite{MR4816119}.

In~\cite{MR4839586}, the first author and P.~Motakis proved the factorization property as well as the primary factorization property for the Rosenthal~$X_{p,w}$ spaces and for the  Bourgain-Rosenthal-Schechtman $R_{\omega}^p$~space, $1<p<\infty$. The latter space is defined as an independent sum of finite-dimensional~$L^p_n$ spaces, and it was introduced in~\cite{MR632839} as part of an uncountable family of mutually nonisomorphic, complemented subspaces of~$L^p$ (for $1<p<\infty$ and $p\ne 2$). In the same spirit, P.~F.~X.~Müller and G.~Schechtman~\cite{MR1008719} constructed a new complemented subspace of the dyadic Hardy space~$H^1$ which is isomorphic to the independent sum of finite-dimensional $H^1_n$~spaces. More recently, M.~Rzeszut and M.~Wojciechowski~\cite{MR3648869} constructed a new idempotent Fourier multiplier on the Hardy space on the bidisc whose image also happens to be isomorphic to the independent sum of the spaces~$H^1_n$.

The proof of the factorization results for $R_\omega^p$ in~\cite{MR4839586} is again based on a step-by-step reduction, involving probabilistic and combinatorial arguments as well as the notion of orthogonal factors. In particular, the reduction to a diagonal operator uses a refinement of the probabilistic techniques introduced by R.~Lechner, which involve randomized block bases and were originally used to prove finite-dimensional, quantitative factorization results in Hardy spaces and $SL^{\infty}_n$~\cite{MR3990955,MR3910428}. In this finite-dimensional setting, the goal is to obtain a factorization of the identity on a smaller subspace through a given operator on a larger subspace while controlling the norms of all involved factors. These types of factorization results are closely related to the \emph{Restricted Invertibility Theorem} of J.~Bourgain and L.~Tzafriri~\cite{MR0890420}. The second author has recently extended the probabilistic techniques of R.~Lechner to prove finite-dimensional factorization results for all Haar system Hardy spaces~\cite{MR4884827}.

In this paper, we will combine these extended probabilistic techniques with infinite-dimensional methods to generalize the results of~\cite{MR4839586}, replacing the underlying space~$L^p$ with an arbitrary Haar system Hardy space~$Y$. We introduce the associated space~$Y_{\omega}$ as an independent sum of the finite-dimensional subspaces~$Y_n$ spanned by the first~$n+1$ levels of the Haar system. The space~$Y_{\omega}$ has a canonical basis $(h_I^n)_{(n,I)}$ indexed by pairs $(n,I)$ where $n$ is a natural number and $I$ is a dyadic interval. This construction is analogous to that of the space~$R_{\omega}^p$ and will be explained in detail in \Cref{sec:independent-sums}. In particular, this setting contains the independent sum of~$H^1_n$ spaces studied in~\cite{MR1008719,MR3648869}. Our main result is the following:
\begin{thm}\label{thm:main-result}
  Let~$Y$ be a Haar system Hardy space, and assume that the sequence of standard Rademacher functions is weakly null in~$Y$. Then $Y_{\omega}$ has the primary factorization property, and its canonical basis~$(h_I^n)_{(n,I)}$ has the factorization property.
  In particular, $\mathcal{M}_{Y_{\omega}}$ is the largest ideal of~$\mathcal{B}(Y_{\omega})$.
\end{thm}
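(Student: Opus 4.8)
The plan is to follow the two-part blueprint developed for the Bourgain-Rosenthal-Schechtman space~$R_\omega^p$ in~\cite{MR4839586}, replacing the finite-dimensional $L^p_n$~factorizations used there with the quantitative factorization results for arbitrary Haar system Hardy spaces from~\cite{MR4884827}, and verifying that the weak nullity of the Rademacher sequence in~$Y$ is exactly what is needed to carry out the infinite-dimensional stabilization. The heart of the matter is the factorization property of the canonical basis; the primary factorization property will then be deduced from it by a diagonalization argument, and the final assertion that $\mathcal{M}_{Y_\omega}$ is the largest ideal is immediate from the equivalence recorded in~\cite[Proposition~5.1]{MR2586976}.

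To establish the factorization property, fix a bounded operator~$T$ on~$Y_\omega$ with large diagonal, so that $\delta := \inf_{(n,I)} |\langle (h_I^n)^*, T h_I^n\rangle| > 0$, and aim to produce bounded operators~$A,B$ with $ATB = I_{Y_\omega}$. I would proceed block by block: for each level~$n$, consider the compression of~$T$ to the finite-dimensional copy of~$Y_n$ sitting inside~$Y_\omega$. Applying the finite-dimensional probabilistic factorization of~\cite{MR4884827} yields randomized block bases --- random $\pm 1$~combinations of the~$h_I^n$ --- that reproduce the Haar structure of a smaller space~$Y_m$ up to small distortion, and through which the identity on~$Y_m$ factors with norms controlled solely in terms of~$\delta$ and~$\|T\|$. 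The randomization is what drives the off-diagonal entries of~$T$ within a block to be small on average, precisely as in the original finite-dimensional estimates of Lechner~\cite{MR3990955,MR3910428}.

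The main obstacle --- and the step where the hypothesis on the Rademacher functions enters --- is controlling the interaction of~$T$ between distinct blocks and then assembling the local factorizations into global operators. Because the blocks are independent distributional copies, the randomized block bases at level~$n$ behave like Rademacher-type averages; the assumption that the standard Rademacher sequence is weakly null in~$Y$ forces these block-averages to be weakly null in~$Y_\omega$, so that the components of their $T$-images along any fixed finite collection of blocks become asymptotically negligible. A diagonal extraction then provides a subsequence of levels along which the inter-block errors are summably small. I would glue the per-block factors~$A_n,B_n$ into bounded operators~$A$ and~$B$: here~$B$ embeds a standard copy of~$Y_\omega$ onto the selected randomized block bases, $T$~acts, and~$A$ compresses back and reconstructs. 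The uniform per-block norm bounds together with the summable off-block errors guarantee that $ATB$ differs from the identity by an operator of norm strictly less than~$1$, hence $ATB$ is invertible and the identity factors through~$T$ after absorbing $(ATB)^{-1}$ into~$A$.

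For the primary factorization property, given an arbitrary bounded~$T$ I would split the diagonal: since $\langle (h_I^n)^*, T h_I^n\rangle + \langle (h_I^n)^*, (I_{Y_\omega}-T)h_I^n\rangle = 1$ for every basis vector, a Ramsey/pigeonhole argument over the independent blocks produces a cofinal subfamily of levels on which one of~$T$ or~$I_{Y_\omega}-T$ has large diagonal and whose closed span~$Z$ is again isomorphic to, and complemented in, $Y_\omega$ by the self-similarity of the independent-sum construction from~\Cref{sec:independent-sums}. Since the basis projection onto~$Z$ preserves the diagonal entries, the compression of the chosen operator to~$Z$ has large diagonal; applying the factorization property on $Z\cong Y_\omega$ and composing with the complementation then factors~$I_{Y_\omega}$ through~$T$ or through~$I_{Y_\omega}-T$. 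The sharpest technical difficulty throughout is the quantitative matching of the finite-dimensional norm bounds with the infinite-dimensional weak-null stabilization, which is exactly what the combination of~\cite{MR4884827} and the Rademacher hypothesis is designed to overcome.
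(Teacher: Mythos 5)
Your high-level strategy (per-block finite-dimensional results from~\cite{MR4884827}, weak-null control of inter-block interactions, gluing into global factors) is indeed the paper's strategy, but two steps that you treat as routine are precisely where the substance lies, and as written both fail.

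First, the gluing. You assert that uniform per-block norm bounds on the factors $A_n,B_n$, together with summably small off-block errors, yield bounded global operators $A,B$ with $\|ATB-I_{Y_\omega}\|<1$. In a general Haar system Hardy space this is false as a matter of principle: the paper points out at the start of \Cref{sec:embeddings-projections} that, unlike for $\ell^p$- or $c_0$-sums, a sequence of uniformly bounded operators on the components $H_\omega^n$ need \emph{not} induce a bounded operator on~$Y_\omega$ (the decomposition $(H_\omega^n)_n$ is only suppression-unconditional, and the norm couples the blocks). The summable off-block errors control $\|ATB - D\|$ only \emph{after} one knows $\|A\|,\|B\|<\infty$. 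Establishing that boundedness is the paper's main technical work: one needs the selected block vectors to form a faithful (or almost faithful) Haar system in~$Y_\omega$, so that $\hat B$ is an isometry by distributional equality and $\hat B\hat A$ is dominated by a conditional expectation adapted to the independent blocks (\Cref{lem:cond-expect-sum}, \Cref{thm:operators-A-B}, \Cref{thm:operators-A-B-almost-faithful}). Your proposal invokes no substitute for this.

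Second, your deduction of the primary factorization property from the factorization property does not work. Pigeonhole on which of the diagonal entries of $T$ or $I_{Y_\omega}-T$ is $\ge 1/2$ selects an infinite set of pairs $(n,I)$, but this set has no reason to contain, for infinitely many~$m$, a faithfully structured copy of the full finite tree $\mathcal{D}_{\le m}$; it could meet each block in a scattered or very thin subset, in which case its closed span is not isomorphic to~$Y_\omega$, let alone complemented. And if you try to repair this by building block bases (a Gamlen--Gaudet selection) inside the chosen color class, the diagonal entries of $T$ with respect to the new vectors acquire off-diagonal contributions $\langle h_K^n, Th_{K'}^n\rangle$, so largeness is not preserved unless you have already diagonalized $T$ --- at which point you are reconstructing the paper's argument rather than shortcutting it. The paper avoids this entirely by reversing your order of deduction: it first proves the scalar reduction (\Cref{thm:constant-multiple-of-identity}), that some $cI_{Y_\omega}$ with $c\in\overline{\mathcal{A}(\operatorname{diag}(T))}$ \emph{projectionally} factors through $T$ with constant~$1$ and small error; the primary factorization property then follows from the dichotomy $c\ge 1/2$ versus $1-c>1/2$, where the case of $I_{Y_\omega}-T$ is handled by the projectionality trick of \Cref{rem:factorization-identity-minus-T} (since $AB=I$, one gets $A(I-T)B = I - ATB \approx (1-c)I$), with no subspace selection at all. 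The factorization property is then obtained afterwards, via the separate reduction of a mixed-sign large diagonal to a large \emph{positive} diagonal (\Cref{pro:positive-diagonal}, \Cref{thm:fact-property}) --- a sign issue your sketch also leaves unaddressed, since the stabilized values produced by the probabilistic diagonalization are \emph{averages} of diagonal entries and can be near zero when the signs are mixed.
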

We will prove the above statements in \Cref{thm:primary-and-pos-fact-prop} and \Cref{thm:fact-property}, where we also provide explicit factorization constants, i.e., upper bounds for the norms of the involved factors~$A$ and~$B$.
The condition involving Rademacher functions in \Cref{thm:main-result} only excludes the case $\|\cdot \|_Y\sim \|\cdot \|_{L^{\infty}}$ (see, e.g., \cite[Section~3.2]{speckhofer:2025}), where~$Y$ can be identified with~$C(\Delta)$, the space of continuous functions on the Cantor set.

The next section contains all necessary definitions and preliminary results on the space~$Y$. In \Cref{sec:independent-sums}, we introduce the space~$Y_\omega$ and prove some of its fundamental properties. In the remaining sections, we describe the various reduction steps which are needed to reduce an arbitrary operator on~$Y_{\omega}$ to a constant multiple of the identity, thus enabling us to prove our main result, \Cref{thm:main-result}.

\section{Preliminaries}
\label{sec:preliminaries}

In this section, we recall the notion of a \emph{Haar system Hardy space}, which was introduced in~\cite{MR4816119,MR4894823}, and we collect some fundamental properties of these spaces. Moreover, we discuss the notion of a \emph{faithful Haar system}, which will be used throughout the paper, and we introduce some terminology for describing factorizations and diagonals of operators.

\subsection{Notation and basic definitions}
\label{sec:notation}
We start by introducing some basic notation for dyadic intervals and the Haar system.
Let $\mathbb{N}_0 = \mathbb{N}\cup \{ 0 \}$, and let $\mathcal{D}$ denote the collection of all dyadic intervals in~$[0,1)$, i.e.,
\begin{equation*}
  \mathcal{D}
  = \Big\{\Big[\frac{i}{2^n},\frac{i+1}{2^n}\Big) :
  n\in\mathbb{N}_0,\ 0\leq i < 2^n\Big\}.
\end{equation*}
In addition, for each $n\in\mathbb{N}_0$, define
\begin{equation*}
  \mathcal{D}_n
  = \{I\in\mathcal{D} : |I| = 2^{-n}\} = \Big\{ \Big[ \frac{i}{2^{n}}, \frac{i+1}{2^n} \Big):\; 0\leq i < 2^n \Big\}
  \end{equation*}
  and
\begin{equation*}
  \mathcal{D}_{\le n}
  = \bigcup_{k=0}^n \mathcal{D}_k,
  \quad
  \mathcal{D}_{<n} = \bigcup_{k=0}^{n-1}\mathcal{D}_k,
\end{equation*}
as well as
\begin{equation*}
  \mathcal{D}_{\omega} = \bigl\{ (n,I) : n\in \mathbb{N}_0,\, I\in \mathcal{D}_{\le n} \bigr\}.
\end{equation*}
The set $\mathcal{D}_{\omega}$ will be used to enumerate the basis of $Y_{\omega}$ (see \Cref{dfn:independent-sum}). It can be visualized as disjoint copies of the sets $\mathcal{D}_{\leq n}$, $n \in \mathbb{N}_{0}$, which form finite dyadic trees. This is illustrated in the following diagram for $n = 0,1,2$:
\begin{center}
\begin{tikzpicture}[
  level distance=1.2cm,
  sibling distance=2.5cm,
  every node/.style={circle,fill,inner sep=0pt,minimum size=2mm},
  edge from parent/.style={draw,thick},
  level 1/.style={sibling distance=3cm},
  level 2/.style={sibling distance=2cm},
  ]

\node[label={[xshift=4pt]right:$0,[0,1)$}] (A) at (-4,0) {};

\node[label={[xshift=4pt]right:$1,[0,1)$}] (B) at (0,0) {}
  child {node[label={[yshift=12pt]below:$1,[0,\frac12)$}] {}}
  child {node[label={[yshift=12pt]below:$1,[\frac12,1)$}] {}};

\node[label={[xshift=4pt]right:$2,[0,1)$}] (C) at (6,0) {}
  child {node[label={[xshift=4pt]right:$2,[0,\frac12)$}] {}
    child {node[label={[xshift=-6pt,yshift=12pt]below:$2,[0,\frac14)$}] {}}
    child {node[label={[xshift=-12pt,yshift=12pt]below:$2,[\frac14,\frac12)$}] {}}
  }
  child {node[label={[xshift=4pt]right:$2,[\frac12,1)$}] {}
    child {node[label={[xshift=12pt,yshift=12pt]below:$2,[\frac12,\frac34)$}] {}}
    child {node[label={[xshift=6pt,yshift=12pt]below:$2,[\frac34,1)$}] {}}
  };
\end{tikzpicture}
\end{center}

If $I\in \mathcal{D}$ is a dyadic interval, then we denote by~$I^+$ the left half of~$I$ and by~$I^-$ the right half of~$I$ (both are elements of~$\mathcal{D}$). If we use the symbol~$\pm$ multiple times in
an equation, we mean either always~$+$ or always~$-$. For any
subcollection $\mathcal{B}\subset\mathcal{D}$, we put $\mathcal{B}^* = \bigcup_{I\in \mathcal{B}}I$.

Next, we define the bijective function $\iota\colon\mathcal{D}\to\mathbb{N}$ by
\begin{equation*}
  \Big[\frac{i}{2^n},\frac{i+1}{2^n}\Big)
  \overset{\iota}{\mapsto} 2^n + i.
\end{equation*}
We also define a linear order on~$\mathcal{D}_{\omega}$ by setting $(n,I) \prec (m,J)$ if and only if $n < m$ or ($n = m$ and $\iota(I) < \iota(J)$). Then $(\mathcal{D}_{\omega},\prec)$ is order isomorphic to~$(\mathbb{N},<)$. Whenever we write an infinite series $\sum_{(n,I)\in \mathcal{D}_{\omega}}$, we mean that the order of summation is determined by this order isomorphism.

The Haar system $(h_I)_{I\in\mathcal{D}}$ is defined as
\begin{equation*}
  h_I
  = \chi_{I^+} - \chi_{I^-},
  \qquad I\in\mathcal{D},
\end{equation*}
where $\chi_A$ denotes the characteristic function of a subset $A\subset [0,1)$.  We additionally
put $h_\varnothing = \chi_{[0,1)}$ and $\mathcal{D}^+ = \mathcal{D}\cup\{\varnothing\}$ as well as
$\iota(\varnothing) = 0$.  Recall that in the linear order induced by $\iota$, the Haar system
$(h_I)_{I\in \mathcal{D}^+}$, is a monotone Schauder basis
of $L^p$, $1\le p<\infty$ (and unconditional if $1<p<\infty$).
For $x = \sum_{I\in \mathcal{D}^+} a_Ih_I\in L^1$, by the \emph{Haar support} of~$x$ we mean the
set of all $I\in \mathcal{D}^+$ such that $a_I\ne 0$. Moreover, we write $\operatorname{supp} x = \{ t\in [0,1) : x(t) \ne 0 \}$, and for $\alpha\in \mathbb{R}$, we use the notation $[x\in \alpha] = \{ t\in [0,1) : x(t) = \alpha \}$. We only consider Banach spaces over the real numbers.

\subsection{Haar system Hardy spaces}
\label{sec:haar-system-hardy-spaces}

In this subsection we recall the definition of a Haar system Hardy space. To this end, we first need the notion of a \emph{Haar system space}, which was introduced in~\cite[Section~2.4]{MR4430957}.
\begin{dfn}\label{dfn:HS-1d}
  A \emph{Haar system space $X$} is the completion of
  $H := \spn \{ h_I \}_{I\in\mathcal{D}^+} = \spn\{\chi_I\}_{I\in\mathcal{D}}$ under a norm
  $\|\cdot\|_X$ that satisfies the following properties:
  \begin{enumerate}[(i)]
    \item\label{dfn:HS-1d:1} If $f$, $g$ are in $H$ and $|f|$, $|g|$ have the same distribution,
          then $\|f\|_X = \|g\|_X$.
    \item\label{dfn:HS-1d:2} $\|\chi_{[0,1)}\|_X = 1$.
  \end{enumerate}
  We denote the class of Haar system spaces by $\mathcal{H}(\delta)$.  Given
  $X\in\mathcal{H}(\delta)$, we define the closed subspace $X_0\subset X$ as the closure of
  $H_0 := \spn\{ h_I \}_{I\in \mathcal{D}}$ in $X$.  We denote the class of these subspaces by
  $\mathcal{H}_0(\delta)$.
\end{dfn}
Apart from the spaces $L^p$, $1\le p<\infty$, and the closure of $H$ in $L^{\infty}$, the class
$\mathcal{H}(\delta)$ includes all rearrangement-invariant function spaces~$X$ on $[0,1)$ (e.g.,
Orlicz function spaces) in which the linear span~$H$ of the Haar system $(h_I)_{I\in \mathcal{D}^+}$ is
dense. If $H$ is not dense in a given rearrangement-invariant function space~$X$, then its closure in~$X$ is a Haar system space.

The following proposition contains some basic results on Haar system spaces which will be used frequently throughout the paper. For proofs, we refer to~\cite[Section~2.4]{MR4430957} and~\cite{MR4894823} (see also~\cite[Chapter~2]{speckhofer:2025} for a comprehensive exposition containing all results and proofs).

\begin{pro}\label{pro:HS-1d}
  Let $X\in\mathcal{H}(\delta)$ be a Haar system space. Then the following hold:
  \begin{enumerate}[(i)]
    \item\label{pro:HS-1d:i} For every $f\in H = \spn\{h_I\}_{\in\mathcal{D}^+}$, we have
          $\|f\|_{L^1}\leq \|f\|_X\leq \|f\|_{L^\infty}$.
    \item\label{pro:HS-1d:v} For all $f,g\in H$ with $|f|\leq |g|$, we have $\|f\|_X \leq \|g\|_X$.
    \item\label{pro:HS-1d:ii} The Haar system $(h_I)_{I\in \mathcal{D}^+}$, in the usual linear
          order, is a monotone Schauder basis of~$X$.
    \item \label{pro:HS-1d:iii} $H$ naturally coincides with a subspace of $X^{*}$ (where $f\in H$ acts on $g\in H$ by $\langle f,g \rangle = \int fg$), and its closure in $X^{*}$ is also a Haar system space.
    \item For every $I\in \mathcal{D}$, we have $\|h_I\|_X\|h_I\|_{X^{*}} = |I|$.
  \end{enumerate}
\end{pro}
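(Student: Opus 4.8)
The plan is to establish the identity by proving the two inequalities separately, the reverse one being the substantive part. Writing $t = |I|$ and noting that $|h_I| = \chi_I$, so that $\|h_I\|_X = \|\chi_I\|_X$, the claim is precisely the duality relation $\phi_X(t)\,\phi_{X^{*}}(t) = t$ between the fundamental functions of $X$ and $X^{*}$, specialized to dyadic $t$.

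For the lower bound $\|h_I\|_X\|h_I\|_{X^{*}}\ge |I|$ I would test the functional $h_I\in X^{*}$ (available by \Cref{pro:HS-1d}) against the vector $h_I\in X$ itself: since $\langle h_I,h_I\rangle=\int h_I^2=\int\chi_I=|I|$, the defining inequality of the dual norm yields $|I|=|\langle h_I,h_I\rangle|\le \|h_I\|_{X^{*}}\|h_I\|_X$. This step is immediate and uses no rearrangement invariance.

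The reverse inequality $\|h_I\|_{X^{*}}\le |I|/\|h_I\|_X$ is the heart of the matter, and I would obtain it by a symmetrization argument built on the rearrangement invariance of $\|\cdot\|_X$ (\Cref{dfn:HS-1d}). Fix a test function $g\in H$ with $\|g\|_X\le 1$; since every element of $H$ is constant on the dyadic intervals of some level, we may assume $g$ is constant on each $J\in\mathcal D_m$ for some $m$ large enough that both $I^{+}$ and $I^{-}$ are unions of level-$m$ dyadic intervals. The goal is to replace $g$ by a scalar multiple of $h_I$ of no larger norm that reproduces the pairing $\int h_I g$. First I would average $g$ over the finite group of measure-preserving bijections of $[0,1)$ that permute the level-$m$ subintervals of $I^{+}$ among themselves and those of $I^{-}$ among themselves, while fixing $[0,1)\setminus I$ pointwise. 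Each such $\sigma$ maps $\spn\{\chi_J:J\in\mathcal D_m\}$ into itself and satisfies $h_I\circ\sigma=h_I$, so $\int (g\circ\sigma)h_I=\int g\,h_I$, while $\|g\circ\sigma\|_X=\|g\|_X$ by rearrangement invariance; hence the average $\bar g$ satisfies $\int\bar g\,h_I=\int g\,h_I$ and, by the triangle inequality, $\|\bar g\|_X\le\|g\|_X$. By construction $\bar g$ is constant on $I^{+}$ and on $I^{-}$ and coincides with $g$ off $I$. A second symmetrization using the swap $\rho$ of $I^{+}$ and $I^{-}$ (which fixes the complement of $I$ and satisfies $h_I\circ\rho=-h_I$), namely $\tilde g=\tfrac12(\bar g-\bar g\circ\rho)$, then collapses $\bar g$ to $\tilde g=c\,h_I$ for a scalar $c$, still with $\int\tilde g\,h_I=\int g\,h_I$ and $\|\tilde g\|_X\le 1$. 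Since $\|\tilde g\|_X=|c|\,\|h_I\|_X$, I get $|c|\le 1/\|h_I\|_X$, whence $\int g\,h_I=c\int h_I^2=c\,|I|\le |I|/\|h_I\|_X$. Taking the supremum over all admissible $g$ gives the bound.

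The main obstacle is the careful setup of the two symmetrizations: one must check that the averaging maps stay within $H$ (so that rearrangement invariance, which is postulated only for functions in $H$, actually applies), that they preserve or flip $h_I$ exactly as claimed, and that the resulting $\tilde g$ is genuinely proportional to $h_I$ with the pairing unchanged. Once these bookkeeping points are settled, the norm estimate and the passage to the supremum are routine, and combining the two inequalities yields $\|h_I\|_X\|h_I\|_{X^{*}}=|I|$.
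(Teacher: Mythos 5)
Your argument for the duality identity in item (v) is correct: the lower bound via testing $h_I$ against itself is immediate, and your two-stage symmetrization (averaging over interval permutations within $I^{+}$ and within $I^{-}$, then antisymmetrizing with the swap of $I^{+}$ and $I^{-}$) is exactly the standard way of producing the norm-one map $g \mapsto |I|^{-1}\langle h_I, g\rangle\, h_I$ that underlies the proof in the sources the paper defers to (\cite{MR4430957}, Section~2.4, and \cite{speckhofer:2025}, Chapter~2) --- the paper itself gives no proof, only these citations. The only caveat is one of scope: your proposal treats item (v) alone, taking (i)--(iv) as given, so as a proof of the full proposition it is incomplete, though the part you do prove is sound and matches the standard approach.
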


We will also need the following lemma, which states that certain conditional expectations are bounded in every Haar system space. A proof can be found in~\cite[Lemma~4.3]{MR4894823}.
\begin{lem} \label{lem:cond-expect} Let $X\in\mathcal{H}(\delta)$ and let $\mathcal{F}$ be a $\sigma$-algebra generated by a partition $\{A_{i}  \}_{i=1}^{n}$ of~$[0,1)$, where for every $1 \leq i \leq n$, the set $A_i$ is a finite union of dyadic intervals. Then
\begin{align*}
\| \mathbb{E}^{\mathcal{F}}x \|_{X} \leq \| x \|_{X},\qquad x \in H.
\end{align*}
\end{lem}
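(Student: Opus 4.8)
The plan is to realize the conditional expectation $\mathbb{E}^{\mathcal{F}}$ as an average of measure-preserving rearrangements and then invoke the rearrangement invariance built into property~(i) of \Cref{dfn:HS-1d}. First I would reduce everything to a single fine dyadic scale. Since $x\in H$ and each $A_i$ is a finite union of dyadic intervals, there is an $N\in\mathbb{N}_0$ so large that $x$ is constant on every $I\in\mathcal{D}_N$ and each $A_i$ is a union of intervals from~$\mathcal{D}_N$; write $m_i = |A_i|\cdot 2^N$ for the number of such intervals inside $A_i$. In this setting the conditional expectation is the concrete averaging operator
\[
  \mathbb{E}^{\mathcal{F}} x
  = \sum_{i=1}^n \Bigl( \frac{1}{|A_i|} \int_{A_i} x \Bigr) \chi_{A_i},
\]
which replaces, within each block $A_i$, all the values of $x$ by their common average over $A_i$.

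Next I would introduce the finite group $G = \prod_{i=1}^n \mathrm{Sym}(A_i)$, where $\mathrm{Sym}(A_i)$ permutes the $m_i$ intervals of $\mathcal{D}_N$ contained in $A_i$. Each $\pi\in G$ induces a measure-preserving bijection of $[0,1)$ (affinely identifying equal-length dyadic intervals) that maps $\mathcal{D}_N$ to itself and fixes each $A_i$ setwise. The key computation is that averaging $x\circ\pi$ over $G$ reproduces the block averages: for a point lying in a fixed $J\in\mathcal{D}_N$ with $J\subseteq A_i$, as $\pi$ ranges over $G$ the image $\pi(J)$ runs uniformly over the $m_i$ intervals of $\mathcal{D}_N$ inside $A_i$, so that
\[
  \frac{1}{|G|} \sum_{\pi\in G} x\circ\pi = \mathbb{E}^{\mathcal{F}} x.
\]
Because each $\pi$ permutes equal-length dyadic intervals, $x\circ\pi$ again lies in $H$ (it is constant on every interval of $\mathcal{D}_N$), and $|x\circ\pi|$ has the same distribution as $|x|$; hence property~(i) of \Cref{dfn:HS-1d} gives $\|x\circ\pi\|_X = \|x\|_X$ for every $\pi\in G$.

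Finally, the triangle inequality yields
\[
  \| \mathbb{E}^{\mathcal{F}} x \|_X
  = \Bigl\| \frac{1}{|G|} \sum_{\pi\in G} x\circ\pi \Bigr\|_X
  \le \frac{1}{|G|} \sum_{\pi\in G} \| x\circ\pi \|_X
  = \| x \|_X,
\]
which is the assertion. I do not expect a genuine computational obstacle: the entire content is the identification of $\mathbb{E}^{\mathcal{F}}$ with the group average $\frac{1}{|G|}\sum_{\pi\in G} x\circ\pi$, after which rearrangement invariance does all the work. The one subtle point is the reduction to a common scale $\mathcal{D}_N$, which is exactly where the hypothesis that each $A_i$ is a finite union of dyadic intervals (rather than an arbitrary measurable set) enters, guaranteeing that the permutations preserve $H$ and fix the distribution of $|x|$. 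Conceptually this is just the observation that $\mathbb{E}^{\mathcal{F}}$ is a doubly stochastic operator, for which contractivity on rearrangement-invariant spaces is classical (Calder\'on--Mityagin); the averaging argument above is the self-contained dyadic incarnation of that fact.
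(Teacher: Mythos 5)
Your proof is correct, and it is essentially the same argument as the one the paper relies on: the paper does not reprove this lemma but cites \cite[Lemma~4.3]{MR4894823}, where the proof proceeds exactly as you describe—pass to a common dyadic scale $\mathcal{D}_N$, write $\mathbb{E}^{\mathcal{F}}$ as a uniform average of compositions with measure-preserving permutations of the $\mathcal{D}_N$-intervals inside each $A_i$, and conclude by rearrangement invariance (property~(i) of \Cref{dfn:HS-1d}) together with the triangle inequality. Your identification of the group average with the conditional expectation, and the observation that each $x\circ\pi$ stays in $H$ with $|x\circ\pi|$ equidistributed with $|x|$, are precisely the points that make this work.
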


Next, by $(r_n)_{n=0}^{\infty}$, we denote the sequence of standard Rademacher functions, i.e.,
\begin{equation*}
  r_n = \sum_{I\in \mathcal{D}_n} h_I,\qquad n\in \mathbb{N}_0.
\end{equation*}
Using the sequence of Rademacher functions, we define the set
\begin{equation*}
  \mathcal{R} = \{ (r_{\iota(I)})_{I\in \mathcal{D}^+}, (r_0)_{I\in \mathcal{D}^+} \}.
\end{equation*}
Hence, if $\mathbf{r} = (r_I)_{I\in \mathcal{D}^+}\in \mathcal{R}$, then $\mathbf{r}$ is either an
independent sequence of $\pm 1$-valued random variables or a constant
sequence, indexed by dyadic intervals.

We can now give the definition of a Haar system Hardy space (cf.~\cite{MR4816119,MR4894823}).
\begin{dfn}\label{dfn:haar-system-hardy-space}
  For $X\in\mathcal{H}(\delta)$ and
  $\mathbf{r} = (r_I)_{I\in\mathcal{D}^+}\in\mathcal{R}$, we define the \emph{(one-parameter) Haar
    system Hardy space $X(\mathbf{r})$} as the completion of
  $H = \spn\{ h_I \}_{I\in \mathcal{D}^+}$ under the norm $\|\cdot\|_{X(\mathbf{r})}$ given by
  \begin{equation*}
    \Bigl\| \sum_{I\in\mathcal{D}^+} a_I h_I\Bigr\|_{X(\mathbf{r})}
    = \Bigl\|
    s\mapsto \int_0^1 \Bigl|
    \sum_{I\in\mathcal{D}^+} r_I(u) a_I h_I(s)
    \Bigr| \dif u
    \Bigr\|_X.
  \end{equation*}
  We denote the class of one-parameter Haar system Hardy spaces by $\mathcal{HH}(\delta)$.
  Moreover, given $X(\mathbf{r})\in\mathcal{HH}(\delta)$, we define the subspace
  $X_0(\mathbf{r})$ as the closure of $H_0 = \operatorname{span}\{ h_I \}_{I\in \mathcal{D}}$ in~$X(\mathbf{r})$.
  We denote the class of these subspaces by $\mathcal{H}\mathcal{H}_0(\delta)$.
  For notational convenience, we will also refer to the subspaces $X_0(\mathbf{r})$ as Haar system Hardy spaces, and we will usually denote them by~$Y$, suppressing the parameters~$X$ and~$\mathbf{r}$.
  We will also work with the finite-dimensional subspaces $Y_n := \operatorname{span}\{ h_I \}_{I\in \mathcal{D}_{\le n}}$, $n\in \mathbb{N}_0$, of any Haar system Hardy space~$Y$.
\end{dfn}
Note that if $r_I = r_0$ for all $I\in\mathcal{D}^+$, then we have $\|\cdot \|_{X(\mathbf{r})} = \|\cdot \|_X$ and thus $X(\mathbf{r}) = X$, so the class~$\mathcal{HH}(\delta)$ is an extension of $\mathcal{H}(\delta)$. Moreover, if $\mathbf{r}$ is independent, then by Khintchine's inequality, the norm on $X(\mathbf{r})$ can be expressed in terms of the square function:
\begin{equation*}
  \Bigl\| \sum_{I\in \mathcal{D}^+}a_Ih_I \Bigr\|_{X(\mathbf{r})} \sim \Bigl\| \Bigl( \sum_{I\in \mathcal{D}^+} a_I^2h_I^2 \Bigr)^{1/2} \Bigr\|_X.
\end{equation*}
Hence, for $X = L^1$ and an independent Rademacher sequence~$\mathbf{r}$, we obtain the dyadic Hardy space~$H^1$, equipped with an equivalent norm.

In the next proposition, we collect some basic properties of Haar system Hardy spaces.
For proofs, we again refer to~\cite{MR4894823} and~\cite[Chapter~2]{speckhofer:2025}.

\begin{pro}\label{pro:HSHS-1d}
  Let $X(\mathbf{r})\in\mathcal{HH}(\delta)$, and put $Y = X_0(\mathbf{r})$. Then the following hold:
  \begin{enumerate}[(i)]
    \item\label{pro:HSHS-1d:i} The Haar system $(h_I)_{I\in \mathcal{D}}$, in the usual linear
          order, is a monotone Schauder basis of~$Y$, and it is $1$-unconditional if $\mathbf{r}$ is independent.
    \item $H_0 = \operatorname{span}\{ h_I \}_{I\in \mathcal{D}}$ naturally coincides with a subspace of $Y^{*}$, and if $\mathbf{r}$ is independent, then $(h_I)_{I\in \mathcal{D}}$ is $1$-unconditional in $Y^{*}$.
    \item\label{pro:HSHS-1d:iii} For every $I\in \mathcal{D}$, we have $\|h_I\|_Y\|h_I\|_{Y^{*}} = |I|$. In particular, we have $\|h_I\|_Y = \|h_I\|_X$ and $\|h_I\|_{Y^{*}} = \|h_I\|_{X^{*}}$.
    \item\label{pro:HSHS-1d:disjointly-supp} If $f$ is a finite linear combination of disjointly supported Haar functions, then we have $\|f\|_{X(\mathbf{r})} = \|f\|_X$.
    \item\label{pro:HSHS-1d:single-layer} If $x = \sum_{I\in \mathcal{D}} a_Ih_I\in H_0$, then for every $k\in \mathbb{N}_0$, we have $\|\sum_{I\in \mathcal{D}_k}a_Ih_I\|_Y\le \|x\|_Y$. Moreover, $\|a_Ih_I\|_Y\le \|x\|_Y$ for all $I\in \mathcal{D}$.
    \item\label{pro:HSHS-1d:v} If $\mathcal{B}$ is a finite collection of pairwise disjoint dyadic intervals and $(\theta_K)_{K\in \mathcal{B}}\in \{ \pm 1 \}^{\mathcal{B}}$, then $\|\sum_{K\in \mathcal{B}}\theta_Kh_K\|_{Y^{*}}\le 1$.
  \end{enumerate}
\end{pro}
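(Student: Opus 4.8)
The plan is to treat the two regimes of~$\mathbf r$ in parallel: either $\mathbf r$ is constant, in which case $X(\mathbf r) = X$ and $\|\cdot\|_Y$ is the restriction of the rearrangement-invariant norm $\|\cdot\|_X$ to mean-zero functions, or $\mathbf r$ is independent, in which case $\|\cdot\|_{X(\mathbf r)}$ is invariant under coordinatewise sign changes of the Haar coefficients. I would prove the parts in the order \ref{pro:HSHS-1d:i}, \ref{pro:HSHS-1d:disjointly-supp}, \ref{pro:HSHS-1d:single-layer}, \ref{pro:HSHS-1d:v}, \ref{pro:HSHS-1d:iii}, (ii), since the later items depend on the earlier ones.

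For part~\ref{pro:HSHS-1d:i}, the crucial observation is that every initial-segment projection $P_S$ of the basis $(h_I)_{I\in\mathcal D}$ (in the $\iota$-order) coincides, on $H_0$, with a conditional expectation $\mathbb E^{\mathcal F}$, where $\mathcal F$ is generated by a partition of $[0,1)$ into dyadic intervals (those $I\in\mathcal D_n$ below the cut are split into $I^{\pm}$, the rest are left whole). To transfer the contractivity of $\mathbb E^{\mathcal F}$ from $X$ (\Cref{lem:cond-expect}) to $X(\mathbf r)$, write $G_u(s) = \sum_I r_I(u)a_I h_I(s)$, so that $P_S$ acts coefficientwise as $\mathbb E^{\mathcal F}$ in the $s$ variable; averaging the pointwise bound $|\mathbb E^{\mathcal F}G_u(s)|\le \mathbb E^{\mathcal F}|G_u|(s)$ over $u$ and applying Fubini yields $\int_0^1|\mathbb E^{\mathcal F}G_u(s)|\,du \le \mathbb E^{\mathcal F}\Phi(s)$, where $\Phi(s) = \int_0^1|G_u(s)|\,du$ is exactly the function whose $X$-norm is $\|x\|_{X(\mathbf r)}$. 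Then \Cref{pro:HS-1d}~\ref{pro:HS-1d:v} and \Cref{lem:cond-expect} give $\|P_S x\|_{X(\mathbf r)}\le \|\mathbb E^{\mathcal F}\Phi\|_X\le \|\Phi\|_X = \|x\|_{X(\mathbf r)}$; this is monotonicity, and the Schauder basis property then follows from density of~$H_0$. For the independent case, $1$-unconditionality is immediate: flipping the signs of the~$a_I$ does not change the distribution of $(r_I(u))_I$, hence leaves the inner integral in the definition of $\|\cdot\|_{X(\mathbf r)}$ pointwise unchanged.

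Part~\ref{pro:HSHS-1d:disjointly-supp} is a direct pointwise computation: if the Haar functions are disjointly supported, then for each fixed~$u$ at most one summand of $\sum_I r_I(u)a_I h_I(s)$ is nonzero at a given~$s$, so the integrand equals $|f(s)|$ independently of~$u$, and rearrangement invariance gives $\|f\|_{X(\mathbf r)} = \||f|\|_X = \|f\|_X$. For parts~\ref{pro:HSHS-1d:single-layer} and~\ref{pro:HSHS-1d:v} I would split into the two regimes. When $\mathbf r$ is independent, $1$-unconditionality makes every coordinate projection (in particular onto a single level $\mathcal D_k$ or a single~$h_I$) a contraction. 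When $\mathbf r$ is constant, I would use a measure-preserving block swap: for an antichain $\mathcal B$ of dyadic intervals, let $\tau$ exchange $K^+$ and $K^-$ for each $K\in\mathcal B$, and let $\mathcal F$ have atoms $\{K^{\pm}\}_{K\in\mathcal B}$ together with the complement; then $\sum_{K\in\mathcal B}a_K h_K = \tfrac12\bigl(\mathbb E^{\mathcal F}g - (\mathbb E^{\mathcal F}g)\circ\tau\bigr)$, and since $\tau$ is measure-preserving, rearrangement invariance together with \Cref{lem:cond-expect} bounds the right-hand side in $X$-norm by $\|g\|_X$. Taking $\mathcal B = \mathcal D_k$ and $\mathcal B = \{I\}$ gives part~\ref{pro:HSHS-1d:single-layer}. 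For part~\ref{pro:HSHS-1d:v}, note $|\psi|\le 1$ for $\psi = \sum_K\theta_K h_K$, so $\|\psi\|_{X^*}\le\|\psi\|_{L^\infty}\le 1$ by \Cref{pro:HS-1d}~\ref{pro:HS-1d:i}; pairing with~$g$ and factoring through the coordinate projection $P_{\mathcal B}$ (a contraction by the previous step or by unconditionality) together with part~\ref{pro:HSHS-1d:disjointly-supp} yields $|\langle \psi, g\rangle| = |\langle\psi, P_{\mathcal B}g\rangle|\le \|P_{\mathcal B}g\|_X = \|P_{\mathcal B}g\|_{X(\mathbf r)}\le\|g\|_{X(\mathbf r)}$.

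Finally, for parts~\ref{pro:HSHS-1d:iii} and~(ii): the identity $\|h_I\|_Y = \|h_I\|_X$ follows by computing $\|h_I\|_{X(\mathbf r)} = \||h_I|\|_X = \|\chi_I\|_X$ directly, using $|r_I(u)| = 1$. The product $\|h_I\|_Y\|h_I\|_{Y^*} = |I|$ comes from $\langle h_I, h_I\rangle = |I|$ (lower bound) and the single-function bound of part~\ref{pro:HSHS-1d:single-layer}, which forces $|a_I|\,\|h_I\|_Y\le \|g\|_Y$ for the $h_I$-coefficient $a_I$ of any~$g$, and hence $\|h_I\|_{Y^*}\le |I|/\|h_I\|_Y$ (upper bound); dividing by the corresponding identity in~$X$ (the last item of \Cref{pro:HS-1d}) gives $\|h_I\|_{Y^*} = \|h_I\|_{X^*}$. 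Part~(ii) follows from basis duality: the biorthogonal functionals of $(h_I)$ are $|I|^{-1}h_I$ under the integral pairing, so $H_0$ embeds into~$Y^*$, and the dual of a $1$-unconditional basis is $1$-unconditional. I expect the main obstacle to be the transfer of monotonicity through the randomization in part~\ref{pro:HSHS-1d:i} (the interchange of conditional expectation, absolute value, and the average over~$u$) and the case split in parts~\ref{pro:HSHS-1d:single-layer}--\ref{pro:HSHS-1d:v}, where the constant, non-unconditional regime requires the symmetry argument rather than a coordinate projection.
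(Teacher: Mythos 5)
Your proposal is correct. The paper gives no in-text proof of this proposition (it defers to \cite{MR4894823} and \cite[Chapter~2]{speckhofer:2025}), but your argument --- partial-sum projections realized as conditional expectations, Jensen's inequality and Fubini under the randomization integral combined with \Cref{lem:cond-expect} and \Cref{pro:HS-1d}, distributional invariance for the disjoint-support and constant-$\mathbf{r}$ cases, the measure-preserving swap for coordinate projections when $\mathbf{r}$ is constant, and sign-symmetry of independent $\mathbf{r}$ for $1$-unconditionality --- is precisely the toolkit those references use and that the paper itself deploys in \Cref{lem:independent-sum-norm} and \Cref{thm:operators-A-B}, so it is essentially the same approach.
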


\subsection{Faithful Haar systems}

In this subsection we discuss the notion of a \emph{faithful Haar system}. This is a system of functions which are blocks of the Haar system and have a structure analogous to that of the original Haar system. In particular, their supports exhibit the same intersection pattern as the supports of the original Haar functions.
The term \emph{faithful Haar system} was first introduced in~\cite{MR4430957}, but the construction had already been utilized much earlier---see, e.g.,~\cite[p.~51]{MR0402915}.
We will also consider \emph{almost faithful Haar systems}, i.e., perturbations of faithful Haar systems which permit small gaps between the supports of the functions. This construction is due to J.~L.~B.~Gamlen and R.~J.~Gaudet~\cite{MR0328575} and has played a key role the classical works of Enflo-Maurey~\cite{MR0397386}, Alspach-Enflo-Odell~\cite{MR0500076}, Maurey~\cite{MR0586594} and others. A well-known variant of this notion is given by the \emph{compatibility conditions} of P.~W.~Jones~\cite{MR0796906}, which ensure that such a system is complemented in~BMO (see also \cite[p.~105]{MR2157745} and the generalizations in~\cite{MR3819715,MR0955660,MR1283008}).

In this paper, on the one hand, we will use finite faithful Haar systems to construct the space~$Y_\omega$, and on the other hand, we will utilize this concept in the proofs of our main results.

\begin{dfn}\label{dfn:faithful}
  Let $n\in \mathbb{N}_0$, and for every $I\in \mathcal{D}_{\le n}$, let $\mathcal{B}_I$ be a non-empty finite subcollection of $\mathcal{D}$. Moreover, let $(\theta_K)_{K\in \mathcal{D}}$ be a family of signs, and put $\tilde{h}_I = \sum_{K\in \mathcal{B}_I} \theta_Kh_K$ for $I\in \mathcal{D}_{\le n}$. We say that $(\tilde{h}_I)_{I\in \mathcal{D}_{\le n}}$ is a \emph{finite almost faithful Haar system} if the following conditions are satisfied:
\begin{enumerate}[(i)]
  \item\label{dfn:faithful:i} For every $I\in \mathcal{D}_{\le n}$, the collection $\mathcal{B}_I$ consists of pairwise disjoint dyadic intervals, and we have $\mathcal{B}_I\cap \mathcal{B}_J = \emptyset$ for all $I\ne J\in \mathcal{D}_{\le n}$.
  \item\label{dfn:faithful:ii} For every $I\in \mathcal{D}_{<n}$, we have $\mathcal{B}_{I^{\pm }}^{*}\subset [ \tilde{h}_I = \pm 1 ]$.
\end{enumerate}
If $\mathcal{B}_{[0,1)}^{*} = [0,1)$ and $\mathcal{B}_{I^{\pm }}^{*} = [ \tilde{h}_I = \pm 1 ]$ for all $I\in \mathcal{D}_{<n}$, then we say that $(\tilde{h}_I)_{I\in \mathcal{D}_{\le n}}$ is \emph{faithful}, and in this case, we usually denote the system by $(\hat{h}_I)_{I\in \mathcal{D}_{\le n}}$. If $k_0<k_1<\dots<k_n$ is a strictly increasing sequence of integers such that $\mathcal{B}_I\subset \mathcal{D}_{k_i}$ for all $I\in \mathcal{D}_i$, $0\le i\le n$, then we say that $(\tilde{h}_I)_{I\in \mathcal{D}_{\le n}}$ is an almost faithful Haar system \emph{with frequencies $k_0,\dots,k_n$}. An example of a finite faithful Haar system is given in \Cref{fig:faithful-haar-system}.
\end{dfn}

Note that every finite faithful Haar system~$(\hat{h}_I)_{I\in \mathcal{D}_{\le n}}$ is distributionally equivalent to the standard Haar system~$(h_I)_{I\in \mathcal{D}_{\le n}}$, i.e., for every family of scalars~$(a_I)_{I\in \mathcal{D}_{\le n}}$, the functions $\sum_{I\in \mathcal{D}_{\le n}} a_I \hat{h}_I$ and $\sum_{I\in \mathcal{D}_{\le n}} a_Ih_I$ have the same distribution. In particular, $|\operatorname{supp} \hat{h}_I| = |I|$ for all~$I\in \mathcal{D}_{\le n}$.

We will later use the concept of faithful Haar systems to construct independent distributional copies of the standard Haar system, which will in turn be used to define the space~$Y_{\omega}$. Moreover, to prove our main results, we will frequently construct a sequence of independent finite (almost) faithful Haar systems and combine them to obtain an \emph{infinite} system in~$Y_\omega$, spanning a subspace on which a given operator has certain desired properties. This leads to the notion of an \emph{(almost) faithful Haar system in~$Y_\omega$}, which will be introduced in \Cref{sec:embeddings-projections}.

\begin{figure}[t]
  \centering \includegraphics{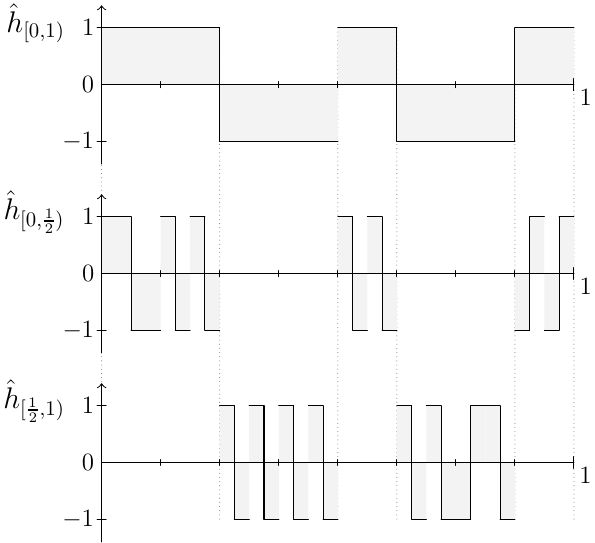}
  \caption{A finite faithful Haar system}
  \label{fig:faithful-haar-system}
\end{figure}

Finally, we describe a basic method to construct a finite faithful Haar system with given frequencies, which will be used in the definition of~$Y_\omega$ (\Cref{dfn:independent-sum}).
\begin{rem}\label{rem:faithful-from-rademachers}
  Suppose that we are given a strictly increasing finite sequence of non-negative integers $k_0 < k_1 < \dots < k_n$. Then we can inductively define a finite faithful Haar system with frequencies $k_0,\dots,k_n$ as follows: Define $\mathcal{B}_{[0,1)} = \mathcal{D}_{k_0}$ and $\hat{h}_{[0,1)} = r_{k_0}$, and for all $0\le j < n$ and $I\in \mathcal{D}_j$, put
  \begin{equation*}
    \mathcal{B}_{I^{\pm }} = \bigl\{ K\in \mathcal{D}_{k_{j+1}} : K\subset [\hat{h}_I = \pm 1] \bigr\}\qquad \text{and} \qquad
    \hat{h}_{I^{\pm }} = \sum_{K\in \mathcal{B}_{I^{\pm }}} h_K = r_{k_{j+1}}\cdot \chi_{[\hat{h}_I = \pm 1]}.
  \end{equation*}
  Note that by induction, we see that all functions $\hat{h}_I$, $I\in \mathcal{D}_{\le n}$, are measurable with respect to the $\sigma$-algebra generated by $r_{k_0},r_{k_1},\dots,r_{k_n}$.
\end{rem}

\subsection{Operators and factorization} In this subsection we introduce some terminology for stating factorization results and describing properties of linear operators and their diagonals (cf.~\cite{MR4430957,MR4894823}).

\begin{dfn}\label{dfn:factorization-modes}
  Let $X$ and $Y$ denote Banach spaces. Let $S\colon X\to X$ and $T\colon Y\to Y$ be bounded linear operators, and let
  $C, \eta \ge 0$.
  \begin{enumerate}[(i)]
    \item\label{enu:dfn:factorization-modes:a} We say that \emph{$S$ factors through $T$ with
          constant $C$ and error $\eta$} if there exist bounded linear operators $B\colon X\to Y$ and $A\colon Y\to X$ with
          $\|A\|\cdot \|B\|\leq C$ such that $\|S - ATB\|\leq \eta$.
    \item\label{enu:dfn:factorization-modes:a2} If~\eqref{enu:dfn:factorization-modes:a} holds and
          we additionally have $AB = I_X$, then we say that $S$ \emph{projectionally} factors
          through $T$ with constant $C$ and error $\eta$.
    \item We say that $S$ factors through~$T$ \emph{with constant $C^+$} if for every $\eta > 0$, $S$ factors through~$T$ with constant~$C + \eta$.
  \end{enumerate}
  If we omit the phrase ``with error $\eta$'' in~\eqref{enu:dfn:factorization-modes:a}
  or~\eqref{enu:dfn:factorization-modes:a2}, then we take that to mean that the error is~$0$.
\end{dfn}

The following observation will play an important role in the proofs of our main results. It goes back to~\cite{MR4430957}, where projectional factorizations were first utilized.

\begin{rem}\label{rem:factorization-identity-minus-T}
  Let $X,Y$ and $S,T$ be as above and suppose that $S$ \emph{projectionally} factors through~$T$ with constant~$C$ and error~$\eta$.  Then $I_X - S$ projectionally factors through $I_Y - T$ with the same constant and error.
\end{rem}

Moreover, the existence of a projectional factorization is a transitive property:
\begin{rem}\label{rem:transitivity}
  Let $R,S,T$ denote bounded linear operators between Banach spaces and suppose that~$R$ (projectionally) factors through~$S$ with constant~$C_1$ and error~$\eta_1$, and that~$S$ (projectionally) factors through~$T$ with constant~$C_2$ and error~$\eta_2$. In \cite[Proposition 2.3]{MR4430957}, it was observed that in this case,~$R$ (projectionally) factors through~$T$ with constant~$C_1C_2$ and error~$\eta_1 + C_1 \eta_2$.
\end{rem}

Now let $X$ be a Banach space with a Schauder basis $(e_n)_{n\in \mathbb{N}}$ and associated coordinate functionals~$(e_n^{*})_{n\in \mathbb{N}}$. We can identify any bounded linear operator $T \colon X \to X$ with its matrix representation $(\langle e_n^{*}, T e_m \rangle)_{n,m \in \mathbb{N}}$.
In particular, this allows us to define \emph{diagonal operators} (i.e., operators whose off-diagonal entries vanish) and operators with \emph{large diagonal} (i.e., whose diagonal entries $\langle e_n^{*}, Te_n \rangle$ are uniformly bounded away from zero).
\begin{dfn}\label{dfn:large-diagonal}
  Let $X$ be a Banach space with a Schauder basis (or a finite basis)~$(e_n)_n$ and associated coordinate functionals~$(e_n^{*})_n$. Moreover, let $\delta > 0$, and let $T\colon X\to X$ be a bounded linear operator.
  We say that
  \begin{enumerate}[(i)]
    \item $T$ has \emph{$\delta$-large diagonal with respect to~$(e_n)_n$} if
          $|\langle e_n^{*}, Te_n \rangle| \ge \delta $ for all~$n$.
    \item $T$ has $\delta$-large \emph{positive} diagonal with respect to~$(e_n)_n$ if
          $\langle e_n^{*}, Te_n \rangle \ge \delta$ for all~$n$.
    \item $T$ has \emph{large (positive) diagonal} if it has $\delta$-large (positive) diagonal for some~$\delta > 0$.
    \item $T$ is a \emph{diagonal operator} with respect to~$(e_n)_n$ if $\langle e_m^{*}, T e_n \rangle = 0$ whenever $m\ne n$.
  \end{enumerate}
  If it is clear from the context which basis~$(e_n)_n$ of~$X$ is meant, we sometimes just call~$T$ a diagonal operator or say that $T$ has large diagonal without specifying the basis.
  A diagonal operator~$D$ with respect to the Haar system~$(h_I)_{I\in \mathcal{D}}$ is called a \emph{Haar multiplier}, and its \emph{entries} $(d_I)_{I\in \mathcal{D}}$ are given by $d_I = \langle h_I, D h_I \rangle/|I|$, $I\in \mathcal{D}$.
  More generally, we define the set of \emph{diagonal entries} of any bounded linear operator~$T\colon X\to X$ by
  \begin{equation*}
    \operatorname{diag}(T) = \{ \langle e_n^{*}, Te_n \rangle : n\in \mathbb{N} \}
  \end{equation*}
  (with the obvious modification if the basis~$(e_n)_n$ is finite). By~$\mathcal{A}(\operatorname{diag}(T))$, we denote the set of all averages of finitely many elements of~$\operatorname{diag}(T)$ (where we allow repetitions).
\end{dfn}

\section{Independent sums of Haar system Hardy spaces}
\label{sec:independent-sums}

In this section, we introduce the space~$Y_{\omega}$ associated with a Haar system Hardy space~$Y$. It will be defined as the closed linear span in~$Y$ of independent distributional copies of the finite-dimensional spaces $Y_n = \operatorname{span}\{ h_I \}_{I\in \mathcal{D}_{\le n}}$, $n \in \mathbb{N}_{0}$. Moreover, we prove some basic properties of the space~$Y_{\omega}$, and in particular, we obtain a convenient formula for its norm (\Cref{lem:independent-sum-norm}).

\begin{dfn}\label{dfn:independent-sum}
Let $Y \in \mathcal{HH}_0(\delta)$ be a Haar system Hardy space. Moreover, let $(\tau_n)_{n=0}^{\infty}$ be a sequence of pairwise disjoint subsets of~$\mathbb{N}_0$ such that for every~$n$, the set~$\tau_n$ has cardinality~$n + 1$. Then, by \Cref{rem:faithful-from-rademachers}, for every $n\in \mathbb{N}_0$, we can construct a finite faithful Haar system $(h_I^n)_{I\in \mathcal{D}_{\le n}}$ whose frequencies are the elements of $\tau_n$ in increasing order. We define the independent sum~$Y_{\omega}$ as the closure of~$H_{\omega} := \operatorname{span}\{ h_I^n \}_{(n,I)\in \mathcal{D}_{\omega}}$ in~$Y$.
\end{dfn}

By the last statement of \Cref{rem:faithful-from-rademachers}, the $\sigma$-algebras $\mathcal{H}_n := \sigma(\{ h_I^n \}_{I\in \mathcal{D}_{\le n}})$, $n\in \mathbb{N}_0$, are independent. Thus, the systems $(h_I^n)_{I\in \mathcal{D}_{\le n}}$, $n\in \mathbb{N}_0$, form independent distributional copies of $(h_I)_{I\in \mathcal{D}_{\le n}}$, $n\in \mathbb{N}_0$, where $(h_I)_{I\in \mathcal{D}}$ is the standard Haar system.
As special cases, if the underlying space is~$L^p$, $1<p<\infty$, we obtain the Bourgain-Rosenthal-Schechtman~$R_{\omega}^p$ space~\cite{MR632839}, and in dyadic~$H^1$, we obtain the independent sum of the spaces~$H^1_n$ studied in~\cite{MR1008719,MR3648869}.
The first few elements of the system~$(h_I^n)_{(n,I)\in \mathcal{D}_{\omega}}$ are shown in \Cref{fig:basis-hIn} for the simple choice $\tau_0 = \{ 0 \}$, $\tau_1 = \{ 1,2 \}$, $\tau_2 = \{ 3,4,5 \}$.
\begin{figure}[t]
  \centering \includegraphics{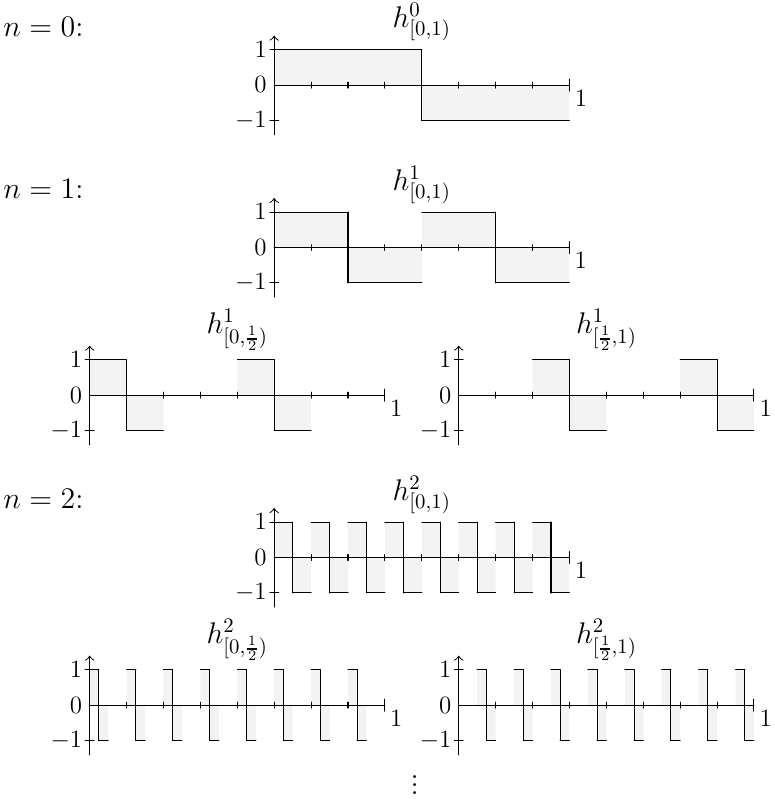}
  \caption{The functions $h_I^n$}
  \label{fig:basis-hIn}
\end{figure}

From now on, whenever we have a space~$Y_\omega$ constructed from $Y = X_0(\mathbf{r})$ for some Haar system space~$X$ and some sequence $\mathbf{r} = (r_I)_{I\in \mathcal{D}} \in \mathcal{R}$, we perform a relabeling of~$\mathbf{r}$ to obtain an associated family~$(r_I^n)_{(n,I)\in \mathcal{D}_\omega}$ indexed by pairs~$(n,I)$. Hence, if~$\mathbf{r}$ is constant, then $(r_I^n)_{(n,I)\in \mathcal{D}_\omega}$ is constant, and if~$\mathbf{r}$ is independent, then $(r_I^n)_{(n,I)\in \mathcal{D}_\omega}$ is independent.
Using this notation, the norm in $Y_{\omega}$ can be computed as follows:
\begin{lem}\label{lem:independent-sum-norm}
  Let $Y = X_0(\mathbf{r})$ be a Haar system Hardy space, and let $(h_I^n)_{(n,I)\in \mathcal{D}_{\omega}}$ be constructed as in \Cref{dfn:independent-sum}. Then for every $x = \sum_{(n,I)\in \mathcal{D}_{\omega}} a_I^nh_I^n\in H_{\omega}$, we have
\begin{align*}
  \| x \|_Y
  &= \Big\| s \mapsto \int_0^1 \Big| \sum_{(n,I)\in \mathcal{D}_{\omega}} r_I^n(u) a_I^nh_I^n(s) \Big|\, \mathrm{d}u \Big\|_{X}.
\end{align*}
\end{lem}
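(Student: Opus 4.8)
The plan is to reduce everything to the defining norm formula of the Haar system Hardy space $X(\mathbf{r})$ and then compare, pointwise in $s$, two different randomizations of the same function. Since $H_{\omega}\subset H_0$ and $Y = X_0(\mathbf{r})$ carries the norm inherited from $X(\mathbf{r})$, we have $\|x\|_Y = \|x\|_{X(\mathbf{r})}$, so by \Cref{dfn:haar-system-hardy-space} it suffices to understand the standard Haar expansion of $x$. By \Cref{rem:faithful-from-rademachers} each $h_I^n$ is a block $h_I^n = \sum_{K\in \mathcal{B}_I^n} h_K$ of the standard Haar system with all signs equal to $+1$, and the collections $\mathcal{B}_I^n$ are pairwise disjoint over all $(n,I)\in \mathcal{D}_{\omega}$: within a fixed~$n$ this is the disjointness condition~\textup{(i)} of \Cref{dfn:faithful}, while across distinct~$n$ the blocks live at levels belonging to the pairwise disjoint frequency sets~$\tau_n$. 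Hence $x = \sum_K b_K h_K$, with $b_K = a_I^n$ whenever $K\in \mathcal{B}_I^n$, is a genuine standard Haar expansion indexed by distinct intervals~$K$.

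First I would fix $s\in [0,1)$ and analyze the two integrands occurring in the definition of the $X(\mathbf{r})$-norm and in the claimed formula, namely $f_s(u) = \sum_K r_{\iota(K)}(u)\, b_K h_K(s)$ and $g_s(u) = \sum_{(n,I)} r_I^n(u)\, a_I^n h_I^n(s)$. By faithfulness, $\operatorname{supp} h_I^n = (\mathcal{B}_I^n)^{*}$, so $g_s$ has a nonzero term exactly for those $(n,I)$ in $S(s) := \{(n,I) : s\in \operatorname{supp} h_I^n\}$. For each such $(n,I)$ the disjoint intervals of $\mathcal{B}_I^n$ partition $\operatorname{supp} h_I^n$, so exactly one $K = K_I^n(s)\in \mathcal{B}_I^n$ contains~$s$, and $h_{K_I^n(s)}(s) = h_I^n(s)$. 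Consequently $f_s$ has a nonzero term for precisely the same index set $S(s)$, and the two integrands become
\begin{equation*}
  f_s(u) = \sum_{(n,I)\in S(s)} a_I^n h_I^n(s)\, r_{\iota(K_I^n(s))}(u),
  \qquad
  g_s(u) = \sum_{(n,I)\in S(s)} a_I^n h_I^n(s)\, r_I^n(u);
\end{equation*}
they agree coefficient by coefficient and differ only in the Rademacher functions employed.

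The key step is that, for each fixed $s$, these two Rademacher families are identically distributed. The family $(r_I^n)_{(n,I)\in S(s)}$ is independent, being a relabeling of the independent sequence~$\mathbf{r}$, while $(r_{\iota(K_I^n(s))})_{(n,I)\in S(s)}$ is a subfamily of the standard Rademacher sequence indexed by the distinct intervals $K_I^n(s)$ (distinct because the $\mathcal{B}_I^n$ are pairwise disjoint), hence also independent; both consist of $|S(s)|$ symmetric $\pm 1$ variables. Therefore $f_s$ and $g_s$ have the same distribution in~$u$, so that $\int_0^1 |f_s(u)|\dif u = \int_0^1 |g_s(u)|\dif u$ for every~$s$; indeed both equal the sign average $2^{-|S(s)|}\sum_{\varepsilon\in \{\pm 1\}^{S(s)}}\bigl|\sum_{(n,I)\in S(s)} \varepsilon_{n,I}\, a_I^n h_I^n(s)\bigr|$. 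Thus the two functions of~$s$ sitting inside the outer $X$-norm coincide, and applying $\|\cdot\|_X$ yields the claim. The constant case $\mathbf{r} = (r_0)_I$ is immediate, since then both integrands reduce to $r_0(u)\,x(s)$ and $X(\mathbf{r}) = X$.

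I expect the main obstacle to be the bookkeeping of the combinatorial structure in the middle step: one must verify carefully, directly from the faithful construction of \Cref{rem:faithful-from-rademachers}, both that $\operatorname{supp} h_I^n$ is partitioned by $\mathcal{B}_I^n$ (so that $K_I^n(s)$ exists, is unique, and satisfies $h_{K_I^n(s)}(s) = h_I^n(s)$) and that the selected intervals $K_I^n(s)$ are pairwise distinct over $(n,I)\in S(s)$. This latter distinctness is exactly what makes the standard Rademachers $r_{\iota(K_I^n(s))}$ independent, matching the independence of the relabeled family $(r_I^n)$, and it is the point at which the disjointness of the frequency sets~$\tau_n$ across levels---not merely faithfulness within a single level~$n$---enters the argument.
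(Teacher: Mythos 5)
Your proof is correct and follows essentially the same route as the paper's: expand each $h_I^n$ into its block of standard Haar functions, write $\|x\|_Y$ via the defining $X(\mathbf{r})$-norm, and, for each fixed $s$, exploit that at most one interval of each block contains $s$ to swap the Rademacher $r_{\iota(K)}$ for $r_I^n$. The only difference is one of detail: the paper simply asserts this replacement, whereas you make explicit its justification, namely that the two finite Rademacher families involved are both independent (by disjointness of the blocks, including across distinct $n$ via the disjoint frequency sets $\tau_n$) and hence equidistributed, so the inner integrals over $u$ coincide pointwise in $s$.
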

\begin{proof}
  Write $h_I^n = \sum_{K\in \mathcal{C}_I^n} h_K$ for $(n,I)\in \mathcal{D}_{\omega}$, where $\mathcal{C}_I^n$ is a finite collection of pairwise disjoint dyadic intervals and $\mathcal{C}_I^n \cap \mathcal{C}_J^m = \emptyset$ for all $(n,I) \ne (m,J)\in \mathcal{D}_{\omega}$. Then we have
  \begin{equation}\label{eq:17}
    \|x\|_Y = \Big\| s \mapsto \int_0^1 \Big| \sum_{(n,I)\in \mathcal{D}_{\omega}} \sum_{K\in \mathcal{C}_I^n} r_K(u)a_I^n h_K(s)  \Big|\, \mathrm{d}u \Big\|_X.
  \end{equation}
  Now observe that for every $s\in [0,1)$ and every pair~$(n,I)\in \mathcal{D}_{\omega}$, there is at most one interval $K\in \mathcal{C}_I^n$ such that $s\in K$. Thus, in~\eqref{eq:17}, we may replace the corresponding Rademacher function~$r_K$ by~$r_I^n$ whenever~$K\in \mathcal{C}_I^n$, which completes the proof.
\end{proof}
\begin{rem}\label{rem:dfn-does-not-depend-on-choice}
  The resulting space in \Cref{dfn:independent-sum} does not depend on the choice of the sets~$\tau_n$:
  Let $(\tilde{h}_I^n)_{(n,I)\in \mathcal{D}_{\omega}}$ be another system of functions in~$H$, originating from a different sequence of sets $(\tilde{\tau}_n)_{n\in \mathbb{N}_0}$ and constructed according to \Cref{dfn:independent-sum}. Then the linear extension of $h_I^n\mapsto \tilde{h}_I^n$ provides an isometric isomorphism between the subspaces $[h_I^n]_{(n,I)\in \mathcal{D}_{\omega}}$ and $[\tilde{h}_I^n]_{(n,I)\in \mathcal{D}_{\omega}}$ of~$X(\mathbf{r})$. To see this, note that for every~$N\in \mathbb{N}$, both $(h_I^n)_{I\in \mathcal{D}_{\le n}}$ ($0\le n\le N$) and $(\tilde{h}_I^n)_{I\in \mathcal{D}_{\le n}}$ ($0\le n\le N$) are independent distributional copies of $(h_I)_{I\in \mathcal{D}_{\le n}}$ ($0\le n\le N$). This implies that the families
  \begin{equation}\label{eq:1}
    (h_I^n : I\in \mathcal{D}_{\le n},\, 0\le n\le N)\qquad  \text{and} \qquad  (\tilde{h}_I^n : I\in \mathcal{D}_{\le n},\, 0\le n\le N)
  \end{equation}
  also have the same distribution, and upon inspecting the formula for~$\|x\|_Y$ in \Cref{lem:independent-sum-norm}, we obtain the claimed result.
  Finally, in passing, we note that all linear combinations $\sum_{n=0}^N\sum_{I\in \mathcal{D}_{\le n}} a_I^nh_I^n$ and $\sum_{n=0}^N\sum_{I\in \mathcal{D}_{\le n}} a_I^n \tilde{h}_I^n$ have the same distribution as the function
  \begin{equation*}
    [0,1)^N \to \mathbb{R},\qquad
    (s_0,\dots,s_N) \mapsto \sum_{n=0}^N \sum_{I\in \mathcal{D}_{\le n}} a_I^n h_I(s_n).
  \end{equation*}
\end{rem}

The following result will be widely used throughout this paper.
\begin{lem}\label{lem:schauder-basis}
  Let~$Y = X_0(\mathbf{r})$ be a Haar system Hardy space. Then~$(h_I^n)_{(n,I)\in \mathcal{D}_{\omega}}$, in the order defined in \Cref{sec:notation}, is a monotone Schauder basis of~$Y_{\omega}$, and it is $1$-unconditional if~$\mathbf{r}$ is independent. Moreover, put $H_\omega^n = \operatorname{span}\{ h_I^n \}_{I\in \mathcal{D}_{\le n}}$ for $n\in \mathbb{N}_0$. Then $(H_{\omega}^n)_{n=0}^{\infty}$ is a $1$-suppression unconditional finite-dimensional Schauder decomposition of~$Y_{\omega}$, and for every~$n\in \mathbb{N}_0$, the subspace $H_{\omega}^n\subset Y_{\omega}$ is isometrically isomorphic to~$Y_n\subset Y$.
\end{lem}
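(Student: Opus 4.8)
The plan is to establish the four assertions---the isometry $H_\omega^n\cong Y_n$, the monotone basis property, the $1$-suppression unconditionality of the FDD, and $1$-unconditionality under independence---by reducing every relevant partial-sum or block projection to a conditional expectation in the spatial variable~$s$ and invoking \Cref{lem:cond-expect}. Throughout I work with a finitely supported $x=\sum_{(n,I)}a_I^nh_I^n\in H_\omega$ and, for $u\in[0,1)$, write $g_u=\sum_{(n,I)}r_I^n(u)a_I^nh_I^n$, so that $\|x\|_Y=\bigl\|s\mapsto\int_0^1|g_u(s)|\,\mathrm{d}u\bigr\|_X$ by \Cref{lem:independent-sum-norm}. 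First I would dispose of the isometry: since $(h_I^n)_{I\in\mathcal{D}_{\le n}}$ is a finite faithful Haar system, it is distributionally equivalent to $(h_I)_{I\in\mathcal{D}_{\le n}}$ (as recorded after \Cref{dfn:faithful}), and because the relabeled sequence $(r_I^n)_I$ is a copy of~$\mathbf{r}$, the functions $(s,u)\mapsto\sum_I r_I^n(u)a_Ih_I^n(s)$ and $(s,u)\mapsto\sum_I r_I(u)a_Ih_I(s)$ have the same distribution. Rearrangement invariance of $\|\cdot\|_X$ then gives $\|\sum_I a_Ih_I^n\|_Y=\|\sum_I a_Ih_I\|_Y$, i.e.\ $h_I\mapsto h_I^n$ is the claimed isometry; this is precisely the computation already carried out in \Cref{rem:dfn-does-not-depend-on-choice}.

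The technical engine for the remaining claims is a contraction principle: if $P$ is a linear operator on $H_\omega$ for which there is a $\sigma$-algebra $\mathcal{F}$, generated by a partition of $[0,1)$ into finitely many finite unions of dyadic intervals, such that the inner function of $Px$ equals $\mathbb{E}^{\mathcal{F}}g_u$ for every~$u$, then $\|P\|\le 1$ on $Y_\omega$. Indeed, Jensen's inequality gives $|\mathbb{E}^{\mathcal{F}}g_u|\le\mathbb{E}^{\mathcal{F}}|g_u|$ pointwise, and Fubini's theorem yields $\int_0^1|\mathbb{E}^{\mathcal{F}}g_u(s)|\,\mathrm{d}u\le(\mathbb{E}^{\mathcal{F}}F)(s)$ with $F(s)=\int_0^1|g_u(s)|\,\mathrm{d}u$; combining the lattice monotonicity of $\|\cdot\|_X$ (\Cref{pro:HS-1d}) with \Cref{lem:cond-expect} then gives $\|Px\|_Y\le\|\mathbb{E}^{\mathcal{F}}F\|_X\le\|F\|_X=\|x\|_Y$. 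It therefore remains only to realize the various projections as spatial conditional expectations of this type.

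Now I would apply the principle twice. For the FDD, given $A\subseteq\mathbb{N}_0$ put $\mathcal{G}_A=\sigma(\mathcal{H}_n:n\in A)$; since the $\mathcal{H}_n$ are mutually independent and each $h_I^n$ has integral zero, $\mathbb{E}^{\mathcal{G}_A}$ fixes $h_I^n$ for $n\in A$ and annihilates it for $n\notin A$, so the FDD projection $\sum_m x_m\mapsto\sum_{m\in A}x_m$ coincides with $\mathbb{E}^{\mathcal{G}_A}$ in the $s$-variable. As $\mathcal{G}_A$ (restricted to the finite support of $x$) is generated by a partition into finite unions of dyadic intervals, the principle yields $1$-suppression unconditionality of $(H_\omega^n)$, and monotonicity of the FDD as the special case $A=\{0,\dots,n_0\}$. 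For the full basis, an initial segment in $\prec$ consists of blocks $0,\dots,n_0-1$ together with an $\iota$-initial-segment of block $n_0$; by faithfulness the within-block Haar partial sum $P^{(N)}$ on $(h_I^{n_0})_I$ is the conditional expectation onto a $\sigma$-algebra $\mathcal{F}'\subseteq\mathcal{H}_{n_0}$ generated by a partition into finite unions of dyadic intervals. Setting $\mathcal{F}=\sigma(\mathcal{H}_{<n_0})\vee\mathcal{F}'$ and using that $\mathcal{H}_{n_0}$ is independent of $\mathcal{H}_{<n_0}$ (so $\mathbb{E}^{\mathcal{F}}=\mathbb{E}^{\mathcal{F}'}$ on $\mathcal{H}_{n_0}$-measurable functions), one checks that $\mathbb{E}^{\mathcal{F}}$ fixes the lower blocks, acts as $P^{(N)}$ on block $n_0$, and kills all higher blocks; hence the initial-segment projection equals $\mathbb{E}^{\mathcal{F}}$ on $H_\omega$, and the principle gives monotonicity. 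Finally, $1$-unconditionality under independence is immediate: replacing each $a_I^n$ by $\varepsilon_I^na_I^n$ leaves the distribution of $u\mapsto\sum r_I^n(u)a_I^nh_I^n(s)$ unchanged for every fixed $s$ (the $r_I^n$ being independent and symmetric), so $F$ and hence $\|x\|_Y$ are sign-invariant.

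The hard part will be the second application. The order $\prec$ on $\mathcal{D}_\omega$ does not match the underlying Haar order $\iota$, so a basis partial sum is \emph{not} an initial-segment Haar projection in $X$ and need not be a spatial conditional expectation on its own. The crux is to see that, although the retained functions $h_I^{n_0}$ live on the interleaved frequencies $\tau_{n_0}$, the combined projection is still realized by a single conditional expectation onto a partition into finite unions of dyadic intervals---this is exactly where the faithfulness of $(h_I^{n_0})_I$ (guaranteeing that $P^{(N)}=\mathbb{E}^{\mathcal{F}'}$ with $\mathcal{F}'$ of the right form) and the conditional-independence identity $\mathbb{E}^{\sigma(\mathcal{H}_{<n_0})\vee\mathcal{F}'}=\mathbb{E}^{\mathcal{F}'}$ on $\mathcal{H}_{n_0}$-measurable functions do the essential work.
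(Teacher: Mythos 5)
Your proof is correct, but it takes a genuinely different route from the paper's. The paper dispatches everything through the relabeling freedom of \Cref{rem:dfn-does-not-depend-on-choice}: since the space is invariant under the choice of the frequency sets~$\tau_n$, one may reorder them so that (a) for the basis property, the frequencies increase with~$n$, making the $h_I^n$ blocks of the standard Haar system in its natural order (the paper then cites the argument of \cite[Proposition~2.13~(c)]{MR4430957}); (b) for the suppression projections~$P_\sigma$, the retained blocks have all frequencies below some~$N$ and the discarded ones above, so that $P_\sigma$ is literally the standard Haar basis projection onto $\operatorname{span}\{h_I\}_{I\in\mathcal{D}_{\le N}}$ restricted to~$Y_\omega$, hence of norm one by monotonicity of the Haar basis of~$Y$; and (c) for the isometry $H_\omega^n\cong Y_n$, one may take $\tau_n=\{0,\dots,n\}$ so that $h_I^n=h_I$. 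You instead run everything through spatial conditional expectations: your contraction principle (Jensen, Fubini, lattice monotonicity, \Cref{lem:cond-expect}) combined with realizing the FDD projections as $\mathbb{E}^{\sigma(\mathcal{H}_n:\,n\in A)}$ and the basis partial sums as $\mathbb{E}^{\sigma(\mathcal{H}_{<n_0})\vee\mathcal{F}'}$. The paper itself flags the conditional-expectation route as an alternative, but only for the FDD assertion (``utilizing \Cref{lem:cond-expect-sum} together with \Cref{lem:cond-expect}''); your FDD argument is exactly that alternative, and your treatment of the basis partial sums extends it to the part where the paper relies on the reordering trick. The extension is where the real work lies, and your key claim there is sound: for a \emph{faithful} system the level sets $[h_J^{n_0}=\pm1]$ partition exactly (no gaps), so every $\iota$-initial-segment projection within a block is the conditional expectation onto the $\sigma$-algebra generated by the current atoms, each a finite union of dyadic intervals, and every non-retained $h_I^{n_0}$ has mean zero on the atom containing its support; joint independence of the~$\mathcal{H}_n$ then kills the higher blocks and reduces $\mathbb{E}^{\mathcal{F}}$ to $\mathbb{E}^{\mathcal{F}'}$ on block~$n_0$ (an instance of \Cref{lem:cond-expect-sum}). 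What each approach buys: the paper's is shorter and leans on invariance plus known facts about the standard Haar basis; yours is self-contained, avoids the external citation, treats the constant and independent cases of~$\mathbf{r}$ uniformly without case-splitting, and makes the martingale structure underlying the monotonicity explicit---at the cost of the somewhat delicate verification that the mid-block partial sums really are conditional expectations, which, as you correctly note, is precisely where faithfulness (as opposed to almost faithfulness) is indispensable.
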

\begin{proof}
  If~$\mathbf{r}$ is independent, then the $1$-unconditionality is immediate from \Cref{lem:independent-sum-norm}.
  Otherwise, we may assume that the sets $\tau_n$ in \Cref{dfn:independent-sum} satisfy $\max \tau_{n-1} < \min \tau_n$ for all~$n\in \mathbb{N}$ (see \Cref{rem:dfn-does-not-depend-on-choice}). Then the proof that~$(h_I^n)_{(n,I)\in \mathcal{D}_{\omega}}$ is a monotone Schauder basis is analogous to \cite[Proposition~2.13~(c)]{MR4430957}, exploiting that the functions~$(h_I^n)_{(n,I)\in \mathcal{D}_{\omega}}$ are blocks of the standard Haar system whose frequencies increase with~$n$.

  To prove the second assertion, let $\sigma\subset \mathbb{N}_0$ be finite, and let $P_\sigma\colon Y_{\omega}\to Y_{\omega}$ be the projection given by $P_\sigma h_I^n = h_I^n$ for $n\in \sigma$ and $P_\sigma h_I^n = 0$ for $n\notin \sigma$. Again exploiting \Cref{rem:dfn-does-not-depend-on-choice}, we may assume that the sets $\tau_n$ are arranged in such a way that for some fixed $N\in \mathbb{N}_0$, we have $\max \tau_n \le  N < \min \tau_k$ whenever $n\in \sigma$ and $k\in \mathbb{N}_0\setminus \sigma$. Then $P_\sigma$ has norm one because the \emph{standard} Haar system $(h_I)_{I\in \mathcal{D}}$ is a monotone Schauder basis of~$Y$ and $P_\sigma$ is just the associated basis projection onto~$\operatorname{span}\{ h_I \}_{I\in \mathcal{D}_{\le N}}$, restricted to~$Y_{\omega}$.  Alternatively, the second assertion can be proved via conditional expectations, utilizing \Cref{lem:cond-expect-sum} together with \Cref{lem:cond-expect}.

  To see the final statement, note that by \Cref{rem:dfn-does-not-depend-on-choice}, for fixed $n\in \mathbb{N}_0$, we may assume that $\tau_n < \tau_0 < \tau_1 < \dots$ as well as $\tau_n = \{ 0,1,\dots,n \}$ and $h_I^n = h_I$ for all $I\in \mathcal{D}_{\le n}$.
\end{proof}
\begin{rem}
  For $(n,I)\in \mathcal{D}_{\omega}$, the coordinate functional associated with the basis element~$h_I^n$ is given by $x\mapsto |I|^{-1}\int h_I^n x$, $x\in H_{\omega}$. In particular, $H_{\omega} = \operatorname{span}\{ h_I^n \}_{(n,I)\in \mathcal{D}_{\omega}}$ can be identified with a subspace of~$Y_{\omega}^{*}$.
\end{rem}

\begin{rem}
  In general, the Schauder decomposition in \Cref{lem:schauder-basis} is not $1$-unconditional: For example, $f = h_{[0,1)}^1 + 2h_{[0,\frac{1}{2})}^1$ and $g = h_{[0,1)}^2 + 2h_{[0,\frac{1}{2})}^2$ satisfy $\|f + g\|_{L^1} \ne \|f - g\|_{L^1}$.
\end{rem}

\begin{rem}\label{rem:isometric-isomorphism-Jn}
  Let~$Y$ be a Haar system Hardy space, and for~$n\in \mathbb{N}_0$, let $J_n\colon Y_n\to H_{\omega}^n$ be the isometric isomorphism given by~$J_nh_I = h_I^n$, $I\in \mathcal{D}_{\le n}$, and let $P_n\colon Y_{\omega}\to Y_{\omega}$ be the norm-one projection onto~$H_{\omega}^n$ defined by
  \begin{equation*}
    P_n\Bigl( \sum_{m=0}^{\infty}\sum_{I\in \mathcal{D}_{\le m}} a_I^m h_I^m  \Bigr)
    = \sum_{I\in \mathcal{D}_{\le n}} a_I^n h_I^n
  \end{equation*}
  (we know from \Cref{lem:schauder-basis} that $P_n$ has norm one).
 First, note that~$J_n$ preserves the tree structure of the Haar system: If $I,J\in \mathcal{D}_{\le n}$, then $I\subset J^{\pm }$ if and only if $\operatorname{supp} h_I^n\subset [h_J^n = \pm 1]$, and we have $I\cap J=\emptyset$ if and only if $h_I^n$ and~$h_J^n$ are disjointly supported. Moreover, any function $x\in Y_n$ has the same distribution as~$J_nx\in H_{\omega}^n$ and, in particular, $|\operatorname{supp} x| = |\operatorname{supp} J_n x|$.
  Now let $T\colon Y_{\omega}\to Y_{\omega}$ be a bounded linear operator. Then the operator~$T_n \colon Y_n\to Y_n$ defined by
  \begin{equation*}
    T_n = J_n^{-1}P_nTJ_n
  \end{equation*}
  satisfies $\|T_n\|\le \|T\|$, and with respect to the Haar basis, it has the same entries as~$T$ restricted to $H_{\omega}^n$:
  \begin{equation}\label{eq:22}
    \langle h_I, T_n h_J \rangle = \langle h_I^n, T h_J^n \rangle, \qquad I,J\in \mathcal{D}_{\le n}.
  \end{equation}
  In particular, if~$T$ has $\delta$-large (positive) diagonal, then so does~$T_n$.
  More generally, for all $x,y\in Y_n$, we have $\langle x,y \rangle = \langle J_nx, J_n y \rangle$ and thus $\langle x, T_n y \rangle = \langle J_nx, T J_ny \rangle$.
  Finally, \eqref{eq:22} together with \Cref{pro:HSHS-1d}~\eqref{pro:HSHS-1d:iii} implies that the diagonal entries of~$T$ lie between~$-\|T\|$ and~$\|T\|$:
  \begin{equation*}
    \frac{|\langle h_I^n, T h_I^n \rangle|}{|I|}  = \frac{|\langle h_I, T_n h_I \rangle|}{|I|}\le \frac{\|h_I\|_{Y^{*}}\|h_I\|_Y}{|I|} \|T\| = \|T\|.
  \end{equation*}
\end{rem}

The following standard result will be used in the heart of the proof of our diagonalization result, \Cref{pro:diagonalization}.

\begin{lem}\label{lem:haar-weakly-null}
  Let~$Y\in \mathcal{HH}_0(\delta)$ be a Haar system Hardy space, and assume that the sequence of standard Rademacher functions~$(r_n)_{n=0}^{\infty}$ is weakly null in~$Y$. For~$n\in \mathbb{N}_0$, let $\mathcal{K}_n$ denote the set of all functions~$f\in H_0$ of the form $f = \sum_{K\in \mathcal{B}} \theta_K h_K$, where $\mathcal{B}\subset \mathcal{D}_n$ and $(\theta_K)_{K\in \mathcal{D}_n}\in \{ \pm 1 \}^{\mathcal{D}_n}$. Then for every $y\in Y$ and $y^{*}\in Y^{*}$, we have
  \begin{equation*}
    \lim_{n\to \infty} \sup_{f\in \mathcal{K}_n} |\langle f, y \rangle| = 0
    \qquad \text{and}\qquad
    \lim_{n\to \infty} \sup_{f\in \mathcal{K}_n} |\langle y^{*}, f \rangle| = 0.
  \end{equation*}
  In particular, every subsequence of the basis~$(h_I^n)_{(n,I)\in \mathcal{D}_{\omega}}$ constructed in \Cref{dfn:independent-sum} is weakly null in~$Y$ (and hence in~$Y_{\omega}$) as well as weak* null in~$Y^{*}$ (and hence in~$Y_{\omega}^{*}$).
\end{lem}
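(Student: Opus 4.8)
The plan is to reduce both limits to a single combinatorial quantity. For fixed $y^{*}$ (resp.\ $y$), maximizing over the signs and over the subcollection $\mathcal{B}\subset\mathcal{D}_n$ gives
\[
  \sup_{f\in\mathcal{K}_n}|\langle y^{*},f\rangle| = \sum_{K\in\mathcal{D}_n}|\langle y^{*},h_K\rangle|,
  \qquad
  \sup_{f\in\mathcal{K}_n}|\langle f,y\rangle| = \sum_{K\in\mathcal{D}_n}|\langle h_K,y\rangle|,
\]
each supremum being attained at a sign-optimized sum $g_n\in\mathcal{K}_n$. The ``in particular'' then follows at once: in any subsequence of $(h_I^n)$ the indices are distinct, so (each level carrying only finitely many intervals) the defining frequencies tend to infinity, and by \Cref{rem:faithful-from-rademachers} each $h_I^n$ is a single-level block $\sum_{K}h_K$ with $\mathcal{B}\subset\mathcal{D}_m$; hence it lies in $\mathcal{K}_m$ with $m\to\infty$, and is dominated by the two suprema. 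For the \emph{weak$^{*}$} statement I would argue by density, which requires no hypothesis on $(r_n)$: every $f\in\mathcal{K}_n$ is a signed sum of disjointly supported Haar functions, so $\|f\|_{Y^{*}}\le 1$ by \Cref{pro:HSHS-1d}. Given $y\in Y$ and $\varepsilon>0$, pick a finite Haar sum $\tilde y\in H_0$ with $\|y-\tilde y\|_Y<\varepsilon$; once $n$ exceeds the largest level in the support of $\tilde y$ the contribution of $\tilde y$ vanishes, while $\sum_{K\in\mathcal{D}_n}|\langle h_K,y-\tilde y\rangle| = \langle g_n,\,y-\tilde y\rangle \le \|g_n\|_{Y^{*}}\|y-\tilde y\|_Y<\varepsilon$.

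The heart of the matter is the \emph{weak} statement, and this is where weak nullity of $(r_n)$ enters, the point being that $H_0$ need not be dense in $Y^{*}$. By the reduction it suffices to prove the claim that every sequence $g_j\in\mathcal{K}_{n_j}$ with $n_j\uparrow\infty$ is weakly null in $Y$: for if $\sum_{K\in\mathcal{D}_{n_j}}|\langle y^{*},h_K\rangle|\ge\varepsilon$ along some subsequence, the optimizers would contradict weak nullity. I would establish the claim by showing that $(g_j)$ is relatively weakly compact and that its only possible weak cluster point is $0$. The latter is easy: testing against $h_I\in Y^{*}$, one has $\langle h_I,g_j\rangle=0$ as soon as $n_j$ exceeds the level of $I$, so any cluster point has vanishing Haar coefficients and hence equals $0$, because $(h_I)_{I\in\mathcal{D}}$ is a Schauder basis of $Y$ (\Cref{pro:HSHS-1d}).

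To obtain relative weak compactness I would embed $Y$ into a Banach lattice. The linear map $\iota\colon x=\sum a_Ih_I\mapsto G_x$, where $G_x(s,u)=\sum_{I}r_I(u)a_Ih_I(s)$, is by the norm formula of \Cref{dfn:haar-system-hardy-space} an isometry of $Y$ into the K\"othe space $Z:=X(L^{1})$ of functions $F(s,u)$ with $\|F\|_Z=\bigl\|\,s\mapsto\|F(s,\cdot)\|_{L^{1}(\mathrm{d}u)}\,\bigr\|_X$. Since the $h_K$ with $K\in\mathcal{D}_{n_j}$ are disjointly supported, $|G_{g_j}|\le\chi_{[0,1)^{2}}$, so $(G_{g_j})$ lies in a single order interval of $Z$. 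Now the hypothesis that $(r_n)$ is weakly null is equivalent to $\|\cdot\|_Y\not\sim\|\cdot\|_{L^{\infty}}$ and hence to order continuity of the norm of $X$ (cf.\ \cite[Section~3.2]{speckhofer:2025}); a routine dominated-convergence argument then yields that $Z$ also has order-continuous norm. Consequently order intervals of $Z$ are weakly compact, so $(G_{g_j})$, and therefore $(g_j)$ via the isometric embedding, is relatively weakly compact, proving the claim.

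The main obstacle is precisely this last passage: upgrading the weak nullity of the \emph{single} sequence $(r_n)$ to the \emph{uniform} weak nullity over all sign patterns and all subcollections at a given level. The device that makes it work is the realization of $Y$ inside the order-continuous lattice $Z=X(L^{1})$, which converts the hypothesis into weak compactness of order intervals; identifying the weak limit as $0$ is then immediate, but verifying order continuity of $Z$ and the good behaviour of $\iota$ under the weak topologies is the technical core of the argument.
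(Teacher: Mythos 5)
Your reduction to the sign-optimized sums $\phi_n=\sum_{K\in\mathcal{D}_n}\theta_K h_K$, the density argument for the first limit (using $\|f\|_{Y^{*}}\le 1$ from \Cref{pro:HSHS-1d}~\eqref{pro:HSHS-1d:v}), the Eberlein--Smulian scheme, and the identification of $0$ as the only possible weak cluster point are all correct. The genuine gap is the step that is supposed to deliver relative weak compactness. You claim that the hypothesis ``$(r_n)$ is weakly null in $Y$'' is equivalent to $\|\cdot\|_Y\not\sim\|\cdot\|_{L^{\infty}}$ \emph{and hence} to order continuity of the norm of $X$; this second equivalence is false, because the hypothesis is a property of $Y=X_0(\mathbf{r})$ and, when $\mathbf{r}$ is independent, it carries no order-theoretic information about $X$ whatsoever. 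Concretely, let $X$ be the closure of $H$ in $L^{\infty}$ and let $\mathbf{r}$ be independent, so that $Y$ is the square-function space $SL^{\infty}$ (a space the paper explicitly intends to cover). The norm formula of \Cref{dfn:haar-system-hardy-space} gives $\bigl\|\sum_n a_n r_n\bigr\|_Y=\int_0^1\bigl|\sum_n a_n r_n(u)\bigr|\,\mathrm{d}u$, so by Khintchine's inequality the Rademachers are equivalent to the $\ell^2$ unit vector basis and the hypothesis of the lemma holds. Yet $X$ carries the sup norm, which is not order continuous, and your space $Z=X(L^1)$ contains the functions constant in $u$ as an isometric lattice copy of $X$; inside the order interval $[-\chi_{[0,1)^2},\chi_{[0,1)^2}]$ of $Z$ sits the sequence $(s,u)\mapsto r_n(s)$, which in $X$ spans $\ell^1$ isometrically ($\|\sum a_n r_n\|_{L^{\infty}}=\sum|a_n|$) and hence has no weakly convergent subsequence. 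So order intervals of $Z$ are \emph{not} weakly compact, and your compactness argument collapses in exactly one of the cases the lemma must handle. (Even in the constant-$\mathbf{r}$ case, where the chain of equivalences is plausible, the identification of the abstract completion $X$ with a K\"othe ideal --- needed for $X(L^1)$, its order intervals, and the passage from $\|\chi_{[0,2^{-n})}\|_X\to 0$ to order continuity to make sense --- is left unproved, as you acknowledge.)

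The mechanism that actually proves the second limit, and which is why the paper can simply cite \cite[Lemma~4.3]{MR4430957} and assert that the proof carries over to Haar system Hardy spaces, bypasses weak compactness entirely and uses the hypothesis exactly once. Fix $y^{*}$ and let $\phi_n\in\mathcal{K}_n$ be the optimizer at level $n$. Each $\phi_n$ is $\pm1$-valued and constant on intervals of $\mathcal{D}_{n+1}$, and on every atom of $\sigma(\phi_0,\dots,\phi_{n-1})$ it takes each value $\pm 1$ on a set of exactly half the measure; hence $(\phi_n)_{n}$ has the same \emph{joint} distribution as the Rademacher sequence $(r_n)_n$. If $\mathbf{r}$ is constant, \Cref{dfn:HS-1d}~\eqref{dfn:HS-1d:1} then makes $r_n\mapsto\phi_n$ an isometry between closed linear spans; if $\mathbf{r}$ is independent, the identity $\|\sum_n a_n\phi_n\|_Y=\|\sum_n a_n r_n\|_Y$ follows directly from the norm formula, since for each fixed $s$ the Rademachers $r_K(u)$ appearing in the two expressions are independent families of signs and one invokes their sign-symmetry. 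Weak nullity passes through such an isometry in both directions (restrict functionals to the span and extend by Hahn--Banach), so the hypothesis gives that $(\phi_n)$ is weakly null, and therefore $\sup_{f\in\mathcal{K}_n}|\langle y^{*},f\rangle|=\langle y^{*},\phi_n\rangle\to 0$. I recommend replacing your lattice-theoretic compactness step with this distributional transfer; the rest of your write-up can stay as is.
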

\begin{proof}
  In \cite[Lemma~4.3]{MR4430957}, this is proved for Haar system spaces. The same proof also works in a Haar system Hardy space~$Y$.
\end{proof}

\section{Embeddings and projections on $Y_{\omega}$}\label{sec:embeddings-projections}

In this section, we lay the foundation for proving our main results. We introduce the notions of \emph{faithful} and \emph{almost faithful Haar systems in~$Y_{\omega}$}, and we prove that the canonical operators $A$ and~$B$ associated with such a system are bounded (\Cref{thm:operators-A-B} and \Cref{thm:operators-A-B-almost-faithful}). This constitutes one of the main parts of the paper and will be used throughout the subsequent sections: In every reduction step, we will first construct an (almost) faithful Haar system in~$Y_{\omega}$ with certain additional properties and then use the associated operators $A$ and~$B$ to construct a factorization, eventually reducing a given operator~$T$ to a scalar operator.

Note that unlike $\ell^p$- and $c_0$-direct sums, it is not clear whether a sequence of uniformly bounded linear operators acting on the components~$H_\omega^n$ induces a bounded linear operator on~$Y_\omega$. Hence, the analogous results in~\cite{MR4894823,MR4884827} for the operators $A,B$ associated with a faithful Haar system in~$Y$ or~$Y_n$ do not transfer directly to our setting. In the case where the underlying space is the dyadic Hardy space~$H^1$, the work of M.~Rzeszut and M.~Wojciechowski~\cite[Section~5]{MR3648869} contains sufficient conditions involving \emph{$K$-closedness} which ensure that the resulting operator on the independent sum of the spaces~$H^1_n$ is indeed bounded. Here, instead, we pursue a more direct approach, which is adapted to our operators and exploits that~$BA$ acts like a conditional expectation. We start by proving some basic results on the boundedness of diagonal operators and conditional expectations on~$Y_{\omega}$.
\begin{lem}\label{lem:diagonal-operator-bounded}
  Let~$Y$ be a Haar system Hardy space, let~$\eta > 0$, and let~$(d_I^n)_{(n,I)\in \mathcal{D}_{\omega}}$ be a sequence of scalars such that~$|d_I^n|\le 8^{-1}4^{-n}\eta$ for all~$(n,I)\in \mathcal{D}_{\omega}$. Then the diagonal operator~$D\colon Y_{\omega}\to Y_{\omega}$ with~$D h_I^n = d_I^nh_I^n$, $(n,I)\in \mathcal{D}_{\omega}$, is bounded and satisfies~$\|D\|\le \eta$.
\end{lem}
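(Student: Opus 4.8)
The plan is to decompose $D$ along the finite-dimensional decomposition $(H_\omega^n)_{n=0}^\infty$ provided by \Cref{lem:schauder-basis} and to estimate each piece separately, exploiting that the rapid decay of the bound on the entries~$d_I^n$ outweighs the growth of $\dim H_\omega^n$. Since $D$ is diagonal with respect to $(h_I^n)_{(n,I)\in\mathcal{D}_\omega}$, it maps each component $H_\omega^n$ into itself and annihilates the others; thus, writing $P_n$ for the norm-one projection onto $H_\omega^n$ from \Cref{rem:isometric-isomorphism-Jn}, we have $D P_n = P_n D =: D_n$ on $H_\omega$, and formally $D = \sum_{n=0}^\infty D_n$. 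First I would bound $\|D_n\|$, then sum these bounds. The crucial point is that $\dim H_\omega^n = |\mathcal{D}_{\le n}| = 2^{n+1}-1$ grows only exponentially in~$n$, which is dominated by the factor $4^{-n}$ appearing in the hypothesis.

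To estimate $\|D_n\|$, I would transfer to $Y_n$ via the isometric isomorphism $J_n\colon Y_n\to H_\omega^n$ of \Cref{rem:isometric-isomorphism-Jn}: the restriction of $D$ to $H_\omega^n$ corresponds to the Haar multiplier $\tilde D_n = J_n^{-1}DJ_n$ on $Y_n$ with $\tilde D_n h_I = d_I^n h_I$ for $I\in\mathcal{D}_{\le n}$. For $y = \sum_{I\in\mathcal{D}_{\le n}} a_I h_I\in Y_n$, the triangle inequality together with the single-layer estimate \Cref{pro:HSHS-1d}~\eqref{pro:HSHS-1d:single-layer} (which gives $\|a_I h_I\|_Y\le\|y\|_Y$ for every~$I$) yields
\begin{equation*}
  \|\tilde D_n y\|_Y
  \le \sum_{I\in\mathcal{D}_{\le n}} |d_I^n|\,\|a_I h_I\|_Y
  \le \Bigl(\sum_{I\in\mathcal{D}_{\le n}} |d_I^n|\Bigr)\|y\|_Y
  \le (2^{n+1}-1)\cdot 8^{-1}4^{-n}\eta\,\|y\|_Y
  \le 2^{-n-2}\eta\,\|y\|_Y .
\end{equation*}
Since $J_n$ is isometric and $\|P_n\|=1$, the factorization $D_n = J_n\tilde D_n J_n^{-1}P_n$ gives $\|D_n\|\le\|\tilde D_n\|\le 2^{-n-2}\eta$ as an operator on $Y_\omega$.

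Finally, summing over~$n$ gives $\sum_{n=0}^\infty \|D_n\| \le \tfrac{\eta}{4}\sum_{n=0}^\infty 2^{-n} = \tfrac{\eta}{2}$, so the partial sums $\sum_{n=0}^N D_n$ form a Cauchy sequence in operator norm converging to a bounded operator of norm at most~$\eta$; this limit agrees with $D$ on the dense subspace $H_\omega$ (where the sum is finite and reproduces $Dx$), so $D$ extends to a bounded operator on $Y_\omega$ with $\|D\|\le\eta$. I do not expect a serious obstacle here: the only genuine structural subtlety is that $Y_\omega$ is \emph{not} an $\ell^p$- or $c_0$-sum of the $H_\omega^n$, so one cannot bound $\|D\|$ by a supremum of the component norms. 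Instead I sum the operator norms directly, which succeeds precisely because the hypothesis enforces geometric decay that dominates the exponential growth of $\dim H_\omega^n$ — and the constant $8^{-1}$ in the hypothesis is exactly what leaves room to spare in this crude summation.
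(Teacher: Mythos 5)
Your proof is correct and takes essentially the same route as the paper's: a term-by-term triangle inequality in which each coordinate norm $\|a_I^n h_I^n\|_Y$ is controlled by $\|x\|_Y$, and the geometric decay $8^{-1}4^{-n}$ of the entries dominates the growth $|\mathcal{D}_{\le n}|\le 2^{n+1}$ of the number of terms. The only differences are in packaging: the paper bounds the coordinates by $2\|x\|_Y$ directly from the monotone-Schauder-basis property of $(h_I^n)_{(n,I)\in\mathcal{D}_{\omega}}$ and estimates $\|Dx\|_Y$ on the dense subspace $H_{\omega}$ in one global sum, whereas you route through $P_n$, $J_n$ and the single-layer estimate of \Cref{pro:HSHS-1d} to get constant $1$ per component and organize the argument as a norm-convergent sum of component operators, which even yields the marginally sharper bound $\|D\|\le \eta/2$.
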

\begin{proof}
  Let $x = \sum_{(n,I)\in \mathcal{D}_{\omega}} a_I^n h_I^n\in H_{\omega}$. Then, using that~$(h_I^n)_{(n,I)\in \mathcal{D}_{\omega}}$ is a Schauder basis of~$Y_{\omega}$, we have $\|a_I^nh_I^n\|_Y\le 2 \|x\|_Y$ for all $(n,I)\in \mathcal{D}_{\omega}$.
  Thus, it follows that
  \begin{align*}
    \|Dx\|_Y &\le \sum_{n=0}^{\infty} \sum_{I\in \mathcal{D}_{\le n}} |d_I^n| \|a_I^nh_I^n\|_Y
              \le 2\|x\|_Y\sum_{n=0}^{\infty} 2^{n+1}\cdot 8^{-1}4^{-n}\eta = \eta \|x\|_Y.\qedhere
  \end{align*}
\end{proof}

Next, we prove that if~$\mathcal{F}$ is a $\sigma$-algebra generated by sub-$\sigma$-algebras of the independent ``components'' of the space~$Y_{\omega}$, then the conditional expectation with respect to~$\mathcal{F}$, restricted to~$Y_\omega$, can be written as the sum of the individual conditional expectations. This result is standard, yet it will be crucial for proving the main results of this section. For convenience, we provide a full proof.
\begin{lem}\label{lem:cond-expect-sum}
  Let~$Y$ be a Haar system Hardy space, and let $Y_{\omega}$ and its basis $(h_I^n)_{(n,I)\in \mathcal{D}_{\omega}}$ be constructed as in \Cref{dfn:independent-sum}. Let $m\in \mathbb{N}_0$, and for every $0\le n\le m$, let $\mathcal{F}_n$ be a sub-$\sigma$-algebra of $\mathcal{H}_n := \sigma(\{ h_I^n\}_{I\in \mathcal{D}_{\le n}})$. Put $\mathcal{F} = \sigma(\mathcal{F}_0\cup \dots\cup \mathcal{F}_m)$. Then for every $x\in H_{\omega}$, we have
  \begin{equation}\label{eq:7}
    \mathbb{E}^{\mathcal{F}}x = \sum_{n=0}^m \mathbb{E}^{\mathcal{F}_n}x.
  \end{equation}
\end{lem}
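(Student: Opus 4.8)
The plan is to reduce to $\mathcal{H}_k$-measurable functions by linearity, to compute the right-hand side of~\eqref{eq:7} termwise using the independence of the $\sigma$-algebras $(\mathcal{H}_n)_n$, and to evaluate the left-hand side by invoking a standard ``collapsing'' property of conditional expectation. First I would note that both sides of~\eqref{eq:7} are linear in $x$, and that every $x\in H_{\omega}$ is a \emph{finite} sum $x = \sum_k x_k$ where $x_k = \sum_{I\in \mathcal{D}_{\le k}} a_I^k h_I^k$ is $\mathcal{H}_k$-measurable. Hence it suffices to verify~\eqref{eq:7} for a single $\mathcal{H}_k$-measurable function $x = x_k$, where $k$ ranges over all of $\mathbb{N}_0$ (not only over $\{0,\dots,m\}$). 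A point used repeatedly is that $\mathbb{E}x = 0$: indeed, each $h_I^n$ is a block $\sum_{K}\theta_K h_K$ of mean-zero Haar functions, so $x\in H_0$ has vanishing integral.

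Next I would evaluate the right-hand side. For each $n\ne k$ with $n\le m$, the function $x$ is $\mathcal{H}_k$-measurable while $\mathcal{F}_n\subset \mathcal{H}_n$; since the $\sigma$-algebras $(\mathcal{H}_j)_j$ are independent, $x$ is independent of $\mathcal{F}_n$, and therefore $\mathbb{E}^{\mathcal{F}_n}x = \mathbb{E}x = 0$. Consequently the right-hand side equals $\mathbb{E}^{\mathcal{F}_k}x$ when $k\le m$, and equals $0$ when $k > m$.

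For the left-hand side I would use the classical fact that if a random variable $x$ together with a $\sigma$-algebra $\mathcal{A}$ generates a $\sigma$-algebra independent of $\mathcal{B}$, then $\mathbb{E}^{\sigma(\mathcal{A}\cup \mathcal{B})}x = \mathbb{E}^{\mathcal{A}}x$. In the case $k\le m$, I would apply this with $\mathcal{A} = \mathcal{F}_k$ and $\mathcal{B} = \sigma\bigl(\bigcup_{0\le n\le m,\, n\ne k}\mathcal{F}_n\bigr)$: since $\sigma(x)\vee \mathcal{F}_k\subset \mathcal{H}_k$ and $\mathcal{B}\subset \sigma\bigl(\bigcup_{j\ne k}\mathcal{H}_j\bigr)$, independence of the $\mathcal{H}_j$ guarantees that $\mathcal{B}$ is independent of $\sigma(x)\vee \mathcal{F}_k$, so $\mathbb{E}^{\mathcal{F}}x = \mathbb{E}^{\mathcal{F}_k}x$, which matches the right-hand side. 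In the case $k > m$, the function $x$ is independent of the entire $\sigma$-algebra $\mathcal{F}$, whence $\mathbb{E}^{\mathcal{F}}x = \mathbb{E}x = 0$, again matching.

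The only nontrivial ingredient is the collapsing lemma for conditional expectation; it is standard and is proved by verifying $\int_{A\cap B}x = \int_{A\cap B}\mathbb{E}^{\mathcal{F}_k}x$ on the generating $\pi$-system $\{A\cap B : A\in \mathcal{F}_k,\, B\in \mathcal{B}\}$, using the independence and the tower property. I expect the main thing to keep straight to be the bookkeeping of which indices contribute, together with the essential use of $\mathbb{E}x = 0$ to annihilate the $m$ ``off-diagonal'' terms on the right-hand side.
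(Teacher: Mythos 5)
Your proof is correct, and its overall skeleton matches the paper's: reduce by linearity to $\mathcal{H}_k$-measurable pieces, use independence to force $\mathbb{E}^{\mathcal{F}_n}x = \mathbb{E}x = 0$ for $n\ne k$, and then prove the single identity $\mathbb{E}^{\mathcal{F}}x = \mathbb{E}^{\mathcal{F}_k}x$ (resp.\ $\mathbb{E}^{\mathcal{F}}x = 0$ when $k>m$). Where you genuinely differ is in how this identity is established. The paper argues by hand: it writes $h_I^k = \chi_B - \chi_C$ with $B,C\in\mathcal{H}_k$ and verifies $\int_A \chi_B = \int_A \mathbb{E}^{\mathcal{F}_k}\chi_B$ over every atom $A = A_0\cap\dots\cap A_m$ of $\mathcal{F}$, exploiting that each $\mathcal{F}_n$, being a sub-$\sigma$-algebra of the finite $\sigma$-algebra $\mathcal{H}_n$, is generated by a finite partition, so that $\mathcal{F}$ is atomic and the product formula $|A_0\cap\dots\cap A_m\cap B| = |A_k\cap B|\prod_{n\ne k}|A_n|$ follows directly from the definition of an independent family. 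You instead invoke the general collapsing lemma (if $\mathcal{B}$ is independent of $\sigma(\sigma(x)\cup\mathcal{A})$, then $\mathbb{E}^{\sigma(\mathcal{A}\cup\mathcal{B})}x = \mathbb{E}^{\mathcal{A}}x$) together with its standard $\pi$-system proof. Both routes are valid: yours is shorter if the collapsing lemma is quoted and applies to arbitrary (not necessarily finite) sub-$\sigma$-algebras, while the paper's atomic computation is fully self-contained, which is evidently why the authors included it (``This result is standard, yet \dots we provide a full proof''). One ingredient you should state explicitly: your application of the collapsing lemma, and your case $k>m$, both require the grouping property that $\mathcal{H}_k$ is independent of $\sigma\bigl(\bigcup_{j\ne k}\mathcal{H}_j\bigr)$ --- independence at the level of generated $\sigma$-algebras, not merely of events drawn one from each $\mathcal{H}_j$ --- which itself follows from the independence of the family $(\mathcal{H}_j)_j$ by another routine $\pi$-system argument; this is standard, but it is an extra step beyond the raw definition that the paper's atom-wise computation never needs.
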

\begin{proof}
  It suffices to prove the equation for~$x = h_I^k$, where $(k,I)\in \mathcal{D}_{\omega}$.
  Recall that $(\mathcal{H}_n)_{n\in \mathbb{N}_0}$ is an independent family of $\sigma$-algebras (see \Cref{rem:faithful-from-rademachers}), and we have $\mathcal{F}_n\subset \mathcal{H}_n$ for all $0\le n\le m$. Hence, if~$k > m$, then both sides of~\eqref{eq:7} vanish. Now assume that $0\le k\le m$. Since $\mathbb{E}^{\mathcal{F}_n} h_I^k = 0$ for all $n\ne k$, we have to show that
  \begin{equation*}
    \mathbb{E}^{\mathcal{F}} h_I^k = \mathbb{E}^{\mathcal{F}_k} h_I^k.
  \end{equation*}
  We can write $h_I^k = \chi_B - \chi_C$ for some sets $B,C\in \mathcal{H}_k$,
  so it suffices to prove that
  \begin{equation*}
    \mathbb{E}^{\mathcal{F}} \chi_B = \mathbb{E}^{\mathcal{F}_k} \chi_B,\qquad B\in \mathcal{H}_k.
  \end{equation*}
  To this end, let $B\in \mathcal{H}_k$, and let~$A$ be an atom of the $\sigma$-algebra~$\mathcal{F}$, i.e., $A = A_0\cap \dots\cap A_m$, where~$A_n$ is an atom of~$\mathcal{F}_n$ for every $0\le n\le m$. Then, by independence, we obtain
  \begin{equation}\label{eq:18}
    \int_A \chi_B = |A_0\cap \dots\cap A_m\cap B|
                  =  |A_k \cap B|\cdot \prod_{\substack{n=0\\ n\ne k}}^m|A_n|.
  \end{equation}
  Moreover, we have
  \begin{equation*}
    \int_A \mathbb{E}^{\mathcal{F}_k} \chi_B
    = \int_A \sum_{L\text{ atom of }\mathcal{F}_k} \biggl( \frac{1}{|L|} \int_L \chi_B \biggr) \chi_L
    = \sum_{L\text{ atom of }\mathcal{F}_k} \frac{|L\cap B|}{|L|} \int_A \chi_L.
  \end{equation*}
  If $L$ is an atom of~$\mathcal{F}_k$, then since the same is true for $A_k$, it follows that
  \begin{equation*}
    \int_A \chi_L = |A_1\cap \dots\cap (A_k\cap L)\cap \dots\cap A_m|
    =
    \begin{cases}
      0, & L\ne A_k,\\
      |A|, & L = A_k.
    \end{cases}
  \end{equation*}
  Hence,
  \begin{equation*}
    \int_A \mathbb{E}^{\mathcal{F}_k}\chi_B = \frac{|A_k \cap B|}{|A_k|}\cdot \prod_{n=0}^m |A_n|,
  \end{equation*}
  and this is equal to the right-hand side of~\eqref{eq:18}.
\end{proof}

For a first application of \Cref{lem:cond-expect-sum}, we return to diagonal operators. We provide a sufficient condition for the boundedness of certain diagonal operators with entries in~$\{ 0,1 \}$ which act on $Y_{\omega}$ similarly to dyadic stopping times.
\begin{lem}\label{lem:multiplier-zero-one}
  Let $Y = X_0(\mathbf{r})$ be a Haar system Hardy space. Moreover, let $(d_I^n)_{(n,I) \in \mathcal{D}_{\omega}}$ be a sequence in $\lbrace 0,1 \rbrace$ such that for every $n \in \mathbb{N}$ and $I\in \mathcal{D}_{\le n-1}$, if $d_I^n = 0$, then $d_{I^+}^n = d_{I^-}^n =0$. Then $Dh_I^n = d_I^n h_I^n$, $(n,I)\in \mathcal{D}_{\omega}$, extends to a bounded linear operator $D \colon Y_{\omega} \to Y_{\omega}$ with $\|D\| \leq 1$.
\end{lem}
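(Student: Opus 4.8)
The plan is to recognize that the hypothesis on $(d_I^n)$ forces $D$ to act on each component $H_\omega^n$ as a martingale-stopping operator, i.e.\ a conditional expectation, and then to transfer the contractivity of conditional expectations from the underlying Haar system space $X$ to the Hardy norm of $Y = X_0(\mathbf{r})$. Fix $n$ and set $\mathcal{T}_n = \{ I \in \mathcal{D}_{\le n} : d_I^n = 1 \}$. The contrapositive of the assumption reads ``$d_{I^\pm}^n = 1 \Rightarrow d_I^n = 1$'', so $\mathcal{T}_n$ is closed under passing to ancestors; hence along any root-to-leaf branch of the tree $\mathcal{D}_{\le n}$ the values $d^n$ are non-increasing, equal to $1$ on an initial segment and $0$ afterwards. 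Thus $\mathcal{T}_n$ is a rooted subtree, i.e.\ a dyadic stopping region, and the standard Haar multiplier $h_I \mapsto d_I^n h_I$ on $Y_n$ coincides with the conditional expectation $\mathbb{E}^{\mathcal{G}_n}$, where $\mathcal{G}_n$ is generated by the partition of $[0,1)$ into the maximal dyadic intervals not in $\mathcal{T}_n$: one checks directly that $\mathbb{E}^{\mathcal{G}_n} h_I = h_I$ for $I\in\mathcal{T}_n$ (both halves of $I$ are unions of atoms) and $\mathbb{E}^{\mathcal{G}_n} h_I = 0$ for $I\notin\mathcal{T}_n$ (then $I$ lies inside a single atom). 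Transporting $\mathcal{G}_n$ through the distribution- and inner-product-preserving isomorphism $J_n$ of \Cref{rem:isometric-isomorphism-Jn} produces a sub-$\sigma$-algebra $\mathcal{F}_n \subseteq \mathcal{H}_n$, generated by a finite partition into finite unions of dyadic intervals, with $\mathbb{E}^{\mathcal{F}_n} h_I^n = d_I^n h_I^n$ for all $I \in \mathcal{D}_{\le n}$, while $\mathbb{E}^{\mathcal{F}_n} h_I^k = 0$ for $k \ne n$ by independence of the components. Given $x \in H_\omega$ supported on components $n \le m$, I put $\mathcal{F} = \sigma(\mathcal{F}_0 \cup \dots \cup \mathcal{F}_m)$, which is again generated by a finite partition into finite unions of dyadic intervals; \Cref{lem:cond-expect-sum} then gives $\mathbb{E}^{\mathcal{F}} h_I^k = d_I^k h_I^k$, i.e.\ $\mathbb{E}^{\mathcal{F}} = D$ on $H_\omega$.

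The heart of the argument is the second step, passing from the contractivity of $\mathbb{E}^{\mathcal{F}}$ on $X$ (which is all that \Cref{lem:cond-expect} provides) to contractivity in the Hardy norm. I use the integral formula of \Cref{lem:independent-sum-norm}: writing $F(s,u) = \sum_{(n,I)} r_I^n(u) a_I^n h_I^n(s)$ and $G(s) = \int_0^1 |F(s,u)|\,\mathrm{d}u$, we have $\|x\|_Y = \|G\|_X$, whereas the same formula applied to $Dx$ involves $F_D(s,u) = \sum_{(n,I)} r_I^n(u) d_I^n a_I^n h_I^n(s)$. For each fixed $u$ the function $s \mapsto F(s,u)$ lies in $H_\omega$, and since $D = \mathbb{E}^{\mathcal{F}}$ on $H_\omega$ we get $F_D(\cdot,u) = \mathbb{E}^{\mathcal{F}}[F(\cdot,u)]$ (conditional expectation in the variable $s$). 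Conditional Jensen gives $|F_D(\cdot,u)| \le \mathbb{E}^{\mathcal{F}} |F(\cdot,u)|$ pointwise; integrating over $u$ and using that $\mathbb{E}^{\mathcal{F}}$ acts only in $s$, so it commutes with $\int_0^1 \cdot\,\mathrm{d}u$ by Tonelli, yields the pointwise bound $\int_0^1 |F_D(s,u)|\,\mathrm{d}u \le \mathbb{E}^{\mathcal{F}} G(s)$. Both sides are non-negative functions in $H$, so the monotonicity of the norm (\Cref{pro:HS-1d}~\eqref{pro:HS-1d:v}) together with \Cref{lem:cond-expect} gives $\|Dx\|_Y = \bigl\|\int_0^1 |F_D(\cdot,u)|\,\mathrm{d}u\bigr\|_X \le \|\mathbb{E}^{\mathcal{F}} G\|_X \le \|G\|_X = \|x\|_Y$. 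Since $H_\omega$ is dense in $Y_\omega$, $D$ extends to a bounded operator with $\|D\| \le 1$.

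The main obstacle I anticipate is exactly this transfer to the Hardy norm: a $0/1$ Haar multiplier is in general \emph{unbounded} on $X$ (already on $L^1$), so the conclusion genuinely relies on the stopping-time hypothesis that identifies $D$ with a conditional expectation, and on the decoupling of the $u$-average from the $s$-conditional expectation. The combinatorial bookkeeping of the first step---verifying that the stopping region produces precisely the prescribed $0/1$ diagonal and that the transported algebras $\mathcal{F}_n$ are honest sub-$\sigma$-algebras of $\mathcal{H}_n$ generated by finite unions of dyadic intervals---is routine but must be carried out with enough care that \Cref{lem:cond-expect-sum} and \Cref{lem:cond-expect} are both applicable.
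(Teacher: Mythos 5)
Your proof is correct, and its first half---representing $D$ on $H_\omega$ as a conditional expectation $\mathbb{E}^{\mathcal{F}}$ with $\mathcal{F}=\sigma(\mathcal{F}_0\cup\dots\cup\mathcal{F}_m)$, $\mathcal{F}_n\subset\mathcal{H}_n$, via the stopping-region partition and \Cref{lem:cond-expect-sum}---is exactly the paper's construction (the paper builds the partition directly on the $h_I^n$ side, as the sets $M_n(J)=\Gamma_n(J)\cup\bigcup\{\operatorname{supp}h_I^n : J\subset I,\ d_I^n=0\}$, rather than transporting it through $J_n$, but these are the same $\sigma$-algebras). Where you genuinely diverge is in how the norm bound is concluded. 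The paper splits into cases: if $\mathbf{r}$ is independent, the result is immediate from $1$-unconditionality of $(h_I^n)_{(n,I)}$ (\Cref{lem:schauder-basis}), since $D$ is a $\{0,1\}$-multiplier; the conditional-expectation argument is run \emph{only} in the constant case, where $\|\cdot\|_Y=\|\cdot\|_X$ and \Cref{lem:cond-expect} applies verbatim, with no transfer needed. You instead give a unified argument covering both cases at once: using the integral formula of \Cref{lem:independent-sum-norm}, you observe that for each fixed $u$ the function $F(\cdot,u)$ lies in $H_\omega$ with the same components as $x$, so $F_D(\cdot,u)=\mathbb{E}^{\mathcal{F}}F(\cdot,u)$, and then conditional Jensen plus the Tonelli interchange (legitimate since $\mathbb{E}^{\mathcal{F}}$ is a finite sum of averages over atoms) gives the pointwise bound $\int_0^1|F_D(\cdot,u)|\,\mathrm{d}u\le\mathbb{E}^{\mathcal{F}}G$; monotonicity of $\|\cdot\|_X$ (\Cref{pro:HS-1d}~\eqref{pro:HS-1d:v}) and \Cref{lem:cond-expect} finish the estimate. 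This is a correct and somewhat stronger observation---it shows the stopping-time multiplier is contractive on the Hardy norm \emph{directly}, without invoking unconditionality---at the cost of the extra Jensen/Tonelli step; the paper's case split is shorter because in the only case where the transfer would be needed, unconditionality makes the whole statement trivial.
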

\begin{proof}
  If $\mathbf{r}$ is independent, then the 1-unconditionality of the basis~$(h_I^n)_{(n,I)\in \mathcal{D}_{\omega}}$ of~$Y_{\omega}$ implies the result. Thus, it is sufficient to prove the result for the case where $\mathbf{r}$ is constant. To this end, let $x = \sum_{n=0}^{N} \sum_{I \in \mathcal{D}_{\leq n}} a_{I}^{n} h_{I}^{n}\in H_{\omega}$ for some $N\in \mathbb{N}$. We will construct an appropriate $\sigma$-algebra $\mathcal{F}$ such that $Dx = \mathbb{E}^{\mathcal{F}}x$. For every $0\le n\le N$ and $I\in \mathcal{D}_n$, let $\Gamma_n(I^{\pm }) = [h_I^n = \pm 1]$. Moreover, for $0\le n\le N$ and $J\in \mathcal{D}_{n+1}$, put $M_n(J)= \Gamma_n(J) \cup \bigcup \{ \supp h_I^n : J\subset I,\, d_I^n = 0 \}$ and observe that $\mathcal{M}_n = \{ M_n(J) : J\in \mathcal{D}_{n+1} \}$ is a partition of $[0,1)$. Next, for every $0\le n\le N$, let $\mathcal{F}_n$ be the $\sigma$-algebra generated by $\mathcal{M}_N$, and observe that $\mathcal{F}_n \subset \mathcal{H}_n$.
  Finally, put $\mathcal{F} = \sigma(\mathcal{F}_0\cup \dots\cup \mathcal{F}_N)$. Then \Cref{lem:cond-expect-sum} implies that $\mathbb{E}^{\mathcal{F}}x = \sum_{n=0}^N \sum_{I \in \mathcal{D}_{\leq n}} a_I^n \mathbb{E}^{\mathcal{F}_{n}} h_I^n$. For every  $0 \leq n \leq N $ and $I \in \mathcal{D}_{\leq n}$, the same arguments as in the proof of \cite[Lemma~2.3.8]{speckhofer:2025} show that that if $d_{I}^{n} = 0$, then $\mathbb{E}^{\mathcal{F}_{n}}h_{I}^{n} = 0$, and if  $d_{I}^{n} = 1$, then $\mathbb{E}^{\mathcal{F}_{n}}h_{I}^{n} = h_{I}^{n}$ (since $h_{I}^{n}$ is measurable with respect to $\mathcal{F}_{n}$). Thus, $Dx = \mathbb{E}^{\mathcal{F}}x$, and the conclusion follows from \Cref{lem:cond-expect}.
\end{proof}

Next, we introduce \emph{faithful} and \emph{almost faithful Haar systems in~$Y_{\omega}$}. They are constructed from a sequence of \emph{finite} (almost) faithful Haar systems in~$Y$, which are combined to an infinite system in~$Y_{\omega}$ using the isometries~$J_n\colon Y_n \to H_{\omega}^n$, $n\in \mathbb{N}_0$, from \Cref{rem:isometric-isomorphism-Jn}.

\begin{dfn}\label{dfn:faithful-system-independent-sum}
Let $N\colon \mathbb{N}_0\to \mathbb{N}_0$ be a strictly increasing function, and for every $n\in \mathbb{N}_0$, let $(\tilde{h}_I^n)_{I\in \mathcal{D}_{\le n}}$ be a finite almost faithful Haar system in $Y_{N(n)}$ (note that $N(n)\ge n$). For every $n\in \mathbb{N}_0$, we can apply the isometric isomorphism $J_{N(n)}$ defined in \Cref{rem:isometric-isomorphism-Jn} to obtain functions $\tilde{b}_I^n = J_{N(n)} \tilde{h}_I^n \in  H_{\omega}$, $I\in \mathcal{D}_{\le n}$. We call the resulting system $(\tilde{b}_I^n)_{(n,I)\in \mathcal{D}_{\omega}}$ an \emph{almost faithful Haar system in~$Y_{\omega}$}. If all systems $(\tilde{h}_I^n)_{I\in \mathcal{D}_{\le n}}$ are faithful, we call $(\tilde{b}_I^n)_{(n,I)\in \mathcal{D}_{\omega}}$ a \emph{faithful Haar system in $Y_{\omega}$}, and we will usually denote it as $(b_I^n)_{(n,I)\in \mathcal{D}_{\omega}}$.
\end{dfn}
\begin{rem}
  Suppose that for every $n\in \mathbb{N}_0$, the system $(\tilde{h}_I^n)_{I\in \mathcal{D}_{\le n}}$ is defined as
  \begin{equation*}
    \tilde{h}_I^n = \sum_{K\in \mathcal{B}_I^n} \theta_K^{N(n)} h_K, \qquad I\in \mathcal{D}_{\le n}
  \end{equation*}
  where $(\mathcal{B}_I^n)_{I\in \mathcal{D}_{\le n}}$ is a family of finite subcollections of $\mathcal{D}_{\le N(n)}$ and $\theta_K^m\in \{ \pm 1 \}$ for all $(m,K)\in \mathcal{D}_\omega$. Then the resulting system $(\tilde{b}_I^n)_{(n,I)\in \mathcal{D}_{\omega}}$ is given by
  \begin{equation*}
    \tilde{b}_I^n = \sum_{K\in \mathcal{B}_I^n} \theta_K^{N(n)} h_K^{N(n)},\qquad (n,I)\in \mathcal{D}_{\omega}.
  \end{equation*}
\end{rem}

\subsection{Operators associated with a faithful Haar system}

In the following theorem we define the fundamental operators $\hat{A}$ and~$\hat{B}$ associated with a faithful Haar system in~$Y_{\omega}$, constructed as in \Cref{dfn:faithful-system-independent-sum}, and we prove that they are bounded. Moreover, we will see that the operators satisfy $\hat{A}\hat{B} = I_{Y_{\omega}}$, which implies that $\hat{B} \hat{A}$ is a projection onto the closed linear span of the functions $b_I^n = \hat{B} h_I^n$, $(n,I)\in \mathcal{D}_{\omega}$. Our proof is based on \cite[Proposition~7.1]{MR4894823}, which we transfer to the setting of independent sums, utilizing \Cref{lem:cond-expect-sum}, our result on sums of conditional expectations.

\begin{thm}\label{thm:operators-A-B}
  Let $Y$ be a Haar system Hardy space, and let $(b_I^n)_{(n,I) \in \mathcal{D}_{\omega}}$ be a faithful Haar system in~$Y_{\omega}$ (see~\Cref{dfn:faithful-system-independent-sum}). Define the operators $\hat{A},\hat{B} \colon Y_{\omega}\to  Y_{\omega}$ by
\begin{equation}\label{eq:15}
  \hat{B}x =  \sum_{(n,I) \in \mathcal{D}_{\omega}} \frac{\langle h_I^n, x \rangle}{\vert I \vert} b_I^n
  \qquad \text{and} \qquad
  \hat{A}x =  \sum_{(n,I) \in \mathcal{D}_{\omega}}   \frac{\langle b_I^n, x \rangle}{\vert I \vert} h_I^n.
\end{equation}
Then we have $\hat{A}\hat{B} = I_{Y_{\omega}}$ and $\| \hat{A} \| =  \| \hat{B} \| =1$.
\end{thm}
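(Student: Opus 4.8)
The plan is to deduce the two identities from three ingredients: a pair of orthogonality relations that pin down the action of $\hat{A}$ and $\hat{B}$ on basis vectors, a distributional argument showing that $\hat{B}$ is an isometry, and the identification of $\hat{B}\hat{A}$ with a single conditional expectation via \Cref{lem:cond-expect-sum}. First I would record the orthogonality relations $\langle h_I^n, h_J^m \rangle = \langle b_I^n, b_J^m \rangle = |I|\,\delta_{(n,I),(m,J)}$. Indeed, since the $\sigma$-algebras $\mathcal{H}_n$ are independent and every $h_I^n$ and every $b_I^n$ has mean zero, functions living in distinct components are orthogonal (for the $b$'s one uses that $b_I^n$ is $\mathcal{H}_{N(n)}$-measurable and that $N$ is injective); within a fixed component, both $(h_I^n)_{I\in\mathcal{D}_{\le n}}$ and $(b_I^n)_{I\in\mathcal{D}_{\le n}}$ are distributional copies of the standard Haar system, whence $\int h_I^nh_J^n = \int b_I^nb_J^n = |I|\,\delta_{IJ}$. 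These relations give immediately $\hat{B}h_J^m = b_J^m$ and $\hat{A}b_J^m = h_J^m$, so $\hat{A}\hat{B}h_J^m = h_J^m$ for every basis vector; once boundedness is established, $\hat{A}\hat{B} = I_{Y_{\omega}}$ follows on $H_{\omega}$ and hence on all of~$Y_{\omega}$ by density.

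Next I would prove that $\hat{B}$ is an isometry. On $H_{\omega}$ it acts by $\hat{B}\bigl(\sum a_I^nh_I^n\bigr) = \sum a_I^nb_I^n$, so it suffices to show $\|\sum a_I^nb_I^n\|_Y = \|\sum a_I^nh_I^n\|_Y$. This is exactly the distributional-invariance phenomenon already exploited in \Cref{rem:dfn-does-not-depend-on-choice}: the families $(h_I^n)_{(n,I)}$ and $(b_I^n)_{(n,I)}$ have the same joint distribution, because each $(b_I^n)_I$ is a distributional copy of the standard Haar system inside the independent component $\mathcal{H}_{N(n)}$ and the components are independent in both families. Inspecting the norm formula of \Cref{lem:independent-sum-norm}, one sees that for each fixed $s$ the inner integral over~$u$ depends only on the multiset of absolute coefficients attached to the functions that do not vanish at $s$ (the relevant Rademacher variables are either all equal, when $\mathbf{r}$ is constant, or all distinct and hence independent, when $\mathbf{r}$ is independent—here the disjointness of the collections $\mathcal{B}_I^n$ is used). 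Thus $\|\cdot\|_Y$ depends only on the joint distribution, so $\hat{B}$ is an isometry and $\|\hat{B}\| = 1$; combined with $\hat{A}\hat{B} = I$ this already forces $\|\hat{A}\| \ge 1$.

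It remains to prove $\|\hat{A}\| \le 1$, which is the heart of the matter and the point where the independent-sum structure genuinely intervenes. Writing $\hat{A}x = \sum c_I^nh_I^n$ with $c_I^n = \langle b_I^n, x \rangle/|I|$, the isometry of $\hat{B}$ yields $\|\hat{A}x\|_Y = \|\hat{B}\hat{A}x\|_Y = \|Px\|_Y$, where $P := \hat{B}\hat{A}$ is the projection $Px = \sum_{(n,I)} \tfrac{\langle b_I^n, x\rangle}{|I|}b_I^n$. Grouping by~$n$, the inner sum over $I\in\mathcal{D}_{\le n}$ is the $L^2$-orthogonal projection onto $\operatorname{span}\{b_I^n\}$, which on mean-zero functions coincides with the conditional expectation $\mathbb{E}^{\mathcal{G}_n}$ for $\mathcal{G}_n := \sigma(b_I^n : I\in\mathcal{D}_{\le n})\subset\mathcal{H}_{N(n)}$; here the \emph{faithful} (as opposed to merely almost faithful) hypothesis is essential, since it guarantees that the supports of the $b_I^n$ partition $[0,1)$, so that $\{b_I^n\}\cup\{\chi_{[0,1)}\}$ exhausts the atoms of $\mathcal{G}_n$. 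Applying \Cref{lem:cond-expect-sum} to these $\mathcal{G}_n$ (taking the trivial $\sigma$-algebra in the components not of the form $N(n)$) then identifies $P = \mathbb{E}^{\mathcal{G}}$ on $H_{\omega}$, with $\mathcal{G} = \sigma\bigl(\bigcup_n\mathcal{G}_n\bigr)$. This is precisely the difficulty flagged before \Cref{lem:diagonal-operator-bounded}: a direct sum of uniformly bounded component projections need not be bounded on $Y_{\omega}$, but recognizing $P$ as a \emph{single} conditional expectation makes the contraction property of \Cref{lem:cond-expect} (in its Haar system Hardy space form, as transferred from \cite{MR4894823}) available, giving $\|Px\|_Y \le \|x\|_Y$ and hence $\|\hat{A}\|\le 1$. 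Together with the lower bound this gives $\|\hat{A}\| = \|\hat{B}\| = 1$, and the argument concludes by extending the identities from $H_{\omega}$ to $Y_{\omega}$. I expect the main obstacle to be exactly this last step, namely turning the componentwise projection into one conditional expectation to which a contraction estimate applies, rather than the essentially bookkeeping parts in the first two paragraphs.
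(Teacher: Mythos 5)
Your first two paragraphs are essentially correct and coincide with what the paper does (the biorthogonality relations giving $\hat{A}\hat{B}h_J^m = h_J^m$, and the isometry of $\hat{B}$ via the Rademacher-regrouping/equidistribution computation). Your third paragraph also starts on the paper's track: reducing everything to $\|\hat{B}\hat{A}x\|_Y \le \|x\|_Y$ and identifying $\hat{B}\hat{A}$ on $H_{\omega}$ with a single conditional expectation $\mathbb{E}^{\mathcal{G}}$ via \Cref{lem:cond-expect-sum} is exactly the mechanism behind~\eqref{eq:20} and~\eqref{eq:21} in the paper's proof.

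The gap is your final step. You invoke ``the contraction property of \Cref{lem:cond-expect} in its Haar system Hardy space form, as transferred from \cite{MR4894823}.'' No such form exists, and it cannot exist, because the statement is false: \Cref{lem:cond-expect} (quoted by the paper from \cite[Lemma~4.3]{MR4894823}) concerns Haar system \emph{spaces}~$X$, i.e.\ rearrangement-invariant norms, and conditional expectations with respect to partitions into finite unions of dyadic intervals are \emph{not} contractive on $Y = X_0(\mathbf{r})$ when $\mathbf{r}$ is independent. A concrete counterexample in dyadic $H^1$ (i.e.\ $X = L^1$, $\mathbf{r}$ independent): let $\mathcal{F}$ be generated by the partition $\bigl\{[\tfrac18,\tfrac14),\ [0,\tfrac18)\cup[\tfrac14,1)\bigr\}$ and let $f = h_{[0,\frac12)} - h_{[0,\frac14)}$. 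Then $\|f\|_Y = \tfrac12$, while
\begin{equation*}
  \mathbb{E}^{\mathcal{F}}f = \tfrac{2}{7}\bigl(h_{[0,1)} + 2h_{[0,\frac12)} - 4h_{[0,\frac14)}\bigr),
  \qquad
  \|\mathbb{E}^{\mathcal{F}}f\|_Y = \tfrac14\cdot\tfrac87 + \tfrac14\cdot\tfrac47 + \tfrac12\cdot\tfrac27 = \tfrac47 > \tfrac12,
\end{equation*}
where the three terms come from integrating $s\mapsto\int_0^1|\cdots|\,\mathrm{d}u$ over $[0,\tfrac14)$, $[\tfrac14,\tfrac12)$ and $[\tfrac12,1)$, using $\mathbb{E}|\varepsilon_0+2\varepsilon_1+4\varepsilon_2| = 4$ and $\mathbb{E}|\varepsilon_0+2\varepsilon_1| = 2$ for independent signs. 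So the contraction you need is not a quotable fact about conditional expectations on~$Y$; that the \emph{particular} operators $\mathbb{E}^{\mathcal{G}}$ arising from faithful Haar systems are contractive on~$H_{\omega}$ is precisely the nontrivial content of the theorem. The paper proves it by never applying $\mathbb{E}^{\mathcal{G}}$ to $x$ as an element of~$Y$: it first rewrites $\|x\|_Y$ via \Cref{lem:independent-sum-norm}, regroups the Rademacher signs so that each block $\sum_{K\in\mathcal{B}_I^n}a_K^{N(n)}\theta_K^{N(n)}h_K^{N(n)}$ carries the single sign $r_I^n(u)$ (splitting off the remaining coordinates with an independent second variable~$v$), and only then applies the conditional expectation to the integrand as a function of~$s$ for fixed $(u,v)$, combining Jensen's inequality pointwise, Fubini, the monotonicity of $\|\cdot\|_X$ from \Cref{pro:HS-1d}, and the $X$-contraction of \Cref{lem:cond-expect}. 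The regrouping is what makes the resulting expression recognizable as $\|\hat{B}\hat{A}x\|_Y$; this mechanism is exactly the ``heart of the matter'' you flag, and your proposal replaces it with a citation to a lemma that does not hold.
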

\begin{proof}
Write $Y = X_0(\mathbf{r})$, where~$X$ is a Haar system space and~$\mathbf{r}$ is either constant or independent.
Let the faithful Haar system $(b_I^n)_{(n,I) \in \mathcal{D}_{\omega}}$ be given by
\begin{equation*}
  b_I^n = \sum_{K \in \mathcal{B}_{I}^{n}} \theta_K^{N(n)} h_K^{N(n)},
  \qquad (n,I)\in \mathcal{D}_{\omega},
\end{equation*}
where $N\colon \mathbb{N}_0\to \mathbb{N}_0$ is a strictly increasing function, $\mathcal{B}_I^n\subset \mathcal{D}_{\le N(n)}$ for all $(n,I)\in \mathcal{D}_{\omega}$, and $(\theta_K^m)_{(m,K)\in \mathcal{D}_{\omega}}\in \{ \pm 1 \}^{\mathcal{D}_{\omega}}$.

First, we prove that the operator $\hat{B}$ is an isometry. To this end, let $x =  \sum_{ (n,I) \in \mathcal{D}_{\omega} } a_I^nh_I^{n} \in  H_{\omega}$. Then we have
\begin{equation*}
  \hat{B}x =  \sum_{(n,I) \in \mathcal{D}_{\omega}}  a_I^n b_I^n.
\end{equation*}
Using \Cref{lem:independent-sum-norm}, we obtain
\begin{equation}\label{eq:3}
\| \hat{B}x \|_{X(r)} = \Big\| s \mapsto \int_0^1 \Big|  \sum_{(n,I) \in \mathcal{D}_{\omega}} a_I^n  \sum_{K \in \mathcal{B}_I^n} \theta_K^{N(n)} r_K^{N(n)}(u) h_K^{N(n)}(s) \Big| du \Big\|_X.
\end{equation}
Now, for fixed $s\in [0,1)$ and $(n,I)\in \mathcal{D}_{\omega}$, there is at most one interval $K\in \mathcal{B}_I^n$ such that $s\in \operatorname{supp} (h_K^{N(n)})$. Hence, we may replace $r_K^{N(n)}(u)$ with $r_I^n(u)$ in~\eqref{eq:3}, which yields
\begin{align*}
  \| \hat{B}x \|_{X(r)}
  &= \Big\| s \mapsto \int_0^1 \Big|  \sum_{(n,I) \in \mathcal{D}_{\omega} }  a_I^n r_I^n(u) \sum_{K \in \mathcal{B}_I^n} \theta_K^{N(n)} h_K^{N(n)}(s) \Big| du \Big\|_X\\
  &=  \Big\| s \mapsto \int_0^1 \Big|  \sum_{(n,I) \in \mathcal{D}_{\omega} } a_I^n r_I^n(u) b_I^n(s) \Big| du \Big\|_X\\
  &= \Big\| s \mapsto \int_0^1 \Big|   \sum_{(n,I) \in \mathcal{D}_{\omega} }  a_I^n r_I^n(u) h_I^n(s) \Big| du \Big\|_X = \| x \|_{X(r)},
\end{align*}
where in the last step, we exploited that the systems $(b_I^n)_{(n,I)\in \mathcal{D}_{\omega}}$ and $(h_I^n)_{(n,I)\in \mathcal{D}_{\omega}}$ have the same joint distribution.

Next, we prove that $\hat{A}$ is bounded. Again, we fix $x = \sum_{ (n,I) \in \mathcal{D}_{\omega} }  a_I^nh_I^n \in H_{\omega}$. Let $m \in \mathbb{N}_0$ be sufficiently large such that $a_K^{N(n)}=0$ for every $n > m$ and $K \in \mathcal{D}_{\le N(n)}$ (hence, $\langle b_I^n, x \rangle = 0$ for all $n > m$ and~$I\in \mathcal{D}_{\le n}$). For every $n \in \{ 0,\dots,m \}$, put
\begin{equation*}
  \mathcal{F}_n = \sigma (\{b_I^n\}_{I \in \mathcal{D}_n}) \subset \mathcal{H}_{N(n)},
\end{equation*}
where $\mathcal{H}_{N(n)}$ is the $\sigma$-algebra generated by $\{h_I^{N(n)}\}_{I\in \mathcal{D}_{\le N(n)}}$.
Now fix $n\in \{ 0,\dots,m \}$. Since $(\mathcal{H}_l)_{l=0}^\infty$ is an independent family of $\sigma$-algebras, it follows that for every $(l,K)\in \mathcal{D}_{\omega}$ with $l\ne N(n)$, we have
\begin{equation}\label{eq:20}
  \mathbb{E}^{\mathcal{F}_n} h_K^l = 0.
\end{equation}
On the other hand, for $K\in \mathcal{D}_{\le N(n)}$, we claim that
\begin{equation}\label{eq:21}
 \mathbb{E}^{\mathcal{F}_n} h_K^{N(n)} =
    \begin{cases}
      0, &  K \notin \bigcup_{I \in \mathcal{D}_{\leq n}} \mathcal{B}_I^n \\
      \theta_K^{N(n)} \frac{|K|}{|I|}b_I^n, & K \in \mathcal{B}_I^n, I \in \mathcal{D}_{\leq n }.
    \end{cases}
\end{equation}
We prove the above claim. For every $J \in \mathcal{D}_n$, let $\Gamma_{J^{\pm }} = [b_J^n = \pm 1]$. We have the expansion
\begin{equation}\label{eq:23}
  \chi_{\Gamma_I} = |I| \chi_{[0,1)} + \sum_{\substack{J \in \mathcal{D}_{\leq n}\\ J \supset I}} \frac{|I|}{|J|} h_J(I) b_J^n,\qquad I\in \mathcal{D}_{n+1},
\end{equation}
where $h_J(I)$ is the value that~$h_J$ assumes on~$I$. This is analogous to the corresponding expansion of~$\chi_I$ in the original Haar system (see also~\cite[Proposition~3.5.1.]{speckhofer:2025}).
Now let $K \in \mathcal{B}_{J_0}^n$, where $J_0 \in \mathcal{D}_{\leq n}$. For every $I \in \mathcal{D}_{n+1}$ with $I \subset J_0$, we have
\begin{equation*}
\langle \chi_{\Gamma_{I}}, h_K^{N(n)} \rangle = \sum_{\substack{J \in \mathcal{D}_{\leq n}\\ J \supset I}} \frac{|I|}{|J|} h_J(I) \langle b_J^n, h_K^{N(n)} \rangle = \frac{\vert I \vert}{\vert J_0 \vert} h_{J_0}(I) \theta_K^{N(n)} \vert K \vert.
\end{equation*}
Moreover, if $I\in \mathcal{D}_{n+1}$ and $I\not\subset J_0$, then the functions~$\chi_{\Gamma_I}$ and~$h_K^{N(n)}$ are disjointly supported. This yields
\begin{equation*}
\mathbb{E}^{\mathcal{F}_n} h_K^{N(n)} = \sum_{\substack{I \in \mathcal{D}_{n+1}\\ I \subset J_0}} \frac{1}{|I|} \langle \chi_{\Gamma_I}, h_K^{N(n)} \rangle \chi_{\Gamma_I} = \theta_K^{N(n)}\frac{|K|}{|J_0|} \sum_{\substack{I\in \mathcal{D}_{n+1}\\ I\subset J_0}} h_{J_0}(I)\chi_{\Gamma_I} = \theta_K^{N(n)} \frac{\vert K \vert}{|J_0|} b_{J_0}^n.
\end{equation*}
Finally, if $K \notin \bigcup_{I \in \mathcal{D}_{\leq n}} \mathcal{B}_I^n$, then we have $\mathbb{E}^{\mathcal{F}_n} h_K^{N(n)} = 0$ since, by~\eqref{eq:23}, $\langle \chi_{\Gamma_I}, h_K^{N(n)} \rangle = 0$ for all~$I\in \mathcal{D}_{n+1}$. This completes the proof of~\eqref{eq:21}.

Next, we define
\begin{equation*}
  \mathcal{C} = \mathcal{D}_{\omega}\setminus \bigl\{ (N(n),K) : 0\le n\le m,\ I\in \mathcal{D}_{\le n},\ K\in \mathcal{B}_I^n  \bigr\}
\end{equation*}
and split the function~$x$ into two parts accordingly (if~$\mathbf{r}$ is constant, replace $r_K^{N(n)}(u)$ and~$r_K^n(u)$ by~$1$ in the following computations). This yields
\begin{align*}
  \| x \|_{X(r)} = \Big\| s \mapsto \int_0^1 \Big|  \sum_{n=0}^m \sum_{I \in \mathcal{D}_{\leq n}}  \sum_{K \in \mathcal{B}_I^n} a_K^{N(n)}  r_K^{N(n)}(u) h_K^{N(n)}(s) + \sum_{(l,K) \in \mathcal{C}} a_K^l r_K^l(u) h_K^l(s) \Big|\, \mathrm{d}u \Big\|_X&\\
                = \Big\| s \mapsto \int_0^1 \int_{0}^{1} \Big|  \sum_{n=0}^m \sum_{I \in \mathcal{D}_{\leq n}} r_I^n(u) \sum_{K \in \mathcal{B}_I^n} a_K^{N(n)} h_K^{N(n)}(s) + \sum_{(l,K) \in \mathcal{C}} a_K^l r_K^l(v) h_K^l(s) \Big|\, \mathrm{d}u\, \mathrm{d}v \Big\|_X,&
\end{align*}
where we again replaced $r_K^{N(n)}(u)$ with $r_I^n(u)$ in the first sum.
Define $\mathcal{F} = \sigma(\mathcal{F}_1\cup \dots\cup \mathcal{F}_m)$. Using Jensen's inequality for conditional expectations, \Cref{lem:cond-expect} and \Cref{pro:HS-1d}~\eqref{pro:HS-1d:v}, we obtain
\begin{align*}
  \| x \|_{X(r)} \geq \Big\| s \mapsto \int_0^1 \int_0^1 \Big| &\sum_{n=0}^m \sum_{I \in \mathcal{D}_{\leq n}} r_I^n(u) \sum_{K \in \mathcal{B}_I^n} a_K^{N(n)} (\mathbb{E}^{\mathcal{F}} h_{K}^{N(n)})(s)\\
 &\qquad + \sum_{(l,K) \in \mathcal{C}} a_K^l r_K^l(v) (\mathbb{E}^{\mathcal{F}} h_K^l)(s) \Big|\, \mathrm{d}u\, \mathrm{d}v \Big\|_{X}.
\end{align*}
By \Cref{lem:cond-expect-sum} and \eqref{eq:20},~\eqref{eq:21}, this implies that
\begin{equation*}
  \| x \|_{X(r)} \geq \Big\| s \mapsto \int_0^1 \Big|  \sum_{n=0}^m \sum_{I \in \mathcal{D}_{\leq n}} r_I^n(u) \sum_{K \in \mathcal{B}_I^n} a_K^{N(n)} (\mathbb{E}^{\mathcal{F}_n} h_K^{N(n)})(s) \Big|\, \mathrm{d}u  \Big\|_X.
\end{equation*}
Again, from~\eqref{eq:21} we obtain that
\begin{align*}
  \| x \|_{X(r)} &\geq  \Big\| s \mapsto \int_0^1 \Big|  \sum_{n=0}^m \sum_{I \in \mathcal{D}_{\leq n}} r_I^n(u)  \sum_{K \in \mathcal{B}_I^n} a_K^{N(n)} \theta_K^{N(n)} \frac{|K|}{|I|}\,  b_I^n(s) \Big|\, \mathrm{d}u \Big\|_X\\
  &= \Big\| s \mapsto \int_0^1 \Big|  \sum_{n=0}^m \sum_{I \in \mathcal{D}_{\leq n}} r_I^n(u) \frac{\langle b_I^n,x  \rangle}{|I|}  \sum_{K \in \mathcal{B}_I^n} \theta_K^{N(n)} h_K^{N(n)}(s) \Big|\, \mathrm{d}u \Big\|_X
\end{align*}
Changing~$r_I^n(u)$ back to~$r_K^{N(n)}(u)$, we get
\begin{align*}
  \| x \|_{X(r)} &\geq  \Big\| s \mapsto \int_0^1 \Big|  \sum_{n=0}^m \sum_{I \in \mathcal{D}_{\leq n}} \frac{\langle b_I^n,x  \rangle}{|I|}  \sum_{K \in \mathcal{B}_I^n}  \theta_K^{N(n)} r_K^{N(n)}(u)  h_K^{N(n)}(s) \Big| du \Big\|_X\\
                &= \| \hat{B}\hat{A}x \|_{X(r)} = \|  \hat{A}x \|_{X(r)},
\end{align*}
as desired. Thus, $\hat{A},\hat{B}$ extend to bounded linear operators on~$Y_{\omega}$, where both series in~\eqref{eq:15} converge in norm (cf.~\cite[Proposition~3.5.1]{speckhofer:2025}).
\end{proof}

\subsection{Operators associated with an almost faithful Haar system}

Finally, we prove that the operators $A$ and~$B$ associated with an almost faithful Haar system in~$Y_{\omega}$ are bounded (\Cref{thm:operators-A-B-almost-faithful}). To this end, we use the following result which states that any finite almost faithful Haar system in~$Y$ can be extended to a faithful Haar system in~$Y$.

\begin{lem}\label{lem:almost-faithful-to-faithful}
  Let $n,N\in \mathbb{N}_0$, and let $(\tilde{h}_I)_{I\in \mathcal{D}_{\le n}}$ be a finite almost faithful Haar system in~$Y_N$. Then there exists a finite faithful Haar system~$(\hat{h}_I)_{I\in \mathcal{D}_{\le n}}$ in~$Y_{N+n}$ and a Haar multiplier $R\colon Y_{N+n}\to Y_{N+n}$, $R h_K = \rho_K h_K$, with $\rho_K\in \{ 0,1 \}$ for all $K\in \mathcal{D}_{\le N+n}$, such that the following hold:
  \begin{enumerate}[(i)]
    \item\label{lem:almost-faithful-to-faithful:i} For every $K\in \mathcal{D}_{< N + n}$, if $\rho_K = 0$, then $\rho_{K^{\pm }} = 0$.
    \item\label{lem:almost-faithful-to-faithful:ii} If $K\notin \mathcal{D}_{\le N}$, then $\rho_K = 0$.
    \item\label{lem:almost-faithful-to-faithful:iii} $R \hat{h}_I = R \tilde{h}_I = \tilde{h}_I$ for all $I\in \mathcal{D}_{\le n}$.
  \end{enumerate}
\end{lem}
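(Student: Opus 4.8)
The plan is to build $(\hat h_I)_{I\in\mathcal D_{\le n}}$ by a top-down recursion on the tree $\mathcal D_{\le n}$, preserving the original structure and filling the gaps with fresh Haar functions at finer scales, and then to read off $R$ directly from the resulting decomposition. Writing $\tilde h_I=\sum_{K\in\mathcal B_I}\theta_K h_K$ with $\mathcal B_I\subset\mathcal D_{\le N}$, at each node $I=(I')^{\pm}$ I would define $\hat h_I$ to be supported on the entire parent level set $[\hat h_{I'}=\pm1]$, to coincide with $\tilde h_I$ on the covered set $\mathcal B_I^{*}$, and to be filled in on the complementary gap. That complement is a finite union of dyadic intervals, and on each such gap interval $J$ I would put the single Haar function $h_J$ and then continue the subdivision below $J$ by $h_{J^{\pm}}$, and so on. By construction every node fills the whole level set of its parent, so the system is faithful in the sense of \Cref{dfn:faithful}: condition~(ii) holds because the supports of the $\hat h_I$ partition the level sets exactly, and condition~(i) holds because distinct nodes live on disjoint level sets while the supporting collections of distinct functions are automatically distinct.

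For the multiplier I would set $\rho_K=1$ exactly when $K$ is an ancestor of, or equal to, some interval in $\bigcup_{I\in\mathcal D_{\le n}}\mathcal B_I$, and $\rho_K=0$ otherwise. Then property~(i) is immediate: if $K$ is not an ancestor of any original, neither is either child, so $\rho_K=0$ forces $\rho_{K^{\pm}}=0$. Property~(ii) holds because the originals, and hence all of their ancestors, lie in $\mathcal D_{\le N}$. For property~(iii) the key point is that no filler interval is an ancestor of an original: each filler interval either sits in a region disjoint from the support of a given original, or lies strictly inside one of the originals belonging to an ancestor node, and in neither case can it contain an original. Hence $\rho$ vanishes on the entire filler part of $\hat h_I$ while fixing the original part, giving $R\hat h_I=\tilde h_I$; and $R\tilde h_I=\tilde h_I$ since $\rho_K=1$ for $K\in\mathcal B_I$.

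The main obstacle is the scale bound, namely that every Haar function occurring in $(\hat h_I)$ has scale at most $N+n$, so that the system indeed lies in $Y_{N+n}$. I would isolate this in the claim that \emph{every gap interval arising at level $i$ has scale at most $N+i$}, proved by induction on $i$. The base case $i=0$ uses that the complement in $[0,1)$ of a union of dyadic intervals of scale $\le N$ is again a union of dyadic intervals of scale $\le N$. For the inductive step, the level set $[\hat h_{I'}=\pm1]$ feeding into level $i$ is built from halves of originals (scale $\le N+1$) and from halves of level-$(i-1)$ gap intervals (scale $\le N+i$ by the inductive hypothesis); removing the scale-$\le N$ covered set $\mathcal B_I^{*}$ preserves the bound $\le N+i$. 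Since a gap interval first appearing at level $i$ is subdivided at most $n-i$ further times, each step raising the scale by one, the finest Haar function it produces has scale at most $(N+i)+(n-i)=N+n$. Finally, $R$ is a well-defined $\{0,1\}$-valued Haar multiplier on the finite-dimensional space $Y_{N+n}$, and the three asserted properties follow from the bookkeeping above.
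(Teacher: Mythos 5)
Your construction is correct, and in substance it is the same Gamlen--Gaudet-type gap-filling that the paper relies on; the difference is one of presentation and bookkeeping. The paper's own proof of \Cref{lem:almost-faithful-to-faithful} is essentially a citation: it invokes the finite version of the extension construction in Lemmas~5.5 and~5.6 of~\cite{MR4894823} and only adds the observation that the frequencies $n_0,\dots,n_n$ there can be chosen as $N,N+1,\dots,N+n$, which is exactly what places the extended system in $Y_{N+n}$. You rebuild this from scratch: your recursion (fill each level set, keep $\tilde{h}_I$ on $\mathcal{B}_I^{*}$, put fresh Haar functions on the gaps) and your multiplier (retain precisely the ancestors-or-equals of intervals in $\bigcup_{I}\mathcal{B}_I$) reproduce the cited construction, except that there the gap arising at level $i$ is chopped into intervals of the \emph{single} scale $N+i$, whereas you take maximal dyadic gap intervals and subdivide them one scale per level; both choices satisfy your inductive bound (gaps at level $i$ have scale at most $N+i$), and your induction proving that bound is sound, so the containment in $Y_{N+n}$ follows either way. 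What your writeup buys is self-containedness, which is genuinely useful here since the paper gives no details; what it still owes is a full justification of the key claim behind property~\eqref{lem:almost-faithful-to-faithful:iii}, namely that no filler interval contains an interval of $\bigcup_I \mathcal{B}_I$. Your dichotomy (``disjoint from the original, or strictly inside an original at an ancestor node'') is true but not automatic, and proving it is where the nesting structure is actually used: a filler $J$ at node $I$ avoids $\mathcal{B}_I^{*}$, hence avoids every original attached to $I$ or to a descendant of $I$, since those supports lie inside $\mathcal{B}_I^{*}$; originals at nodes incomparable to $I$ lie in the opposite level set of the last common ancestor; and for an ancestor node $I''$ one shows, by walking up the tree, that $J$ is contained in a \emph{half} of a single interval $K$ of the Haar support of $\hat{h}_{I''}$, so any $L\in\mathcal{B}_{I''}$ meeting $J$ must coincide with $K$ and therefore strictly contains $J$ rather than the reverse. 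With that argument spelled out, faithfulness of $(\hat{h}_I)_{I\in\mathcal{D}_{\le n}}$ and properties \eqref{lem:almost-faithful-to-faithful:i}--\eqref{lem:almost-faithful-to-faithful:iii} all hold exactly as you claim.
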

\begin{proof}
  This is a finite version of the construction in Lemma~5.5 and Lemma~5.6 of~\cite{MR4894823}. Note that the numbers $n_0,n_1,\dots,n_n$ in the proof of \cite[Lemma~5.5]{MR4894823} can be chosen as $N, N+1,\dots, N+n$. Hence, the resulting system~$(\hat{h}_I)_{I\in \mathcal{D}_{\le n}}$ is indeed contained in the space~$Y_{N+n}$.
\end{proof}

\begin{thm}\label{thm:operators-A-B-almost-faithful} Let $(\tilde{b}_I^n)_{(n,I) \in \mathcal{D}_{\omega}}$ be an almost faithful Haar system in $Y_{\omega}$, and let $\eta>0$. For $(n,I) \in \mathcal{D}_{\omega}$, put $B_I^n = \operatorname{supp} \tilde{b}_I^n$ and $\mu_n = |B_{[0,1)}^n|$.  Assume that $\mu_n\ge 1/2$ and $\bigl| |I|/|B_I^n|  - 1/\mu_n\bigr| \le 8^{-1}4^{-n}\eta$ for all $(n,I)\in \mathcal{D}_{\omega}$. Then the operators $A,B \colon Y_{\omega} \to  Y_{\omega} $ defined by
\begin{align*}
  Bx =  \sum_{(n,I) \in \mathcal{D}_{\omega}} \frac{\langle h_I^n, x \rangle}{|I|} \tilde{b}_I^n
  \qquad \text{and} \qquad
  Ax =  \sum_{(n,I) \in \mathcal{D}_{\omega}} \frac{\langle \tilde{b}_I^n, x \rangle}{|B_I^n|} h_I^n
\end{align*}
satisfy $AB = I_{Y_{\omega}}$ as well as $\| B \| \leq 1$ and $\| A \| \leq 4 +\eta$.
\end{thm}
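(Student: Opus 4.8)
The plan is to establish $AB = I_{Y_{\omega}}$ by a direct orthogonality computation, and to obtain both norm bounds by comparing $A,B$ with the \emph{faithful} operators $\hat{A},\hat{B}$ of \Cref{thm:operators-A-B}, using \Cref{lem:almost-faithful-to-faithful} to pass from the given almost faithful system to an honestly faithful one together with a $\{0,1\}$-valued stopping-time multiplier. First I would record that, writing $\tilde{b}_I^n = \sum_{K\in \mathcal{B}_I^n}\theta_K h_K^{N(n)}$, the functions $\tilde{b}_I^n$ are pairwise orthogonal with $\langle \tilde{b}_J^m, \tilde{b}_I^n\rangle = |B_I^n|\,\delta_{(m,J),(n,I)}$: for $m\ne n$ this follows from the independence of the $\sigma$-algebras $\mathcal{H}_{N(m)},\mathcal{H}_{N(n)}$ together with the fact that each $\tilde{b}_I^n$ has mean zero, while for $m=n$ it follows from the disjointness $\mathcal{B}_I^n\cap \mathcal{B}_J^n = \emptyset$ (\Cref{dfn:faithful}~\eqref{dfn:faithful:i}) and $\sum_{K\in \mathcal{B}_I^n}|K| = |B_I^n|$. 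Substituting $Bx = \sum a_I^n \tilde{b}_I^n$ (where $a_I^n = \langle h_I^n, x\rangle/|I|$) into the formula for $A$ and using this orthogonality immediately gives $ABx = \sum a_I^n h_I^n = x$, so $AB = I_{Y_{\omega}}$ holds for any almost faithful system.

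Next I would construct the multiplier. By \Cref{lem:almost-faithful-to-faithful}, each finite almost faithful system $(\tilde{h}_I^n)_{I\in \mathcal{D}_{\le n}}$ in $Y_{N(n)}$ extends to a finite faithful system $(\hat{h}_I^n)_{I\in \mathcal{D}_{\le n}}$ in $Y_{N(n)+n}$ and a Haar multiplier $R^{(n)}$ with entries in $\{0,1\}$ obeying the stopping condition \Cref{lem:almost-faithful-to-faithful}~\eqref{lem:almost-faithful-to-faithful:i} and satisfying $R^{(n)}\hat{h}_I^n = \tilde{h}_I^n$. Realizing both systems in the component $H_{\omega}^{N(n)+n}$ through the isometry $J_{N(n)+n}$ of \Cref{rem:isometric-isomorphism-Jn}, I set $b_I^n = J_{N(n)+n}\hat{h}_I^n$ (a faithful Haar system in $Y_{\omega}$, since $N(n)+n$ is strictly increasing) and assemble the $R^{(n)}$ into a single diagonal operator $\mathcal{R}$ on $Y_{\omega}$ acting as $J_{N(n)+n}R^{(n)}J_{N(n)+n}^{-1}$ on $H_{\omega}^{N(n)+n}$ and as $0$ elsewhere. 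The stopping structure transfers verbatim to $\mathcal{R}$, so \Cref{lem:multiplier-zero-one} gives $\|\mathcal{R}\|\le 1$, and by construction $\mathcal{R}b_I^n = \tilde{b}_I^n$. Since $\hat{B}x = \sum a_I^n b_I^n$, this yields the factorization $B = \mathcal{R}\hat{B}$ and hence $\|B\|\le \|\mathcal{R}\|\,\|\hat{B}\|\le 1$.

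For the bound on $A$, I would use that $\mathcal{R}$ is self-adjoint for the duality pairing, being a real Haar multiplier, so $\langle \tilde{b}_I^n, x\rangle = \langle b_I^n, \mathcal{R}x\rangle$. Comparing with $\hat{A}$ from \Cref{thm:operators-A-B} then gives $A = D\hat{A}\mathcal{R}$, where $D$ is the diagonal operator $Dh_I^n = (|I|/|B_I^n|)\,h_I^n$. It remains to bound $\|D\|$. I split $D = D_0 + D_1$ according to the hypothesis $|I|/|B_I^n| = 1/\mu_n + \varepsilon_I^n$ with $|\varepsilon_I^n|\le 8^{-1}4^{-n}\eta$: the error part $D_1 h_I^n = \varepsilon_I^n h_I^n$ satisfies $\|D_1\|\le \eta$ by \Cref{lem:diagonal-operator-bounded}, while $D_0 = \sum_n \mu_n^{-1}P_n$ is constant on each component, with coefficients $\mu_n^{-1}\in [1,2]$ (as $1/2\le \mu_n\le 1$). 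Since $(H_{\omega}^n)_n$ is a $1$-suppression unconditional finite-dimensional Schauder decomposition (\Cref{lem:schauder-basis}), writing $\mu_n^{-1} = \int_0^2 \mathbbm{1}[\mu_n^{-1}>t]\,dt$ and applying the suppression bound under the integral gives $\|D_0\|\le \sup_n \mu_n^{-1}\le 2$. Hence $\|A\|\le \|D\|\,\|\hat{A}\|\,\|\mathcal{R}\|\le 2+\eta \le 4+\eta$.

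The hard part will be the bookkeeping forced by the component mismatch: the given almost faithful system lives in the components $H_{\omega}^{N(n)}$, whereas \Cref{lem:almost-faithful-to-faithful} produces the faithful extension and its multiplier one level higher, in $Y_{N(n)+n}$. Making $\mathcal{R}b_I^n = \tilde{b}_I^n$ literally true therefore requires relocating the almost faithful system into the components $H_{\omega}^{N(n)+n}$, and the crux is to argue via the distributional formula of \Cref{lem:independent-sum-norm} (together with \Cref{rem:dfn-does-not-depend-on-choice}) that this relocation changes neither $\|A\|$ nor $\|B\|$, because it merely permutes independent, identically distributed components and leaves the quantities $|B_I^n|,\mu_n$ untouched; the identity $AB = I_{Y_{\omega}}$ was already verified directly and so needs no relocation. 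The only remaining care is in checking that the $\{0,1\}$ stopping-time structure of each $R^{(n)}$ survives the assembly into $\mathcal{R}$ so that \Cref{lem:multiplier-zero-one} applies, and that $\mathcal{R}$ is genuinely self-adjoint for the pairing used in defining $A$.
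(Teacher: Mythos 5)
Your proposal is correct and takes essentially the same route as the paper's proof: pass to faithful systems via \Cref{lem:almost-faithful-to-faithful}, assemble the finite multipliers into a global $\{0,1\}$-valued diagonal operator bounded by \Cref{lem:multiplier-zero-one}, resolve the component mismatch with a relocation isometry justified distributionally (this is exactly the paper's shift operator $S$ with $k(N(n)) = N(n)+n$), and factor $B$ and $A$ through the faithful operators $\hat{B},\hat{A}$ of \Cref{thm:operators-A-B} with a diagonal correction whose error part is controlled by \Cref{lem:diagonal-operator-bounded}. Your only deviations are minor refinements: you verify $AB = I_{Y_{\omega}}$ directly by biorthogonality (the paper leaves this implicit in the factorization identities), and your suppression-projection integral trick $\mu_n^{-1} = \int_0^2 \mathbbm{1}[\mu_n^{-1}>t]\,\mathrm{d}t$ gives $\|D_0\|\le \sup_n \mu_n^{-1}\le 2$, sharpening the paper's bound $\|\tilde{M}\|\le 2\sup_n\mu_n^{-1}\le 4$ obtained from $2$-unconditionality of the decomposition.
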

\begin{proof}
  Let the almost faithful Haar system~$(\tilde{b}_I^n)_{(n,I)\in \mathcal{D}_{\omega}}$ be given by
  \begin{equation*}
    \tilde{b}_I^n = J_{N(n)} \tilde{h}_I^n,\qquad (n,I)\in \mathcal{D}_{\omega},
  \end{equation*}
  where $N\colon \mathbb{N}_0\to \mathbb{N}_o$ is strictly increasing and for every~$n\in \mathbb{N}_0$, $(\tilde{h}_I^n)_{I\in \mathcal{D}_{\le n}}$ is an almost faithful Haar system in~$Y_{N(n)}$. For every~$n\in \mathbb{N}_0$, we can apply \Cref{lem:almost-faithful-to-faithful} to obtain a faithful Haar system~$(\hat{h}_I^n)_{I\in \mathcal{D}_{\le n}}$ in~$Y_{N(n)+n}$ as well as a Haar multiplier~$R_n$ on~$Y_{N(n)+n}$ with entries~$\rho_I^n\in \{ 0,1 \}$, $I\in \mathcal{D}_{\le N(n)+n}$, such that properties~\eqref{lem:almost-faithful-to-faithful:i}--\eqref{lem:almost-faithful-to-faithful:iii} of~\Cref{lem:almost-faithful-to-faithful} are satisfied. In particular, we have~$R_n \hat{h}_I^n = \tilde{h}_I^n$ for all~$(n,I)\in \mathcal{D}_{\omega}$. By combining the entries of the operators~$R_n$ and padding with zeros, we obtain a diagonal operator $R\colon Y_{\omega}\to Y_{\omega}$ which satisfies the assumptions of \Cref{lem:multiplier-zero-one} (hence, $\|R\|\le 1$) and, in addition,
  \begin{equation*}
    R|_{H_{\omega}^{N(n)+n}} = J_{N(n)+n} R_n J_{N(n)+n}^{-1},\qquad  n\in \mathbb{N}_0.
  \end{equation*}
  Moreover, we define the operator~$S\colon Y_{\omega}\to Y_{\omega}$ by $S h_I^n = h_I^{k(n)}$ for all $(n,I)\in \mathcal{D}_{\omega}$, where $k\colon \mathbb{N}_0\to \mathbb{N}_0$ is any strictly increasing function with the property that $k(N(n)) = N(n) + n$ for all~$n\in \mathbb{N}_0$. Note that~$S$ is an isometry. Next, we put
  \begin{equation*}
    b_I^n = J_{N(n) + n} \hat{h}_I^n,\qquad (n,I)\in \mathcal{D}_{\omega}
  \end{equation*}
  to obtain a faithful Haar system~$(b_I^n)_{(n,I)\in \mathcal{D}_{\omega}}$ in~$Y_{\omega}$. We know from \Cref{thm:operators-A-B} that the operators~$\hat{A},\hat{B}$ associated with~$(b_I^n)_{(n,I)\in \mathcal{D}_{\omega}}$ satisfy $AB = I_{Y_{\omega}}$ and $\|\hat{A}\| = \|\hat{B}\| = 1$.

  Finally, we define the diagonal operator $M\colon Y_{\omega}\to Y_{\omega}$ by $M h_I^n = m_I^n h_I^n$ for all $(n,I)\in \mathcal{D}_{\omega}$, where $m_I^n = |I|/|B_I^n|$. Note that $m_{[0,1)}^n = 1/\mu_n$ for every $n\in \mathbb{N}_0$ and, by the hypothesis, $|m_I^n - 1/\mu_n|\le 8^{-1}4^{-n}\eta$ for all~$(n,I)\in \mathcal{D}_{\omega}$. To prove that~$M$ is bounded, we consider the auxiliary operator $\tilde{M}\colon Y_{\omega}\to Y_{\omega}$ given by $\tilde{M}h_I^n = (1/\mu_n) h_I^n$ for all $(n,I)\in \mathcal{D}_{\omega}$. We know from \Cref{lem:schauder-basis} that $(H_{\omega}^n)_{n=0}^{\infty}$ is a $2$-unconditional finite-dimensional Schauder decomposition of~$Y_{\omega}$. Hence, $\|\tilde{M}\|\le 2 \sup_{n\in \mathbb{N}_0} |1/\mu_n|\le 4$. Thus, it follows from \Cref{lem:diagonal-operator-bounded} that $\|M\|\le \|\tilde{M}\| + \|M - \tilde{M}\|\le 4 + \eta$.

  Now observe that
  \begin{equation*}
    R \hat{B} h_I^n = R b_I^n = J_{N(n)+n} R_n \hat{h}_I^n = S \tilde{b}_I^n,\qquad (n,I)\in \mathcal{D}_{\omega}.
  \end{equation*}
  Hence, we can write $B = S^{-1}R \hat{B}$. Moreover, we have $A = M \hat{A}RS$ since $\langle \tilde{b}_I^n, x \rangle = \langle S^{-1} R b_I^n, x \rangle = \langle b_I^n, RSx \rangle$ for all~$x\in H_\omega$ and $(n,I)\in \mathcal{D}_{\omega}$ (note that~$R$ is a diagonal operator). This implies that $\|B\|\le 1$ and $\|A\|\le 4 + \eta$.
\end{proof}

\section{Diagonalization and level-wise stabilization}

In this section, we prove that under weak assumptions on the space $Y$, any bounded linear operator $T\colon Y_{\omega}\to Y_{\omega}$ can be reduced to a diagonal operator: We show that there exists a bounded linear operator $D\colon Y_{\omega}\to Y_{\omega}$, which is diagonal with respect to $(h_I^n)_{(n,I)\in \mathcal{D}_{\omega}}$, and operators $A,B$ such that $\|D - ATB\|$ is arbitrarily small. Moreover, the entries~$(d_I^n)_{(n,I)\in \mathcal{D}}$ of~$D$ are already stabilized along every level, i.e., $d_I^n\approx d_J^n$ whenever $|I|=|J|$. This is achieved by combining the infinite-dimensional methods from~\cite{MR4839586,MR4894823} with the finite-dimensional diagonalization result from~\cite{MR4884827}, which uses probabilistic techniques based on the work of R.~Lechner~\cite{MR3990955}. The idea of applying probabilistic methods (in particular, large deviation estimates) to stabilize entries of diagonal operators goes back to~\cite{MR4430957}.

In the following, we first construct finite faithful Haar systems which diagonalize the operator~$T$ separately in each finite-dimensional component $H_{\omega}^n$, $n\in \mathbb{N}_0$. Then we combine these constructions to obtain a system $(b_I^n)_{(n,I)\in \mathcal{D}_{\omega}}$, ensuring that also the off-diagonal terms of the form $\langle b_I^n, T b_J^m \rangle$ for $n\ne m$, which involve two different components $H_{\omega}^n$ and~$H_{\omega}^m$, are small. Thus, the resulting system~$(b_I^n)_{(n,I)\in \mathcal{D}_{\omega}}$ almost diagonalizes the operator~$T$. For the first step, we use the following finite-dimensional result from~\cite{MR4884827}.

\begin{lem}\label{lem:diagonalization-finite-dimensional}
  Let~$Y$ be a Haar system Hardy space, and let~$\Gamma,\eta > 0$. Moreover, let $n,N\in \mathbb{N}_0$ be chosen so that
  \begin{equation}\label{eq:6}
    N \ge N_0(n,\Gamma,\eta) := 21(n+1) + \Bigl\lfloor 4 \log_2\Bigl( \frac{\Gamma}{\eta} \Bigr) \Bigr\rfloor.
  \end{equation}
  Then for every linear operator $T\colon Y_N\to Y_N$ with~$\|T\|\le \Gamma$, there exists a finite faithful Haar system $(\hat{h}_I)_{I\in \mathcal{D}_{\le n}}$ with frequencies less than or equal to $N_0(n,\Gamma,\eta)$ as well as numbers $\alpha_0,\alpha_1,\dots,\alpha_n\in \mathcal{A}(\mathrm{diag}(T))$ such that the following holds: Put
  \begin{equation*}
    d_I = \frac{\langle \hat{h}_I, T \hat{h}_I \rangle}{|I|},\qquad I\in \mathcal{D}_{\le n},
  \end{equation*}
  then
  \begin{enumerate}[(i)]
    \item $|\langle \hat{h}_I, T \hat{h}_J \rangle|\le \eta |I|$ for all $I\ne J\in \mathcal{D}_{\le n}$.
    \item $|d_I|\le \|T\|$ for all $I\in \mathcal{D}_{\le n}$.
    \item $|d_I - \alpha_k| \le 8^{-n}\eta$ for all $I\in \mathcal{D}_k$, $0\le k\le n$.
  \end{enumerate}
\end{lem}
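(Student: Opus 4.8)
This is a purely finite-dimensional, quantitative statement, and the approach I would take is the probabilistic one of R.~Lechner and its extension to Haar system Hardy spaces in~\cite{MR4884827}: construct the faithful Haar system $(\hat h_I)_{I\in\mathcal D_{\le n}}$ at random and show that, with positive probability, all three conclusions hold at once. The plan is to build the system recursively over the levels $i=0,1,\dots,n$, as in \Cref{rem:faithful-from-rademachers}, assigning to level~$i$ a frequency $k_i\le N_0$ with substantial gaps $k_{i+1}-k_i$, and writing $\hat h_I=\sum_{K\in\mathcal B_I}\theta_K h_K$ with blocks $\mathcal B_I\subset\mathcal D_{k_i}$ compatible with the faithfulness conditions of \Cref{dfn:faithful} and independent Rademacher signs~$(\theta_K)$. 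Each function then consists of $|\mathcal B_I|=2^{k_i-i}$ blocks, and it is this multitude of blocks, made available by the frequency gaps, that drives the concentration estimates below.

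The algebraic backbone is the decomposition
\begin{equation*}
  \langle \hat h_I, T\hat h_I\rangle
  = \sum_{K\in\mathcal B_I}\langle h_K, Th_K\rangle
  + \sum_{K\ne L\in\mathcal B_I}\theta_K\theta_L\langle h_K, Th_L\rangle,
\end{equation*}
in which, after dividing by~$|I|$, the first sum is the deterministic \emph{average} of the diagonal entries $\langle h_K, Th_K\rangle/|K|$ of~$T$ over $K\in\mathcal B_I$, an element of $\mathcal A(\mathrm{diag}(T))$, while the second is a mean-zero Rademacher chaos. Conclusion~(iii) thus splits into two tasks: choosing the blocks so that these averages are nearly equal across each level $\mathcal D_k$ (this common value is $\alpha_k$), which I would arrange by a clustering/pigeonhole selection exploiting that every diagonal entry lies in $[-\Gamma,\Gamma]$; and bounding the chaos by $8^{-n}\eta|I|$ with high probability. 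Conclusion~(ii) does not even need this splitting: since $\hat h_I$ is distributionally equivalent to $h_I$, we have $\|\hat h_I\|_Y\|\hat h_I\|_{Y^*}=|I|$ by \Cref{pro:HSHS-1d}~\eqref{pro:HSHS-1d:iii}, whence $|d_I|\le \|\hat h_I\|_{Y^*}\|T\|\,\|\hat h_I\|_Y/|I|=\|T\|$ directly.

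For the off-diagonal estimate~(i), note that for $I\ne J$ the blocks $\mathcal B_I$ and $\mathcal B_J$ are disjoint, so $\langle \hat h_I, T\hat h_J\rangle=\sum_{K\in\mathcal B_I,\,L\in\mathcal B_J}\theta_K\theta_L\langle h_K, Th_L\rangle$ is again a mean-zero chaos, which I would control by large-deviation (Bernstein/Hoeffding-type) estimates for the signs. The crucial point is that, since $Y$ is an arbitrary Haar system Hardy space, the size of such random bilinear forms cannot be read off from an $L^2$ variance; instead one passes $T$ through random vectors $\sum_K\theta_K h_K$ and estimates everything in the norm of~$Y$, using its square-function / rearrangement-invariant description (\Cref{lem:independent-sum-norm}) together with the boundedness of conditional expectations (\Cref{lem:cond-expect}). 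A union bound over the finitely many (of order $4^n$) diagonal and off-diagonal conditions then produces a realization of the signs and blocks satisfying all requirements simultaneously.

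The heart of the matter---and the reason this is genuinely harder than the classical $L^p$ case treated by Lechner---is precisely this last point: running the chaos and concentration arguments through the norm of a general $Y$, where no Hilbert- or $L^p$-orthogonality is available. Balancing the tight target error $8^{-n}\eta$ on the diagonal against the \emph{linear} frequency budget $N_0(n,\Gamma,\eta)=21(n+1)+\lfloor 4\log_2(\Gamma/\eta)\rfloor$---the per-level cost~$21$ absorbing the number of blocks needed for concentration and the logarithmic term the dependence on $\Gamma/\eta$---while respecting the recursive faithful structure, is the delicate quantitative core, and it is exactly the content of~\cite{MR4884827} that I would invoke here.
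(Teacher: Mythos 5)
Your proposal ultimately takes the same route as the paper: the paper's entire proof of this lemma is an appeal to \cite[Proposition~3.2]{MR4884827} (with $\delta = 1$), whose argument is precisely the randomized faithful Haar system construction you outline, and you too defer the quantitative core to that reference. So at the level of strategy the two agree, and your decomposition of $\langle \hat h_I, T\hat h_I\rangle$ into a block average of diagonal entries plus a mean-zero sign chaos is the correct algebraic backbone.

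However, one point in your sketch misdescribes the mechanism of the cited proof and would become a genuine gap if executed as written. You propose to make the block averages nearly equal across a level by a ``clustering/pigeonhole selection'' of blocks, calling these averages deterministic. Neither is accurate: for $k\ge 1$ the blocks $\mathcal{B}_I$ are themselves random (they are determined by the signs chosen at the previous levels), and the faithful structure ties the blocks of \emph{all} $I\in \mathcal{D}_k$ to one and the same sign pattern, so they cannot be selected or tuned interval by interval. What the proof of \cite[Proposition~3.2]{MR4884827} actually does is fix $\alpha_k$ to be the average of the diagonal entries over the \emph{entire} frequency level,
\begin{equation*}
  \alpha_k = 2^{-(m+k)}\sum_{K\in \mathcal{D}_{m+k}} \frac{\langle h_K, Th_K\rangle}{|K|} \in \mathcal{A}(\operatorname{diag}(T)),
\end{equation*}
and then show by concentration that, with high probability simultaneously for all $I$, the random block averages (the blocks $\mathcal{B}_I$ sample $\mathcal{D}_{m+k}$ nearly uniformly) and the chaos terms deviate from their expectations by at most the allowed error; this is exactly why the $\alpha_k$ admit the explicit formula above, which the paper records. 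Pigeonhole over levels is the mechanism of the \emph{next} reduction step (\Cref{lem:stabilization-finite-dimensional}), not of this one. A smaller quibble: your argument for (ii) invokes \Cref{pro:HSHS-1d}~\eqref{pro:HSHS-1d:iii}, which is stated for the Haar functions $h_I$ rather than for the blocks $\hat h_I$; the identity $\|\hat h_I\|_Y\|\hat h_I\|_{Y^{*}} = |I|$ does hold (disjointly supported sums plus rearrangement invariance on both sides), but it needs to be justified, and the cleaner route is to note that $d_I$ is a diagonal entry of $\hat A T \hat B$, where the associated operators satisfy $\|\hat A\| = \|\hat B\| = 1$ (the finite-dimensional counterpart of \Cref{thm:operators-A-B}), and then apply the bound from \Cref{rem:isometric-isomorphism-Jn}.
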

\begin{proof}
  This is proved in~\cite[Proposition~3.2]{MR4884827} (where we put~$\delta = 1$). The existence of the system~$(\hat{h}_I)_{I\in \mathcal{D}_{\le n}}$ and the numbers~$\alpha_0,\dots,\alpha_n$ is not stated explicitly, but the proof in~\cite{MR4884827} proceeds by constructing a system with the stated properties (and subsequently using the associated operators~$\hat{A}, \hat{B}$ to obtain a factorization result). Note that by \cite[equation~(3.15)]{MR4884827}, the numbers~$\alpha_k$ can be chosen as
  \begin{equation*}
    \alpha_k = 2^{-(m+k)} \sum_{K\in \mathcal{D}_{m+k}} \frac{\langle h_K, T h_K \rangle}{|K|} \in \mathcal{A}(\operatorname{diag}(T)),
    \qquad k=0,\dots,n,
  \end{equation*}
  where~$m\le N - n$ is a natural number.
\end{proof}

In order to apply this result in our setting, we will use the isometric isomorphisms $J_n\colon Y_n\to H_{\omega}^n$ defined in \Cref{rem:isometric-isomorphism-Jn}. Our construction yields a faithful Haar system~$(b_I^n)_{(n,I)\in \mathcal{D}_{\omega}}$ in~$Y_{\omega}$ whose properties are summarized in the following proposition.

\begin{pro}\label{pro:diagonalization}
  Let $Y$ be a Haar system Hardy space, and assume that the sequence of standard Rademacher functions~$(r_n)_{n=0}^{\infty}$ is weakly null in~$Y$. Let~$T\colon Y_{\omega}\to Y_{\omega}$ be a bounded linear operator, and let $\eta_{n,m} > 0$ for $n,m \in \mathbb{N}_0$. Then there exists a faithful Haar system~$(b_I^n)_{(n,I)\in \mathcal{D}_{\omega}}$ in~$Y_{\omega}$ such that $| \langle b_I^n, T b_J^m \rangle | \le \eta_{n,m}$ for all $(n,I)\ne (m,J)\in \mathcal{D}_{\omega}$.
  Moreover, the numbers
          \begin{equation*}
            d_I^n = \frac{\langle b_I^n, T b_I^n \rangle}{|I|},\qquad (n,I)\in \mathcal{D}_{\omega}
          \end{equation*}
  have the following properties:
  \begin{enumerate}[(i)]
    \item $|d_I^n|\le \|T\|$ for all $(n,I)\in \mathcal{D}_{\omega}$.
    \item For every $n\in \mathbb{N}_0$, there exist scalars $\alpha_0^n,\alpha_1^n,\dots,\alpha_n^n\in \mathcal{A}(\operatorname{diag}(T))$ such that the inequality $|d_I^n - \alpha_k^n| \le 8^{-n} \eta_{n,n}$ holds for all $I\in \mathcal{D}_k$, $0\le k\le n$.
  \end{enumerate}
\end{pro}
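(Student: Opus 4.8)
The plan is to build the faithful Haar system $(b_I^n)_{(n,I)\in\mathcal D_\omega}$ inductively over $n$, placing the $n$-th finite faithful Haar system inside a component $H_\omega^{N(n)}$ for a strictly increasing sequence $N(0)<N(1)<\cdots$ that I choose as the induction proceeds; recall that by \Cref{dfn:faithful-system-independent-sum} such a system has the form $b_I^n=J_{N(n)}\hat h_I^n$ with $(\hat h_I^n)_{I\in\mathcal D_{\le n}}$ a finite faithful Haar system in $Y_{N(n)}$. The within-component diagonalization is handed to us directly: for each $n$ I consider the compression $T_{N(n)}=J_{N(n)}^{-1}P_{N(n)}TJ_{N(n)}\colon Y_{N(n)}\to Y_{N(n)}$ from \Cref{rem:isometric-isomorphism-Jn}, which satisfies $\|T_{N(n)}\|\le\|T\|$, and—provided $N(n)\ge N_0(n,\|T\|,\eta_{n,n})$ as in \Cref{eq:6}—I apply \Cref{lem:diagonalization-finite-dimensional} to $T_{N(n)}$ with $\Gamma=\|T\|$ and $\eta=\eta_{n,n}$. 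This produces $(\hat h_I^n)_{I\in\mathcal D_{\le n}}$ together with scalars $\alpha_0^n,\dots,\alpha_n^n\in\mathcal A(\operatorname{diag}(T_{N(n)}))$. Setting $b_I^n=J_{N(n)}\hat h_I^n$ and invoking the identity $\langle b_I^n,Tb_J^n\rangle=\langle\hat h_I^n,T_{N(n)}\hat h_J^n\rangle$ from \Cref{eq:22}, conclusions (i) and (ii) of the proposition, as well as the off-diagonal bound in the case $n=m$, follow at once from the three conclusions of \Cref{lem:diagonalization-finite-dimensional}. Here \Cref{eq:22} also shows $\operatorname{diag}(T_{N(n)})\subseteq\operatorname{diag}(T)$, so that $\alpha_k^n\in\mathcal A(\operatorname{diag}(T))$ as required.

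The remaining task is to control the cross-component entries $\langle b_I^n,Tb_J^m\rangle$ with $n\ne m$, and this is where the choice of $N(n)$ enters. The crucial structural observation is that each $b_I^n$ is a signed sum of \emph{standard} Haar functions sitting at a single standard frequency: for $I\in\mathcal D_i$ the block $\hat h_I^n$ lives at one dyadic level $k_i$, and applying $J_{N(n)}$ replaces each $h_K$ ($K\in\mathcal D_{k_i}$) appearing in $\hat h_I^n$ by $h_K^{N(n)}$, which by \Cref{rem:faithful-from-rademachers} is itself a signed sum of standard Haar functions all at one and the same standard frequency, namely the $k_i$-th smallest element of $\tau_{N(n)}$, and in particular at a frequency $\ge\min\tau_{N(n)}$. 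Hence each $b_I^n$ lies in $\mathcal K_f$ for some $f\ge\min\tau_{N(n)}$, in the notation of \Cref{lem:haar-weakly-null}. Since the sets $\tau_n$ are pairwise disjoint and finite, only finitely many of them can meet any initial segment of $\mathbb N_0$, so $\min\tau_{N(n)}\to\infty$ as $N(n)\to\infty$.

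With this in hand I run a gliding-hump argument. Suppose $(b_J^l)_J$ has been constructed for all $l<n$; then the finitely many vectors $Tb_J^l\in Y_\omega$ and functionals $T^{*}b_J^l\in Y_\omega^{*}$ (for $l<n$ and $J\in\mathcal D_{\le l}$) are fixed. Applying the two limits of \Cref{lem:haar-weakly-null}—extending each $T^{*}b_J^l$ to an element of $Y^{*}$ by Hahn–Banach for the second limit—I obtain a frequency threshold beyond which every single-frequency block $f$ satisfies $|\langle f,Tb_J^l\rangle|\le\eta_{n,l}$ and $|\langle T^{*}b_J^l,f\rangle|\le\eta_{l,n}$ for all relevant $J$. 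I then pick $N(n)>N(n-1)$ so large that $N(n)\ge N_0(n,\|T\|,\eta_{n,n})$ and $\min\tau_{N(n)}$ exceeds this threshold. Since every $b_I^n$ lies in $\mathcal K_f$ with $f\ge\min\tau_{N(n)}$, this yields $|\langle b_I^n,Tb_J^l\rangle|\le\eta_{n,l}$ and $|\langle b_J^l,Tb_I^n\rangle|=|\langle T^{*}b_J^l,b_I^n\rangle|\le\eta_{l,n}$, covering all off-diagonal pairs with distinct component indices.

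The main obstacle is precisely the cross-component control in the last two paragraphs: unlike the within-component diagonalization, which \Cref{lem:diagonalization-finite-dimensional} supplies for free, making $\langle b_I^n,Tb_J^m\rangle$ small for $n\ne m$ requires the structural insight that the faithful blocks $b_I^n$ are genuine single-frequency blocks of the \emph{standard} Haar system, so that \Cref{lem:haar-weakly-null} can be applied, together with the fact that the minimal standard frequency in component $N(n)$ tends to infinity. The only point needing care is that the two lower bounds on $N(n)$—the one from \Cref{eq:6} guaranteeing enough dyadic levels for diagonalization, and the one from the gliding hump guaranteeing sufficiently high frequency—are compatible; this holds because both are met simply by taking $N(n)$ large enough, so the induction closes.
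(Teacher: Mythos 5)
Your proposal is correct and follows the same skeleton as the paper's proof: inductively choose $N(n)$, diagonalize within each component by applying \Cref{lem:diagonalization-finite-dimensional} to $T_{N(n)} = J_{N(n)}^{-1}P_{N(n)}TJ_{N(n)}$ (which also yields properties (i), (ii) and the case $n=m$ exactly as you say), and make the cross-component entries small by choosing $N(n)$ large via \Cref{lem:haar-weakly-null}. The only real divergence is the mechanism in this last step. The paper fixes, \emph{before} choosing $N(n)$, the finitely many candidate shapes $f = \sum_{K\in\mathcal{B}}\varepsilon_K h_K^{N(n)}$ with $\mathcal{B}\subset \mathcal{D}_{\le N_0(n)}$ --- this is precisely where the bound ``frequencies $\le N_0(n,\Gamma,\eta)$'' in \Cref{lem:diagonalization-finite-dimensional} is used --- and then invokes the sequential weak/weak* nullness of $(h_K^m)_{m=0}^{\infty}$ for each fixed $K$, exploiting that only finitely many shapes and finitely many previously built $b_J^l$ occur. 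You instead invoke the uniform, sup-over-$\mathcal{K}_t$ form of \Cref{lem:haar-weakly-null}, resting on the structural observation that each $b_I^n$ is a $\pm 1$-combination of \emph{standard} Haar functions at a single standard frequency belonging to $\tau_{N(n)}$, hence $\ge \min\tau_{N(n)}$, together with the fact that $\min\tau_m\to\infty$ by pairwise disjointness of the sets $\tau_m$; both of these claims are verified correctly. Your variant never needs the paper's frequency bound $N_0(n)$ for the cross terms (only that the finite system has frequencies at all), at the price of the single-frequency bookkeeping through $J_{N(n)}$ and of the Hahn--Banach extension of $T^{*}b_J^l$ from $Y_{\omega}^{*}$ to $Y^{*}$, which you rightly flag as needed since \Cref{lem:haar-weakly-null} is stated for $y^{*}\in Y^{*}$.
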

\begin{proof}
  Let~$T\colon Y_{\omega}\to Y_{\omega}$ be a bounded linear operator. For $n\in \mathbb{N}_0$, let $\eta_n > 0$ be chosen such that
  \begin{equation}\label{eq:11}
    \eta_n \le \min(\eta_{n,m}, \eta_{m,n})\qquad \text{for all } 0\le m\le n.
  \end{equation}
  Moreover, for $n\in \mathbb{N}_0$, put $N_0(n) := N_0(n,\|T\|,\eta_n)$, as defined in~\eqref{eq:6}.
  We will now choose numbers~$N(n)$, collections~$(\mathcal{B}_I^n)_{I\in \mathcal{D}_{\le n}}$ and signs~$(\theta_K^n)_{K\in \mathcal{D}}$ by induction on~$n$, which will then be used to define finite faithful Haar systems $(\hat{h}_I^n)_{I\in \mathcal{D}_{\le n}}$, $n\in \mathbb{N}_0$. Let~$n\in \mathbb{N}_0$, and assume that for every~$0\le m\le n-1$, we have already chosen~$N(m)$ and constructed collections $\mathcal{B}_I^m\subset \mathcal{D}_{N(m)}$ for $I\in \mathcal{D}_{\le m}$ and signs $(\theta_K^m)_{K\in \mathcal{D}}\in \{ \pm 1 \}^{\mathcal{D}}$. These collections and signs determine the functions
  \begin{equation*}
    \hat{h}_I^m = \sum_{K\in \mathcal{B}_I^m} \theta_K^m h_K
    \quad \text{and} \quad
    b_I^m = J_{N(m)} \hat{h}_I^m,
    \qquad
    I\in \mathcal{D}_{\le m},\ 0\le m\le n-1.
  \end{equation*}
  We now choose~$N(n)$ to be any integer with $N(n)\ge N_0(n)$. If~$n \ge 1$, then by increasing $N(n)$ even further, we can ensure that the following additional conditions are satisfied:
  \begin{enumerate}[(i)]
    \item $N(n) > N(n-1)$.
    \item For every function~$f\in H_{\omega}$ of the form $f = \sum_{K\in \mathcal{B}} \varepsilon_Kh_K^{N(n)}$, where~$\mathcal{B}\subset \mathcal{D}_{\le N_0(n)}$ and~$(\varepsilon_K)_{K\in \mathcal{B}}\in \{ \pm 1 \}^{\mathcal{B}}$, we have
    \begin{equation}\label{eq:8}
      |\langle f, T b_J^m \rangle| \le \eta_n
      \quad \text{and}\quad
      |\langle b_J^m, T f \rangle| \le \eta_n
    \end{equation}
      for all  $J\in \mathcal{D}_{\le m}$ and $0\le m\le n-1$.
  \end{enumerate}
  The last condition states that the functions~$f$ of the specified form almost annihilate all previously constructed elements $Tb_I^m\in Y_{\omega}$ and $T^{*}b_I^m\in Y_{\omega}^{*}$.
  To see that~\eqref{eq:8} is indeed satisfied if~$N(n)$ is chosen large enough, we exploit that for every fixed $K\in \mathcal{D}$, according to \Cref{lem:haar-weakly-null}, the sequence $(h_K^m)_{m=0}^{\infty}$ is weakly null in~$Y_{\omega}$ and weak* null in~$Y_{\omega}^{*}$. This yields the claimed result since there are only finitely many choices of~$\mathcal{B}$ and~$(\varepsilon_K)_{K\in \mathcal{B}}$ and finitely many functions~$b_J^m$ which have already been constructed.

  After choosing~$N(n)$, we consider the operator $T_{N(n)} = J_{N(n)}^{-1}P_{N(n)}TJ_{N(n)}$ (see \Cref{rem:isometric-isomorphism-Jn}). We apply \Cref{lem:diagonalization-finite-dimensional} to $T_{N(n)}\colon Y_{N(n)}\to Y_{N(n)}$, putting $\Gamma = \|T\|$ and $\eta = \eta_n$, to obtain a faithful Haar system
  \begin{equation*}
    \hat{h}_I^n = \sum_{K\in \mathcal{B}_I^n} \theta_K^n h_K\in Y_{N_0(n)}\subset Y_{N(n)}, \qquad I\in \mathcal{D}_{\le n}
  \end{equation*}
  for certain collections~$\mathcal{B}_I^n\subset \mathcal{D}_{\le N_0(n)}$, $I\in \mathcal{D}_{\le n}$, and signs~$(\theta_K^n)_{K\in \mathcal{D}}$. By applying the isometric isomorphism~$J_{N(n)}\colon Y_{N(n)}\to H_{\omega}^{N(n)}\subset Y_{\omega}$, we obtain the corresponding system
  \begin{equation}\label{eq:10}
    b_I^n = \sum_{K\in \mathcal{B}_I^n} \theta_K^n h_K^{N(n)}\in H_{\omega}^{N(n)},\qquad I\in \mathcal{D}_{\le n}.
  \end{equation}
  Moreover, we obtain numbers
  \begin{equation*}
    \alpha_0^n,\alpha_1^n,\dots,\alpha_n^n\in \mathcal{A}(\operatorname{diag}(T_{N(n)}))\subset \mathcal{A}(\operatorname{diag}(T)).
  \end{equation*}
  By \Cref{lem:diagonalization-finite-dimensional}, our newly constructed system has the following properties: Put
  \begin{equation*}
    d_I^n = \frac{\langle b_I^n, T b_I^n \rangle}{|I|} = \frac{\langle \hat{h}_I^n, T_{N(n)} \hat{h}_I^n \rangle}{|I|},\qquad I\in \mathcal{D}_{\le n}
  \end{equation*}
  (where we used \Cref{rem:isometric-isomorphism-Jn}). Then we have:
  \begin{enumerate}[(i)]
    \item $|\langle b_I^n, T b_J^n \rangle| = |\langle \hat{h}_I^n, T_{N(n)} \hat{h}_J^n \rangle|\le \eta_n |I|\le \eta_{n,n}$ for all $I\ne J\in \mathcal{D}_{\le n}$.
    \item $|d_I^n|\le \|T_{N(n)}\|\le \|T\|$ for all $I\in \mathcal{D}_{\le n}$.
    \item $|d_I^n - \alpha_k^n| \le 8^{-n}\eta_n\le 8^{-n}\eta_{n,n}$ for all $I\in \mathcal{D}_k$, $0\le k\le n$.
  \end{enumerate}
  Thus, it only remains to prove that
  \begin{equation}\label{eq:9}
    |\langle b_I^n, Tb_J^m \rangle| \le \eta_{n,m}
    \qquad \text{and}\qquad
    |\langle b_J^m, Tb_I^n \rangle| \le \eta_{m,n}
  \end{equation}
  hold for all $I\in \mathcal{D}_{\le n}$ and $J\in \mathcal{D}_{\le m}$, $0\le m\le n-1$. But since~$\mathcal{B}_I^n\subset \mathcal{D}_{\le N_0(n)}$ for all $I\in \mathcal{D}_{\le n}$, by the definition of~$b_I^n$ in~\eqref{eq:10}, we can apply~\eqref{eq:8} to $f = b_I^n$. Together with our choice of~$\eta_n$ in~\eqref{eq:11}, this proves~\eqref{eq:9}.
\end{proof}

Using \Cref{pro:diagonalization}, we can now prove our main result of this section.

\begin{thm}\label{thm:diagonalization}
  Let $Y$ be a Haar system Hardy space, and assume that the sequence of standard Rademacher functions~$(r_n)_{n=0}^{\infty}$ is weakly null in~$Y$. Let $T\colon Y_{\omega}\to Y_{\omega}$ be a bounded linear operator, and let~$\eta > 0$ and~$\eta_n > 0$ for all $n\in \mathbb{N}_0$. Then there exists a bounded linear operator $D\colon Y_{\omega}\to Y_{\omega}$ which is diagonal with respect to~$(h_I^n)_{(n,I)\in \mathcal{D}_{\omega}}$ such that $D$ projectionally factors through~$T$ with constant~$1$ and error~$\eta$. Moreover, the entries~$(d_I^n)_{(n,I)\in \mathcal{D}_{\omega}}$ of~$D$ have the following properties:
  \begin{enumerate}[(i)]
    \item\label{thm:diagonalization:i} $|d_I^n|\le \|T\|$ for all $(n,I)\in \mathcal{D}_{\omega}$.
    \item\label{thm:diagonalization:ii} For every~$n\in \mathbb{N}_0$, there exist scalars $\alpha_0^n,\alpha_1^n,\dots,\alpha_n^n\in \mathcal{A}(\operatorname{diag}(T))$ such that $|d_I^n -\nolinebreak \alpha_k^n| \le 8^{-n}\eta_n$ for all $I\in \mathcal{D}_k$, $0\le k\le n$.
  \end{enumerate}
  In particular, if~$T$ has $\delta$-large positive diagonal for some~$\delta > 0$, then we have $\alpha_k^n\ge \delta$ for all $0\le k\le n$ and hence, by~\eqref{thm:diagonalization:ii}, $d_I^n \ge \delta - 8^{-n}\eta_n$ for all~$(n,I)\in \mathcal{D}_{\omega}$.
\end{thm}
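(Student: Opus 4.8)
The plan is to obtain the factorization through the operators $\hat A,\hat B$ associated with a faithful Haar system produced by \Cref{pro:diagonalization}. Concretely, I would first fix a family of positive reals $(\eta_{n,m})_{n,m\in\mathbb N_0}$ with the two properties that $\eta_{n,m}\le\min(\eta_n,\eta_m)$ and that the weighted double series $\sum_{n,m}\eta_{n,m}\,4^{n+1}4^{m+1}$ is as small as we like; for instance
\[
  \eta_{n,m}=\min\Bigl(\eta_n,\,\eta_m,\ \tfrac{\eta}{2}\,4^{-(n+1)}4^{-(m+1)}2^{-n-m-2}\Bigr)
\]
works. Applying \Cref{pro:diagonalization} to $T$ with this family yields a faithful Haar system $(b_I^n)_{(n,I)\in\mathcal D_\omega}$ in $Y_\omega$ with $|\langle b_I^n,Tb_J^m\rangle|\le\eta_{n,m}$ for $(n,I)\neq(m,J)$, together with diagonal values $d_I^n=\langle b_I^n,Tb_I^n\rangle/|I|$ satisfying (i) and (ii) of that proposition; since $\eta_{n,n}\le\eta_n$, property (ii) already delivers the bound $|d_I^n-\alpha_k^n|\le 8^{-n}\eta_n$ demanded here. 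I then take $B=\hat B$ and $A=\hat A$ as in \Cref{thm:operators-A-B}, so that $\|A\|=\|B\|=1$ and $AB=I_{Y_\omega}$, and let $D$ be the diagonal operator with $Dh_I^n=d_I^nh_I^n$.

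The heart of the argument is the estimate $\|\hat A T\hat B-D\|\le\eta$. Since $\hat Bh_J^m=b_J^m$, one computes on $H_\omega$ that $\hat AT\hat B h_J^m=\sum_{(n,I)}\tfrac{\langle b_I^n,Tb_J^m\rangle}{|I|}h_I^n$, whose $(m,J)$-term is exactly $d_J^m h_J^m$; hence $E:=\hat AT\hat B-D$ has vanishing diagonal and off-diagonal entries of modulus at most $\eta_{n,m}/|I|$. For $x=\sum_{(m,J)}a_J^mh_J^m\in H_\omega$ I would write $Ex=\sum_{(n,I)}c_I^n h_I^n$ and bound crudely by the triangle inequality, $\|Ex\|_Y\le\sum_{(n,I)}|c_I^n|\,\|h_I^n\|_Y$. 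Using monotonicity of the basis (\Cref{lem:schauder-basis}) in the form $|a_J^m|\,\|h_J^m\|_Y\le 2\|x\|_Y$, the off-diagonal bounds, $\|h_I^n\|_Y=\|h_I\|_X$ (\Cref{pro:HSHS-1d}), and the elementary estimates $\|h_I\|_X\le\|h_I\|_{L^\infty}=1$ and $\|h_J\|_X\ge\|h_J\|_{L^1}=|J|$ (\Cref{pro:HS-1d}), this reduces to
\[
  \|Ex\|_Y\le 2\|x\|_Y\sum_{n,m}\eta_{n,m}\Bigl(\sum_{I\in\mathcal D_{\le n}}\tfrac1{|I|}\Bigr)\Bigl(\sum_{J\in\mathcal D_{\le m}}\tfrac1{|J|}\Bigr)\le 2\|x\|_Y\sum_{n,m}\eta_{n,m}4^{n+1}4^{m+1},
\]
because $\sum_{I\in\mathcal D_{\le n}}|I|^{-1}=\sum_{k=0}^n4^k<4^{n+1}$. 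By the choice of $(\eta_{n,m})$ the right-hand side is at most $\eta\|x\|_Y$, so $E$ extends to a bounded operator with $\|E\|\le\eta$.

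It follows that $D=\hat AT\hat B-E$ is bounded; this is worth emphasizing, since the bound $|d_I^n|\le\|T\|$ alone does \emph{not} guarantee boundedness of a diagonal operator on $Y_\omega$, and here boundedness is inherited from the factorization rather than read off the entries. Because $AB=\hat A\hat B=I_{Y_\omega}$, $\|A\|\,\|B\|=1$, and $\|D-ATB\|=\|E\|\le\eta$, the operator $D$ projectionally factors through $T$ with constant $1$ and error $\eta$ in the sense of \Cref{dfn:factorization-modes}, and properties (i),(ii) are inherited verbatim from \Cref{pro:diagonalization}. For the final clause, if $T$ has $\delta$-large positive diagonal then every element of $\operatorname{diag}(T)$ is $\ge\delta$, hence so is every average, so each $\alpha_k^n\in\mathcal A(\operatorname{diag}(T))$ satisfies $\alpha_k^n\ge\delta$, and (ii) yields $d_I^n\ge\delta-8^{-n}\eta_n$. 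I expect the main obstacle to be organizing the summability estimate correctly: the number of basis vectors per level and the ratio $\|h_I\|_Y/|I|$ each contribute a factor growing like $4^n$, so the off-diagonal parameters must be chosen to decay faster than $16^{-n}$ while simultaneously respecting $\eta_{n,n}\le\eta_n$ so that the level-wise stabilization survives.
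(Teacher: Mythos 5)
Your proposal is correct and follows essentially the same route as the paper's proof: both apply \Cref{pro:diagonalization} with a summable family $(\eta_{n,m})$ satisfying $\eta_{n,n}\le\eta_n$, take $D$ to be the diagonal operator with entries $d_I^n=\langle b_I^n,Tb_I^n\rangle/|I|$, factor via the operators $\hat A,\hat B$ of \Cref{thm:operators-A-B}, and bound $\|\hat AT\hat B-D\|$ using the off-diagonal estimates together with $|a_J^m|\,|J|\le 2\|x\|_Y$ and $\|h_I^n\|_Y\le 1$. The only differences are cosmetic: you make the admissible choice of $\eta_{n,m}$ explicit (the paper's condition $\sum_{(n,I),(m,J)}\eta_{n,m}/(|I||J|)\le\eta/2$ is exactly your $4^{n+1}4^{m+1}$-weighted summability) and you organize the double sum by output index rather than input index.
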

\begin{proof}
  Fix an operator~$T$ and~$\eta > 0$. Then we can find numbers $\eta_{n,m} > 0$ for $n,m \in \mathbb{N}_0$ such that
  \begin{equation}\label{eq:5}
    \sum_{(n,I),\, (m,J)\in \mathcal{D}_{\omega}} \frac{\eta_{n,m}}{|I| |J|} \le \frac{\eta}{2}
    \qquad \text{and} \qquad
    \eta_{n,n}\le \eta_n,\quad n\in \mathbb{N}_0.
  \end{equation}
  Let $(b_I^n)_{(n,I)\in \mathcal{D}_{\omega}}$ be the faithful Haar system in~$Y_{\omega}$ obtained from \Cref{pro:diagonalization}. Define $D\colon H_{\omega}\to H_{\omega}$ as the linear extension of $D h_I^n = d_I^nh_I^n$, $(n,I)\in \mathcal{D}_{\omega}$, where
  \begin{equation*}
    d_I^n = \frac{\langle b_I^n, T b_I^n \rangle}{|I|},
    \qquad (n,I)\in \mathcal{D}_{\omega}.
  \end{equation*}
  By \Cref{thm:operators-A-B}, the operators $\hat{A},\hat{B}\colon Y_{\omega}\to Y_{\omega}$ associated with the system~$(b_I^n)_{(n,I)\in \mathcal{D}_{\le n}}$ satisfy $\|\hat{A}\| = \|\hat{B}\| = 1$ and~$\hat{A}\hat{B} = I_{Y_{\omega}}$.
  For every $(m,J)\in \mathcal{D}_{\omega}$, we have
  \begin{align*}
    \|(\hat{A}T\hat{B} - D)h_J^m\|_Y
    &= \Bigl\| \sum_{(n,I)\in \mathcal{D}_{\omega}} \frac{\langle b_I^n, T b_J^m \rangle}{|I|}h_I^n - \frac{\langle b_J^m, T b_J^m \rangle}{|J|}h_J^m \Bigr\|_Y\\
    &\le \sum_{\substack{(n,I)\in \mathcal{D}_{\omega}\\ (n,I)\ne (m,J)}} \frac{|\langle b_I^n, T b_J^m \rangle|}{|I|} \|h_J^m\|_Y
    \le \sum_{\substack{(n,I)\in \mathcal{D}_{\omega}\\ (n,I)\ne (m,J)}} \frac{\eta_{n,m}}{|I|}.
  \end{align*}
  Now let $x = \sum_{(m,J)\in \mathcal{D}_{\omega}} a_J^mh_J^m \in H_{\omega}$. Then, applying \Cref{pro:HS-1d}~\eqref{pro:HS-1d:i} and \Cref{pro:HSHS-1d}~\eqref{pro:HSHS-1d:disjointly-supp}, and using that~$(h_J^m)_{(m,J)\in \mathcal{D}_{\omega}}$ is a monotone Schauder basis of~$Y_{\omega}$, we have $|J||a_J^m| = \|a_J^mh_J^m\|_{L^1}\le \|a_J^mh_J^m\|_Y \le  2\|x\|_Y$ for all~$(m,J)\in \mathcal{D}_{\omega}$. This implies that
  \begin{align*}
    \|(\hat{A}T\hat{B} - D)x\|_Y
    &\le \sum_{(m,J)\in \mathcal{D}_{\omega}} |a_J^m| \|(\hat{A}T\hat{B} - D) h_J^m\|_Y\\
    &\le 2 \|x\|_Y \sum_{(m,J)\in \mathcal{D}_{\omega}} \sum_{\substack{(n,I)\in \mathcal{D}_{\omega}\\ (n,I)\ne (m,J)}} \frac{\eta_{n,m}}{|I| |J|}\le \eta \|x\|_Y,
  \end{align*}
  where we used~\eqref{eq:5} in the last step. Properties~\eqref{thm:diagonalization:i} and~\eqref{thm:diagonalization:ii} of this theorem follow from the corresponding statements in \Cref{pro:diagonalization}. Thus, the continuous extension of~$D$ has the desired properties.
\end{proof}

\begin{rem}
  In the following, we describe an alternative way to construct an almost faithful Haar system~$(b_I^n)_{(n,I)\in \mathcal{D}_{\omega}}$ in~$Y_{\omega}$ which satisfies the properties stated in \Cref{pro:diagonalization} and hence yields a diagonalization result. This approach is based on the probabilistic arguments first used by R.~Lechner~\cite{MR3990955}. Recall that a refinement of these probabilistic techniques was recently applied by the first author and P.~Motakis~\cite{MR4839586} to prove factorization results in the Bourgain–Rosenthal–Schechtman $R_{\omega}^p$~space, $1<p< \infty$, and by the second author~\cite{MR4884827} to prove finite-dimensional, quantitative factorization results in Haar system Hardy spaces.
  However, to combine these approaches and apply them in our setting, we need the additional assumption that $\lim_{n\to \infty}\|\chi_{[0,2^{-n})}\|_Y  = \lim_{n\to \infty}\|\chi_{[0,2^{-n})}\|_{Y^{*}}= 0$ (which excludes, for example, the $L^1$, $H^1$, $L^{\infty}$ and $SL^{\infty}$ norm).

  First, we fix a strictly increasing function~$m\colon \mathbb{N}_0\to \mathbb{N}_0$ (to be determined later), and we define $N(n) = m(n) + n$ for all $n\in \mathbb{N}_0$. Assume that~$b_J^k$ has already been constructed for all~$(k,J)\in \mathcal{D}_{\omega}$ with $k < n$. We will construct a randomized faithful Haar system~$(\hat{h}_I^n(\theta))_{I\in \mathcal{D}_{\le n}}$ in~$Y_{N(n)}$ with frequencies $m(n), m(n) + 1, \dots, N(n)$. To this end, let $\theta = (\theta_K)_{K\in \mathcal{D}_{\le N(n)}}$ be chosen uniformly at random from~$\{ \pm 1 \}^{\mathcal{D}_{\le N(n)}}$,
  and denote the corresponding uniform measure, expected value and variance by $\mathbb{P}$, $\mathbb{E}$, and~$\mathbb{V}$, respectively.
  Now, as in \cite{MR4884827}, we recursively define
  \begin{equation*}
    \hat{h}_I^n(\theta) = \sum_{K\in \mathcal{B}_I(\theta)} \theta_K h_K,\qquad I\in \mathcal{D}_{\le n},\ \theta\in \{ \pm 1 \}^{\mathcal{D}_{\le N(n)}}
  \end{equation*}
  where $\mathcal{B}_{[0,1)}(\theta) = \mathcal{D}_{m(n)}$ and
  \begin{equation}\label{eq:12}
    \mathcal{B}_{I^{\pm }}(\theta) = \bigl\{ K\in \mathcal{D}_{m(n) + k + 1} : K\subset [\hat{h}_I^n(\theta) = \pm 1]\bigr\},\quad I\in \mathcal{D}_k,\ k=0,\dots,n-1.
  \end{equation}
  Moreover, we define the corresponding functions $b_I^n(\theta) = J_{N(n)} \hat{h}_I^n(\theta)\in H_{\omega}^{N(n)}$, $I\in \mathcal{D}_{\le n}$.
  Note that in contrast to the inductive randomization of single functions~$b_I^n$ in~\cite{MR4839586}, here all functions $b_I^n(\theta)$, $I\in \mathcal{D}_{\le n}$, are randomized at the same time.
  Now consider the following random variables:
  \begin{equation*}
    X_{I,J}(\theta) = \langle  b_{I}^{n}(\theta) , T b_{J}^{n}(\theta) \rangle, \qquad   I,J \in \mathcal{D}_{\leq n},
  \end{equation*}
  and, for $0\le k < n$ and $J \in \mathcal{D}_{\leq k}$, $I \in \mathcal{D}_{\leq n}$:
  \begin{equation*}
    Y_{b_J^k,I}(\theta) = \langle  b_J^k , Tb_I^n (\theta) \rangle \qquad \text{ and } \qquad W_{I,b_J^k}(\theta) = \langle b_I^n (\theta), Tb_J^k  \rangle.
  \end{equation*}
  It follows from \cite[Lemma~3.1]{MR4884827} and \Cref{rem:isometric-isomorphism-Jn} that
  \begin{equation*}
    \mathbb{V}(X_{I,J}) \leq 3\| T \|^2 2^{-m(n) / 2}, \qquad  I, J \in \mathcal{D}_{\leq n},
  \end{equation*}
  as well as $\mathbb{E}(X_{I,J}) = 0$ whenever $I\ne J$. The expected value of~$X_{I,I}$ is given by an explicit formula in~\cite{MR4884827}, which implies that it only depends on~$|I|$ (rather than~$I$), and it is always contained in $\mathcal{A}(\operatorname{diag}(T))$.
  Using the techniques from~\cite{MR4839586,MR4884827}, one can also show that for all $k < n$ and $J\in \mathcal{D}_{\le k}$, $I\in \mathcal{D}_{\le n}$,
  \begin{equation*}
    \mathbb{E}(Y_{b_J^k,I}) = \mathbb{E}(W_{I,b_J^k}) = 0
  \end{equation*}
  as well as
  \begin{equation*}
    \mathbb{V}(Y_{b_J^k,I}) \leq \|T\|^2 \|h_{[0,2^{-m(n)})}\|_Y \qquad \text{and} \qquad \mathbb{V}(W_{I,b_J^k}) \leq \|T\|^2 \|h_{[0,2^{-m(n)})}\|_{Y^{*}}.
  \end{equation*}
  Since we assumed that $\lim_{m\to \infty}\|\chi_{[0,2^{-m})}\|_Y  = \lim_{m\to \infty}\|\chi_{[0,2^{-m})}\|_{Y^{*}}= 0$,
  all variances can be made arbitrarily small by choosing $m(n)$ sufficiently large. Then, as in~\cite{MR4839586}, Chebyshev's inequality implies that for some choice of the signs~$\theta$, the above random variables are indeed very close to their respective expected values, which in turn yields a system~$(b_I^n)_{(n,I)\in \mathcal{D}_{\omega}}$ with the required properties for \Cref{pro:diagonalization}.
\end{rem}

\section{Reduction to a scalar operator}

In this section, we start with the diagonal and level-wise stabilized operator~$D\colon Y_{\omega}\to Y_{\omega}$ constructed in the previous section, and our goal is to further stabilize its entries and reduce~$D$ to a constant multiple of the identity~$cI_{Y_{\omega}}$. As before, our proof is based on a finite-dimensional result which is contained in the proof of~\cite[Proposition~4.1]{MR4884827}. We state this result below in a slightly modified and streamlined version, and for convenience, we provide a proof.

\begin{lem}\label{lem:stabilization-finite-dimensional}
  Let~$Y$ be a Haar system Hardy space, and let~$\Gamma,\eta > 0$. Moreover, let $n,N\in \mathbb{N}_0$ be chosen so that
  \begin{equation}\label{eq:13}
    N\ge N_1(n,\Gamma,\eta) := n \biggl\lceil \frac{2\Gamma}{\eta} \biggr\rceil + 1.
  \end{equation}
  Let $D\colon Y_N\to Y_N$ be a Haar multiplier with entries~$(d_I)_{I\in \mathcal{D}_{\le N}}$, and assume that there are scalars $\alpha_0,\alpha_1,\dots,\alpha_N\in [-\Gamma,\Gamma]$ and~$\xi > 0$ such that $|d_I - \alpha_k| \le \xi$ for all $I\in \mathcal{D}_k$, $0\le k\le N$. Then there exists a finite faithful Haar system~$(\hat{h}_I)_{I\in \mathcal{D}_{\le n}}$ with frequencies less than or equal to~$N$ and a constant~$c\in \{ \alpha_0,\alpha_1,\dots,\alpha_N \}$ such that the numbers
  \begin{equation*}
    \hat{d}_I = \frac{\langle \hat{h}_I, D \hat{h}_I \rangle}{|I|},\qquad I\in \mathcal{D}_{\le n}
  \end{equation*}
  satisfy $|\hat{d}_I - c| \le \eta + \xi$ for all $I\in \mathcal{D}_{\le n}$.
\end{lem}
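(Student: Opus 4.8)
The plan is to reduce the statement to a pigeonhole argument on the level constants $\alpha_0,\dots,\alpha_N$, after first identifying what the diagonal entries $\hat{d}_I$ of a faithful Haar system look like when $D$ is a Haar multiplier. First I would record the key computation. Since $D$ is a Haar multiplier, $Dh_K = d_Kh_K$ for all $K\in \mathcal{D}_{\le N}$, and by orthogonality of the Haar system we have $\langle h_K, h_L\rangle = |K|\delta_{K,L}$. If $(\hat{h}_I)_{I\in \mathcal{D}_{\le n}}$ is a faithful Haar system with frequencies $k_0<k_1<\dots<k_n\le N$, then for $I\in \mathcal{D}_i$ we may write $\hat{h}_I = \sum_{K\in \mathcal{B}_I}\theta_Kh_K$ with $\mathcal{B}_I\subset \mathcal{D}_{k_i}$, so that $\theta_K^2 = 1$ and
\[
  \langle \hat{h}_I, D\hat{h}_I\rangle = \sum_{K\in \mathcal{B}_I} d_K|K|.
\]
Dividing by $|I| = \sum_{K\in \mathcal{B}_I}|K|$ shows that $\hat{d}_I$ is simply the \emph{average} of the entries $d_K$ over $K\in \mathcal{B}_I$. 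Since every such $K$ lies in $\mathcal{D}_{k_i}$, the hypothesis gives $|d_K - \alpha_{k_i}|\le \xi$, whence $|\hat{d}_I - \alpha_{k_i}|\le \xi$ for every $I\in \mathcal{D}_i$. The signs $\theta_K$ play no role in the diagonal, which is why only the frequencies will matter.

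This reduces the problem to choosing the frequencies so that the selected level constants cluster. I would set $M = \lceil 2\Gamma/\eta\rceil$ and partition $[-\Gamma,\Gamma]$ into $M$ consecutive subintervals, each of length $2\Gamma/M\le \eta$. The values $\alpha_0,\dots,\alpha_N$ all lie in $[-\Gamma,\Gamma]$, and since $N+1\ge N_1(n,\Gamma,\eta)+1 = nM+2$, the pigeonhole principle produces a subinterval containing at least $\lceil (nM+2)/M\rceil \ge n+1$ of these values. Picking $n+1$ indices $k_0<k_1<\dots<k_n$ whose constants $\alpha_{k_i}$ lie in this common subinterval, and setting $c = \alpha_{k_0}$, we get $|\alpha_{k_i} - c|\le \eta$ for all $i$, since both values lie in a subinterval of length at most $\eta$.

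Finally I would assemble the two pieces. Using \Cref{rem:faithful-from-rademachers}, I construct a finite faithful Haar system $(\hat{h}_I)_{I\in \mathcal{D}_{\le n}}$ with the chosen frequencies $k_0<\dots<k_n$; since all $k_i\le N$, this system is contained in $Y_N$, so $D$ may be applied to it. The triangle inequality then yields
\[
  |\hat{d}_I - c| \le |\hat{d}_I - \alpha_{k_i}| + |\alpha_{k_i} - c| \le \xi + \eta
\]
for every $I\in \mathcal{D}_i$ and $0\le i\le n$, which is exactly the claimed bound, with $c\in \{\alpha_0,\dots,\alpha_N\}$ as required. There is no genuine analytic obstacle here: the proof rests on the elementary observation that $\hat{d}_I$ is an average of $d_K$ over a single Haar level, together with the pigeonhole principle. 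The only point requiring care is the bookkeeping in the pigeonhole count so that the threshold matches the stated value $N_1 = n\lceil 2\Gamma/\eta\rceil + 1$, and ensuring the $n+1$ selected frequencies can be ordered strictly increasingly (which is automatic, since they are distinct indices in $\{0,\dots,N\}$).
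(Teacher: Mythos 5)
Your proposal is correct and follows essentially the same route as the paper's proof: a pigeonhole argument on the level constants $\alpha_0,\dots,\alpha_N$ over a partition of $[-\Gamma,\Gamma]$ into $\lceil 2\Gamma/\eta\rceil$ subintervals, followed by the observation that for a faithful Haar system with the selected frequencies, each $\hat{d}_I$ is a weighted average of the entries $d_K$, $K\in \mathcal{B}_I$, since $|\mathcal{B}_I^{*}| = |I|$. The only (immaterial) difference is the order of the triangle inequality: the paper first bounds $|d_K - c|\le \eta+\xi$ and then averages, whereas you first average to get $|\hat{d}_I - \alpha_{k_i}|\le \xi$ and then add $|\alpha_{k_i}-c|\le \eta$.
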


\begin{proof}
  Divide the interval~$[-\Gamma,\Gamma]$ into $\lceil 2\Gamma/\eta \rceil$ subintervals of length at most~$\eta$. Then, by~\eqref{eq:13} and the pigeonhole principle, we can find integers $0\le k_0<k_1<\dots<k_n\le N$
  such that
  \begin{equation*}
    |\alpha_{k_i} - \alpha_{k_0}| \le \eta,
    \qquad i=0,\dots,n.
  \end{equation*}
  Hence, after putting~$c = \alpha_{k_0}$, by our assumption on the numbers~$\alpha_k$, we have
  \begin{equation}\label{eq:14}
    |d_K - c| \le \eta + \xi,\qquad K\in \mathcal{D}_{k_0}\cup \dots\cup \mathcal{D}_{k_n}.
  \end{equation}
  Now let~$(\hat{h}_I)_{I\in \mathcal{D}_{\le n}}$ be a finite faithful Haar system with frequencies~$k_0,\dots,k_n$. Then the numbers~$\hat{d}_I$ defined above satisfy
  \begin{equation*}
    \hat{d}_I = \frac{\langle \hat{h}_I, D \hat{h}_I \rangle}{|I|} = \sum_{K\in \mathcal{B}_I} d_K \frac{|K|}{|I|},\qquad I\in \mathcal{D}_{\le n}.
  \end{equation*}
  Together with~\eqref{eq:14} and the fact that~$|\mathcal{B}_I^{*}| = |I|$ for all~$I\in \mathcal{D}_{\le n}$, this yields
  \begin{equation*}
    |\hat{d}_I - c| \le \eta + \xi,\qquad I\in \mathcal{D}_{\le n}.\qedhere
  \end{equation*}
\end{proof}

\begin{pro}\label{pro:block-stabilization}
  Let $Y$ be a Haar system Hardy space, and let $D\colon Y_{\omega}\to Y_{\omega}$ be a bounded diagonal operator with entries~$(d_I^n)_{(n,I)\in \mathcal{D}_{\omega}}$. Moreover, let~$\Gamma,\eta > 0$, and assume that for every~$n\in \mathbb{N}_0$, there exist numbers $\alpha_0^n,\alpha_1^n,\dots,\alpha_n^n\in [-\Gamma,\Gamma]$ such that $|d_I^n -\nolinebreak \alpha_k^n| \le 8^{-n}\eta$ for all $I\in \mathcal{D}_k$, $0\le k\le n$.
  Then there exists a sequence of scalars $(c_n)_{n=0}^{\infty}$ and a bounded diagonal operator~$C\colon Y_{\omega}\to Y_{\omega}$ such that:
  \begin{enumerate}[(i)]
    \item $C$ projectionally factors through~$D$ with constant~$1$ and error~$16\eta$.
    \item $Ch_I^n = c_n h_I^n$ for all~$(n,I)\in \mathcal{D}_{\omega}$.
    \item For every~$n\in \mathbb{N}_0$, there exist integers $0\le k\le m$ such that~$c_n = \alpha_k^m$.
  \end{enumerate}
\end{pro}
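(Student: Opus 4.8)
The plan is to build a single faithful Haar system $(b_I^n)_{(n,I)\in\mathcal{D}_{\omega}}$ in $Y_{\omega}$ whose $n$-th block lives entirely inside one high component $H_{\omega}^{M(n)}$ for a suitably chosen strictly increasing function $M\colon\mathbb{N}_0\to\mathbb{N}_0$, and then to use the associated operators $\hat{A},\hat{B}$ from \Cref{thm:operators-A-B} to realize $C$ as $\hat{A}D\hat{B}$ up to a small error. First I would fix $M$ strictly increasing so that $M(n)\ge N_1(n,\Gamma,4^{-n}\eta)$ and $8^{-M(n)}\le 4^{-n}$ for every $n$. For each $n$ I apply \Cref{lem:stabilization-finite-dimensional} to the component multiplier $D_{M(n)} = J_{M(n)}^{-1}P_{M(n)}DJ_{M(n)}$ on $Y_{M(n)}$, which is the Haar multiplier with entries $d_I^{M(n)}$ and which inherits the hypothesis $|d_I^{M(n)}-\alpha_k^{M(n)}|\le 8^{-M(n)}\eta$; taking $\xi=8^{-M(n)}\eta$ and tolerance $4^{-n}\eta$, the lemma yields a finite faithful Haar system $(\hat{h}_I^n)_{I\in\mathcal{D}_{\le n}}$ with frequencies at most $M(n)$ together with a constant $c_n\in\{\alpha_0^{M(n)},\dots,\alpha_{M(n)}^{M(n)}\}\subset[-\Gamma,\Gamma]$. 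This already gives property~(iii) with $m=M(n)$. Transporting via $b_I^n=J_{M(n)}\hat{h}_I^n\in H_{\omega}^{M(n)}$ and using \Cref{rem:isometric-isomorphism-Jn}, the diagonal value $\hat{d}_I^n:=\langle b_I^n,Db_I^n\rangle/|I|=\langle\hat{h}_I^n,D_{M(n)}\hat{h}_I^n\rangle/|I|$ satisfies $|\hat{d}_I^n-c_n|\le 4^{-n}\eta+8^{-M(n)}\eta\le 2\cdot 4^{-n}\eta$. Since $M$ is strictly increasing, $(b_I^n)_{(n,I)\in\mathcal{D}_{\omega}}$ is a genuine faithful Haar system in $Y_{\omega}$ in the sense of \Cref{dfn:faithful-system-independent-sum}.

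Next I would verify that $\hat{A}D\hat{B}$ is \emph{exactly} diagonal, i.e.\ that the off-diagonal pairings $\langle b_{I'}^{n'},Db_I^n\rangle$ vanish for $(n',I')\ne(n,I)$. If $n'\ne n$, then $Db_I^n$ lies in the component $H_{\omega}^{M(n)}$ (as $D$ is diagonal and $b_I^n$ sits there) while $b_{I'}^{n'}$ lies in the independent component $H_{\omega}^{M(n')}$; since every generating Haar function $h_K^{M(n)}$ and $h_L^{M(n')}$ has integral zero, independence gives $\langle h_K^{M(n)},h_L^{M(n')}\rangle=\bigl(\int h_K^{M(n)}\bigr)\bigl(\int h_L^{M(n')}\bigr)=0$, so the pairing vanishes. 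If $n'=n$ but $I'\ne I$, the Haar supports of $b_I^n$ and $b_{I'}^n$ are disjoint and $D$ is a Haar multiplier, so the pairing again vanishes. Hence $\hat{A}D\hat{B}h_I^n=\hat{d}_I^n h_I^n$ for all $(n,I)\in\mathcal{D}_{\omega}$.

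To conclude, let $E\colon Y_{\omega}\to Y_{\omega}$ be the diagonal operator with $Eh_I^n=(\hat{d}_I^n-c_n)h_I^n$. Because $|\hat{d}_I^n-c_n|\le 2\cdot 4^{-n}\eta=8^{-1}4^{-n}(16\eta)$, \Cref{lem:diagonal-operator-bounded} (applied with error $16\eta$) shows that $E$ is bounded with $\|E\|\le 16\eta$. I then \emph{define} $C:=\hat{A}D\hat{B}-E$, which is bounded as a difference of bounded operators and satisfies $Ch_I^n=c_n h_I^n$ for all $(n,I)$, giving property~(ii). Finally, since $\hat{A}\hat{B}=I_{Y_{\omega}}$ and $\|\hat{A}\|=\|\hat{B}\|=1$ by \Cref{thm:operators-A-B}, and $\|C-\hat{A}D\hat{B}\|=\|E\|\le 16\eta$, the operators $A=\hat{A}$, $B=\hat{B}$ witness that $C$ projectionally factors through $D$ with constant $1$ and error $16\eta$, which is property~(i).

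The step I expect to be the main obstacle is the boundedness of the error operator $E$. Uniform smallness of its entries is \emph{not} enough, since Haar multipliers with bounded entries are already unbounded on $L^1$-type components, so one cannot simply bound $E$ by a multiplier norm on each block. The resolution is to run \Cref{lem:stabilization-finite-dimensional} with an $n$-dependent, geometrically shrinking tolerance $4^{-n}\eta$ (paid for by allowing the frequencies, hence the components $M(n)$, to grow), so that the entries of $E$ decay like $4^{-n}$ and \Cref{lem:diagonal-operator-bounded} becomes applicable. Together with the exact diagonalization coming from the cross-component independence in the second step, this is the crux of the argument.
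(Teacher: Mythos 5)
Your proposal is correct and follows essentially the same route as the paper: apply \Cref{lem:stabilization-finite-dimensional} component-wise (via the isometries $J_{M(n)}$) with geometrically shrinking tolerances, use the operators $\hat{A},\hat{B}$ of \Cref{thm:operators-A-B} associated with the resulting faithful Haar system, observe that $\hat{A}D\hat{B}$ is exactly diagonal with entries $\hat{d}_I^n$, and control the distance to the constant-by-blocks diagonal operator via \Cref{lem:diagonal-operator-bounded}. The only differences are cosmetic (tolerance $4^{-n}\eta$ instead of the paper's $8^{-n}\eta$, and defining $C$ as $\hat{A}D\hat{B}-E$ rather than directly as the diagonal operator with entries $c_n$, which yields the same operator); your explicit verification of the off-diagonal vanishing via independence and disjoint Haar supports fills in a step the paper only asserts.
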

\begin{proof}
  Put~$\eta_n = 8^{-n} \eta$ for~$n\in \mathbb{N}_0$. Choose a strictly increasing function $N\colon \mathbb{N}_0\to \mathbb{N}_0$ such that $N(n) \ge N_1(n,\Gamma,\eta_n)$ for every~$n\in \mathbb{N}_0$, where~$N_1$ is defined as in~\eqref{eq:13}.
  For every~$n\in \mathbb{N}_0$, consider the Haar multiplier $D_{N(n)}\colon Y_{N(n)}\to Y_{N(n)}$ defined as $D_{N(n)} = J_{N(n)}^{-1}D J_{N(n)}$ and recall that by \Cref{rem:isometric-isomorphism-Jn}, its entries are $(d_I^{N(n)})_{I\in \mathcal{D}_{\le N(n)}}$.
  Apply \Cref{lem:stabilization-finite-dimensional} to the operator~$D_{N(n)}$ (putting~$\eta = \xi = \eta_n\ge \eta_{N(n)}$) to obtain  a constant~$c_n\in \{ \alpha_0^{N(n)},\dots,\alpha_{N(n)}^{N(n)} \}$ and a finite faithful Haar system~$(\hat{h}_I^n)_{I\in \mathcal{D}_{\le n}}$ in~$Y_{N(n)}$ with the stated property. Let~$(b_I^n)_{I\in \mathcal{D}_{\le n}}$ be the corresponding system in~$H_{\omega}^{N(n)}$ given by~$b_I^n = J_{N(n)} \hat{h}_I^n$ for all~$I\in \mathcal{D}_{\le n}$. Then, according to \Cref{lem:stabilization-finite-dimensional}, the numbers
  \begin{equation*}
    \hat{d}_I^n
    = \frac{\langle b_I^n, D b_I^n \rangle}{|I|}
    = \frac{\langle \hat{h}_I^n, D_{N(n)} \hat{h}_I^n \rangle}{|I|},
    \qquad I\in \mathcal{D}_{\le n}
  \end{equation*}
  satisfy
  \begin{equation}\label{eq:16}
    |\hat{d}_I^n - c_n| \le 2\eta_n = 8^{-n-1}\cdot 16\eta,\qquad I\in \mathcal{D}_{\le n}.
  \end{equation}

  Now let $\hat{A},\hat{B}\colon Y_{\omega}\to Y_{\omega}$ denote the operators associated with the system~$(b_I^n)_{(n,I)\in \mathcal{D}_{\le n}}$, as defined in \Cref{thm:operators-A-B}. We know that $\|\hat{A}\| = \|\hat{B}\| = 1$ and~$\hat{A}\hat{B} = I_{Y_{\omega}}$. Then~$\widehat{D} = \hat{A}D\hat{B}$ is also a bounded diagonal operator on~$Y_{\omega}$, and its entries are~$(\hat{d}_I^n)_{(n,I)\in \mathcal{D}_{\omega}}$. Finally, let~$C\colon Y_{\omega}\to Y_{\omega}$ denote the diagonal operator with~$Ch_I^n = c_nh_I^n$ for all~$(n,I)\in \mathcal{D}_{\omega}$. Then, by~\eqref{eq:16} and \Cref{lem:diagonal-operator-bounded}, we have~$\|C - \hat{A}D\hat{B}\| = \|C - \widehat{D}\| \le 16\eta$.
\end{proof}

\begin{pro}\label{pro:stabilization-cluster-point}
  Let~$Y$ be a Haar system Hardy space, let~$\eta > 0$, and let~$C\colon Y_{\omega}\to Y_{\omega}$ be a bounded diagonal operator such that~$C h_I^n = c_n h_I^n$ for all $(n,I)\in \mathcal{D}_{\omega}$, where~$(c_n)_{n=0}^{\infty}$ is a sequence of scalars. Then for every cluster point~$c$ of~$(c_n)_{n=0}^{\infty}$, the operator~$cI_{Y_{\omega}}$ projectionally factors through~$C$ with constant~$1$ and error~$\eta$.
\end{pro}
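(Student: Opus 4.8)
The plan is to build the entire factorization from a single, very simple faithful Haar system in~$Y_{\omega}$ determined by the cluster point. Since $c$ is a cluster point of $(c_n)_{n=0}^{\infty}$, the set $\{ n : |c_n - c| \le \eta/2 \}$ is infinite, so I would fix a strictly increasing sequence $n_0 < n_1 < n_2 < \dots$ with $|c_{n_k} - c| \le \eta/2$ for all~$k$; as the $n_k$ are distinct non-negative integers arranged increasingly, we automatically have $n_k \ge k$. The key observation is that the system $b_I^k := h_I^{n_k}$, $I\in \mathcal{D}_{\le k}$, is a \emph{faithful} Haar system in~$Y_{\omega}$ in the sense of \Cref{dfn:faithful-system-independent-sum}: taking $N(k) = n_k$ (strictly increasing, with $N(k)\ge k$) and letting the finite faithful Haar system in~$Y_{n_k}$ be the \emph{standard} one $(h_I)_{I\in \mathcal{D}_{\le k}}$ (which is faithful in the sense of \Cref{dfn:faithful} with $\mathcal{B}_I = \{ I \}$ and all signs~$+1$, and has frequencies $0,\dots,k \le n_k$), the isometries of \Cref{rem:isometric-isomorphism-Jn} give $b_I^k = J_{n_k} h_I = h_I^{n_k}$.

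Next I would invoke \Cref{thm:operators-A-B} for this faithful Haar system to produce operators $A := \hat{A}$ and $B := \hat{B}$ on~$Y_{\omega}$ with $AB = I_{Y_{\omega}}$ and $\|A\| = \|B\| = 1$; in particular $B h_I^k = b_I^k = h_I^{n_k}$ and $A b_I^k = A B h_I^k = h_I^k$. The crucial point is that~$C$ acts as the single scalar~$c_{n_k}$ on the entire component~$H_{\omega}^{n_k}$, so that $C b_I^k = C h_I^{n_k} = c_{n_k} h_I^{n_k} = c_{n_k} b_I^k$. Combining these observations, for every $(k,I)\in \mathcal{D}_{\omega}$ we get
\begin{equation*}
  ACB\, h_I^k = A C b_I^k = c_{n_k}\, A b_I^k = c_{n_k}\, h_I^k,
\end{equation*}
so that $ACB$ is the \emph{block-scalar} operator which multiplies each component~$H_{\omega}^k$ by the constant~$c_{n_k}$. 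Consequently $cI_{Y_{\omega}} - ACB$ multiplies~$H_{\omega}^k$ by $c - c_{n_k}$.

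It then remains to estimate the error. By \Cref{lem:schauder-basis}, $(H_{\omega}^k)_{k=0}^{\infty}$ is a $1$-suppression unconditional finite-dimensional Schauder decomposition of~$Y_{\omega}$, hence $2$-unconditional, so an operator multiplying the $k$-th block by a scalar~$\lambda_k$ has norm at most $2\sup_k |\lambda_k|$. Applying this with $\lambda_k = c - c_{n_k}$ and using $|c - c_{n_k}| \le \eta/2$ yields $\|cI_{Y_{\omega}} - ACB\| \le 2 \cdot \eta/2 = \eta$. Together with $AB = I_{Y_{\omega}}$ and $\|A\|\,\|B\| \le 1$, this shows that $cI_{Y_{\omega}}$ projectionally factors through~$C$ with constant~$1$ and error~$\eta$, as required.

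The proof is short because all the analytic difficulty has been absorbed into \Cref{thm:operators-A-B}; the only genuine choices are, first, recognizing the embedding $h_I^k\mapsto h_I^{n_k}$ as arising from a trivial faithful Haar system, so that $A,B$ have norm exactly~$1$ and satisfy $AB = I_{Y_{\omega}}$, and second, tracking the factor~$2$ from the unconditionality constant of the FDD, which is precisely what forces the selection $|c_{n_k} - c|\le \eta/2$ rather than $\le \eta$. I expect the point requiring the most care to be the verification that $ACB$ is \emph{exactly} block-scalar (rather than merely approximately diagonal): this hinges on~$C$ being constant on each independent component~$H_{\omega}^{n_k}$, which makes $C b_I^k = c_{n_k} b_I^k$ hold on the nose and lets the relation $\hat{A}\hat{B} = I_{Y_{\omega}}$ collapse the composition to $c_{n_k} h_I^k$ without introducing any error term.
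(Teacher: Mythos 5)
Your proposal is correct, and its core is identical to the paper's own proof: the paper also selects a strictly increasing $N\colon \mathbb{N}_0\to\mathbb{N}_0$ with $c_{N(n)}$ close to $c$, sets $b_I^n = h_I^{N(n)}$, invokes \Cref{thm:operators-A-B} to get $\hat A,\hat B$ with $\hat A\hat B = I_{Y_\omega}$ and $\|\hat A\| = \|\hat B\| = 1$, and observes that $\hat A C\hat B$ is the block-scalar operator $h_I^n \mapsto c_{N(n)}h_I^n$. The only place where you diverge is the error estimate. The paper chooses the subsequence so that $|c_{N(n)} - c| \le 8^{-1}4^{-n}\eta$ and then applies \Cref{lem:diagonal-operator-bounded}, which bounds an arbitrary diagonal perturbation with geometrically decaying entries using nothing more than the Schauder basis property of $(h_I^n)_{(n,I)\in\mathcal{D}_\omega}$. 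You instead take a uniform bound $|c_{n_k} - c|\le \eta/2$ and exploit that the perturbation $cI_{Y_\omega} - ACB$ is \emph{constant on each block} $H_\omega^k$, so the $1$-suppression unconditionality of the FDD from \Cref{lem:schauder-basis} (hence $2$-unconditionality, hence, by convexity of $[-1,1]^m$ over sign vectors, a bound of $2\sup_k|\lambda_k|$ for block multipliers) gives $\|cI_{Y_\omega} - ACB\|\le \eta$. This is legitimate --- indeed the paper uses exactly this block-multiplier bound for the operator $\tilde M$ in the proof of \Cref{thm:operators-A-B-almost-faithful} --- and it buys a slightly cleaner selection of the subsequence (no geometric decay needed), at the cost of relying on the FDD structure rather than on the more general-purpose \Cref{lem:diagonal-operator-bounded}, which applies even when the diagonal entries vary within a block.
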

\begin{proof}
  Let~$c$ be a cluster point of the sequence~$(c_n)_{n=0}^{\infty}$. Then we can find a strictly increasing function $N\colon \mathbb{N}_0\to \mathbb{N}_0$ such that $|c_{N(n)} - c| \le 8^{-1}4^{-n}\eta$ for all~$n\in \mathbb{N}_0$. Now, for~$(n,I)\in \mathcal{D}_{\omega}$, put
  \begin{equation*}
    b_I^n = h_I^{N(n)} \in H_{\omega}^{N(n)}.
  \end{equation*}
  Let~$\hat{A},\hat{B}\colon Y_{\omega}\to Y_{\omega}$ denote the operators associated with the system~$(b_I^n)_{(n,I)\in \mathcal{D}_{\omega}}$, then we know from \Cref{thm:operators-A-B} that~$\|\hat{A}\| = \|\hat{B}\| = 1$ and~$\hat{A}\hat{B} = I_{Y_{\omega}}$. Moreover, $\hat{A}C\hat{B}$ is a diagonal operator with~$\hat{A}C\hat{B} h_I^n = c_{N(n)}h_I^n$ for all~$(n,I)\in \mathcal{D}_{\omega}$. Since~$|c_{N(n)} - c|\le 8^{-1}4^{-n}\eta$ for all $n\in \mathbb{N}_0$, \Cref{lem:diagonal-operator-bounded} implies that~$\|c I_{Y_{\omega}} - \hat{A}C\hat{B}\|\le \eta$.
\end{proof}

By combining the above results with \Cref{thm:diagonalization}, we obtain the following theorem.

\begin{thm}\label{thm:constant-multiple-of-identity}
  Let~$Y$ be a Haar system Hardy space, and assume that the sequence of standard Rademacher functions~$(r_n)_{n=0}^{\infty}$ is weakly null in~$Y$. Let~$T\colon Y_{\omega}\to Y_{\omega}$ be a bounded linear operator, and let~$\eta > 0$. Then there exists a scalar~$c\in \overline{\mathcal{A}(\operatorname{diag}(T))}$ such that $cI_{Y_{\omega}}$ projectionally factors through~$T$ with constant~$1$ and error~$\eta$. If~$T$ has $\delta$-large positive diagonal with respect to~$(h_I^n)_{(n,I)\in \mathcal{D}_{\omega}}$ for some~$\delta > 0$, then~$c \ge \delta$.
\end{thm}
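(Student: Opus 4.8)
The plan is to chain together the three reduction results of this section using the transitivity of projectional factorization (\Cref{rem:transitivity}). First I would apply \Cref{thm:diagonalization} to~$T$, fixing the factorization error to be~$\eta/3$ and the level-wise parameters to be the constant sequence~$\eta_n = \eta/48$. This produces a diagonal operator $D\colon Y_{\omega}\to Y_{\omega}$ that projectionally factors through~$T$ with constant~$1$ and error~$\eta/3$, whose entries~$(d_I^n)$ satisfy $|d_I^n|\le \|T\|$ and admit, for each~$n$, scalars $\alpha_0^n,\dots,\alpha_n^n\in \mathcal{A}(\operatorname{diag}(T))$ with $|d_I^n - \alpha_k^n|\le 8^{-n}(\eta/48)$ for all $I\in \mathcal{D}_k$. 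Since every diagonal entry of~$T$ lies in $[-\|T\|,\|T\|]$ (see \Cref{rem:isometric-isomorphism-Jn}) and averages preserve this bound, each~$\alpha_k^n$ lies in $[-\Gamma,\Gamma]$ with $\Gamma = \|T\| + 1$.

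Next I would feed~$D$ into \Cref{pro:block-stabilization} with this~$\Gamma$ and with the proposition's parameter equal to~$\eta/48$. This yields a sequence of scalars~$(c_n)_{n=0}^{\infty}$ and a diagonal operator~$C$ with $Ch_I^n = c_nh_I^n$ that projectionally factors through~$D$ with constant~$1$ and error $16\cdot(\eta/48) = \eta/3$, where each~$c_n$ equals some~$\alpha_k^m$ and hence lies in~$\mathcal{A}(\operatorname{diag}(T))$. In particular, the sequence~$(c_n)$ is bounded by~$\|T\|$, so it possesses at least one cluster point~$c$, which necessarily lies in~$\overline{\mathcal{A}(\operatorname{diag}(T))}$. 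Applying \Cref{pro:stabilization-cluster-point} with error~$\eta/3$ then shows that~$cI_{Y_{\omega}}$ projectionally factors through~$C$ with constant~$1$ and error~$\eta/3$.

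It remains to compose the three projectional factorizations of~$cI_{Y_{\omega}}$ through~$C$, of~$C$ through~$D$, and of~$D$ through~$T$. Since all three constants equal~$1$, \Cref{rem:transitivity} first gives that~$C$ projectionally factors through~$T$ with constant~$1$ and error $\eta/3 + 1\cdot \eta/3 = 2\eta/3$, and then that~$cI_{Y_{\omega}}$ projectionally factors through~$T$ with constant~$1$ and total error $\eta/3 + 1\cdot 2\eta/3 = \eta$, as required. Finally, for the positive-diagonal assertion, if~$T$ has $\delta$-large positive diagonal, then the last statement of \Cref{thm:diagonalization} gives $\alpha_k^n\ge \delta$ for all $k\le n$; since each~$c_n$ is one of these~$\alpha_k^m$, we have $c_n\ge \delta$ for every~$n$, and therefore every cluster point~$c$ satisfies $c\ge \delta$.

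The argument is essentially pure bookkeeping: all the substantive work---the diagonalization, the level-wise and then global stabilization of the entries, and the passage to a cluster point---has already been carried out in \Cref{thm:diagonalization}, \Cref{pro:block-stabilization}, and \Cref{pro:stabilization-cluster-point}. The only point requiring care is the consistent allocation of the total error budget~$\eta$ across the three stages (split here as~$\eta/3$ each, using that \Cref{pro:block-stabilization} amplifies its input parameter by a factor of~$16$), together with the correct application of the transitivity formula, whose error term~$\eta_1 + C_1\eta_2$ remains benign precisely because every factorization constant in the chain equals~$1$.
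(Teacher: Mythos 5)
Your proof is correct and takes essentially the same route as the paper: diagonalize via \Cref{thm:diagonalization}, stabilize level-wise via \Cref{pro:block-stabilization}, pass to a cluster point via \Cref{pro:stabilization-cluster-point}, and chain the three projectional factorizations with \Cref{rem:transitivity}. The only difference is bookkeeping: you allocate the error budget ($\eta/3$ per stage, with $\eta_n = \eta/48$ to absorb the factor $16$) so the total error is exactly $\eta$, whereas the paper runs the chain with uniform parameter $\eta$ and ends with error $18\eta$, implicitly relying on $\eta$ being arbitrary---your version is marginally tidier on that point, and you also make explicit the (paper-omitted) observation that $(c_n)$ is bounded and hence has a cluster point lying in $\overline{\mathcal{A}(\operatorname{diag}(T))}$.
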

\begin{proof}
  Let~$T\colon Y_{\omega}\to Y_{\omega}$ be a bounded linear operator, and let~$\eta > 0$. First, apply \Cref{thm:diagonalization} (setting~$\eta_n = \eta$ for all $n\in \mathbb{N}_0$) to obtain a diagonal operator~$D\colon Y_{\omega}\to Y_{\omega}$ with entries~$(d_I^n)_{(n,I)\in \mathcal{D}_{\omega}}$ such that~$D$ projectionally factors through~$T$ with constant~$1$ and error~$\eta$, as well as scalars~$\alpha_k^n\in \mathcal{A}(\operatorname{diag}(T))$, $0\le k\le n$, such that~$|d_I^n - \alpha_k^n|\le 8^{-n}\eta$ for all $I\in \mathcal{D}_k$, $0\le k\le n$. Next, put~$\Gamma = \|T\|$ and note that~$\mathcal{A}(\operatorname{diag}(T))\subset [-\Gamma,\Gamma]$ (see \Cref{rem:isometric-isomorphism-Jn}). Apply \Cref{pro:block-stabilization} to obtain a sequence of scalars~$(c_n)_{n=0}^{\infty}$ in~$\{ \alpha_k^m : k,m \in \mathbb{N}_0,\, 0\le k\le m \}$ defining a diagonal operator~$C\colon Y_{\omega}\to Y_{\omega}$ with~$Ch_I^n = c_nh_I^n$ for all~$(n,I)\in \mathcal{D}_{\omega}$ such that~$C$ projectionally factors through~$D$ with constant~$1$ and error~$16\eta$. Finally, let~$c$ be a cluster point of the sequence~$(c_n)_{n=0}^{\infty}$, then by \Cref{pro:stabilization-cluster-point}, $cI_{Y_{\omega}}$~factors through~$C$ with constant~$1$ and error~$\eta$.

By combining all these factorizations using \Cref{rem:transitivity}, we obtain that~$cI_{Y_{\omega}}$ projectionally factors through~$T$ with constant~$1$ and error~$18\eta$. If~$T$ has~$\delta$-large positive diagonal for some~$\delta > 0$, then we have~$\mathcal{A}(\operatorname{diag}(T))\subset [\delta,\infty)$ and hence~$\alpha_k^n\ge \delta$ for all~$0\le k\le n$, which implies that $c_n\ge \delta$ for all~$n\in \mathbb{N}_0$ and thus~$c\ge \delta$.
\end{proof}

As a consequence, we obtain that the space~$Y_{\omega}$ has the primary factorization property, which is the first statement of the following theorem. Another consequence is that the identity on~$Y_{\omega}$ factors through all operators with large \emph{positive} diagonal. If the basis~$(h_I^n)_{(n,I)\in \mathcal{D}_{.\omega}}$ is unconditional, then this is equivalent to the factorization property. In general, however, the factorization property requires an additional reduction step, which will be provided in the next section.

\begin{thm}\label{thm:primary-and-pos-fact-prop}
Let~$Y$ be a Haar system Hardy space, and assume that the sequence of standard Rademacher functions~$(r_n)_{n=0}^{\infty}$ is weakly null in~$Y$. Then the following assertions are true:
\begin{enumerate}[(i)]
  \item\label{thm:primary-and-pos-fact-prop:i} For every bounded linear operator $T \colon Y_{\omega} \to Y_{\omega}$, the identity~$I_{Y_{\omega}}$ factors either through~$T$ or through~$I_{Y_{\omega}} - T$ with constant~$2^+$.
  \item\label{thm:primary-and-pos-fact-prop:ii} For every~$\delta > 0$ and every bounded linear operator $T \colon Y_{\omega} \to Y_{\omega}$ with $\delta$-large \emph{positive} diagonal, the identity~$I_{Y_{\omega}}$ factors through~$T$ with constant~$(1/\delta)^+$.
\end{enumerate}
\end{thm}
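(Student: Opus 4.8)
The plan is to deduce both assertions from \Cref{thm:constant-multiple-of-identity}: for every $\eta > 0$ it furnishes a scalar $c \in \overline{\mathcal{A}(\operatorname{diag}(T))}$ together with operators $A,B$ satisfying $\|A\|\|B\| \le 1$, $AB = I_{Y_{\omega}}$ and $\|cI_{Y_{\omega}} - ATB\| \le \eta$. The key device throughout will be a Neumann-series perturbation that promotes such an approximate factorization of a \emph{nonzero} multiple $cI_{Y_{\omega}}$ to an \emph{exact} factorization of the identity, inflating the constant only by a factor tending to $1$ as $\eta \to 0$. Explicitly, whenever $|c| > \eta$, the operator $S := \tfrac{1}{c}ATB$ satisfies $\|I_{Y_{\omega}} - S\| \le \eta/|c| < 1$, hence $S$ is invertible with $\|S^{-1}\| \le (1 - \eta/|c|)^{-1}$; putting $\tilde{A} := S^{-1}\tfrac{1}{c}A$ and $\tilde{B} := B$ then gives the exact identity $\tilde{A}T\tilde{B} = I_{Y_{\omega}}$ with $\|\tilde{A}\|\|\tilde{B}\| \le |c|^{-1}(1 - \eta/|c|)^{-1}$. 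Note that this step itself uses neither projectionality nor the containment $c \in \overline{\mathcal{A}(\operatorname{diag}(T))}$.

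To prove~\eqref{thm:primary-and-pos-fact-prop:ii}, I would fix $T$ with $\delta$-large positive diagonal and apply \Cref{thm:constant-multiple-of-identity} with $\eta < \delta$; its final clause guarantees $c \ge \delta$. The perturbation above then factors $I_{Y_{\omega}}$ through $T$ with constant $\delta^{-1}(1 - \eta/\delta)^{-1}$, and letting $\eta \to 0$ yields the claimed constant $(1/\delta)^+$.

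To prove~\eqref{thm:primary-and-pos-fact-prop:i}, I would again apply \Cref{thm:constant-multiple-of-identity} to obtain a \emph{projectional} factorization of $cI_{Y_{\omega}}$ through $T$ with constant $1$ and error $\eta$. Here projectionality is essential: by \Cref{rem:factorization-identity-minus-T} it automatically yields a projectional factorization of $(1-c)I_{Y_{\omega}}$ through $I_{Y_{\omega}} - T$ with the same constant and error. Since $c + (1-c) = 1$, at least one of $|c|, |1-c|$ is $\ge 1/2$; in the former case I run the perturbation argument on $T$, in the latter on $I_{Y_{\omega}} - T$. In either case the factor $|c|^{-1}$ (resp.\ $|1-c|^{-1}$) is at most $2$ and $\eta/|c| \le 2\eta$, so the resulting constant is at most $2(1 - 2\eta)^{-1}$, which tends to $2$ as $\eta \to 0$, giving constant $2^+$.

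The substantive content is entirely contained in \Cref{thm:constant-multiple-of-identity}; what remains is bookkeeping of norms and constants. I expect the only delicate point to be the correct threading of projectionality in~\eqref{thm:primary-and-pos-fact-prop:i}: it is precisely what makes \Cref{rem:factorization-identity-minus-T} applicable and thereby forces the clean dichotomy between $T$ and $I_{Y_{\omega}} - T$ through the identity $c + (1-c) = 1$.
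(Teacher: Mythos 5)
Your proposal is correct and follows essentially the same route as the paper: both deduce everything from \Cref{thm:constant-multiple-of-identity}, use the projectionality (via \Cref{rem:factorization-identity-minus-T}, which the paper instead verifies inline through $A(I_{Y_\omega}-T)B = I_{Y_\omega} - ATB$) to pass to $I_{Y_\omega}-T$ when $c<1/2$, and upgrade the approximate factorization to an exact one by inverting $c^{-1}ATB$ via a Neumann-series estimate. The only cosmetic difference is your case split on $|c|$ versus $|1-c|$ where the paper splits on $c\ge 1/2$ versus $1-c>1/2$; the constants obtained are the same.
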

\begin{proof}
  Let $T \colon Y_{\omega} \to Y_{\omega}$ be an arbitrary bounded linear operator and let $0 < \eta < \frac{1}{2}$. By \Cref{thm:constant-multiple-of-identity}, there exists a scalar $c\in \overline{\mathcal{A}(\operatorname{diag}(T))}$ such that $cI_{Y_{\omega}}$ projectionally factors through~$T$ with constant~$1$ and error~$\eta$, i.e., there exist bounded linear operators $A, B \colon Y_{\omega}  \to Y_{\omega}$ such that $\| A|| \cdot \| B \| \leq 1$ and $\| ATB -  cI_{Y_{\omega}} \| \le \eta$.
  We consider two cases for the scalar~$c$. If $c \geq 1/2$, then
  \begin{align*}
    \| c^{-1} A T B - I_{Y_{\omega}} \| \le 2\eta <1.
  \end{align*}
  Hence, the operator $c^{-1} A T B$ is invertible with $\|( c^{-1} A T B)^{-1} \| \leq 1/(1 - 2 \eta)$. Putting  $L = ( c^{-1} A T B )^{-1} c^{-1} A$ and $R = B$, we have $LTR = I_{Y_{\omega}}$ and
  \begin{align*}
    \| L \| \cdot \| R \| \leq  \frac{2}{1 - 2\eta}.
  \end{align*}
  On the other hand, assume that $c < 1/2$. Then, since $1-c > 1/2$ and
  \begin{align*}
    \| A ( I_{Y_{\omega}} - T) B - (1- c) I_{Y_{\omega}} \| \le \eta,
  \end{align*}
  we achieve the same conclusion for $I_{Y_{\omega}}-T$ instead of $T$. This completes the proof of~\eqref{thm:primary-and-pos-fact-prop:i}.

  To prove~\eqref{thm:primary-and-pos-fact-prop:ii}, let $T \colon Y_{\omega} \to Y_{\omega}$ be a bounded linear operator with $\delta$-positive large diagonal for some $\delta > 0$. By \Cref{thm:constant-multiple-of-identity}, for every~$\eta > 0$, there exists a scalar $c\in \overline{\mathcal{A}(\operatorname{diag}(T))}$ with $c \geq \delta$ such that $cI_{Y_{\omega}}$ projectionally factors through~$T$ with constant~$1$ and error~$\eta$. Now, using $c \geq \delta$ and an analogous argument to the first case above (where $c \geq 1/2$), we obtain the stated result.
\end{proof}

\section{Reduction to positive diagonal}

In this section, we prove that for every bounded linear operator~$T$ on~$Y_{\omega}$ with large diagonal, there exists a bounded linear operator~$\widetilde{T}$ with large \emph{positive} diagonal such that $\tilde{T}$ factors through $T$ (\Cref{pro:positive-diagonal}). Finally, using this result together with \Cref{thm:primary-and-pos-fact-prop}~\eqref{thm:primary-and-pos-fact-prop:ii}, we obtain the factorization property of the basis $(h_I^n)_{(n,I)\in \mathcal{D}_{\omega}}$ of $Y_{\omega}$ (\Cref{thm:fact-property}).

Again, our proof is based on a finite-dimensional result from \cite{MR4884827}:
\begin{lem}\label{lem:positive-diagonal}
  Let $Y$ be a Haar system Hardy space, and let $\eta,\delta > 0$. Moreover, let $n,N\in \mathbb{N}_0$ be chosen so that
  \begin{equation}\label{eq:2}
    N \ge N_2(n,\eta) := 2n \biggl\lceil \frac{n}{\eta} + 1 \biggr\rceil 2^n.
  \end{equation}
  Then for every linear operator $T\colon Y_N\to Y_N$ with $\delta$-large diagonal with respect to the Haar system, there exists a finite almost faithful Haar system $(\tilde{h}_I)_{I\in \mathcal{D}_{\le n}}$ in~$Y_N$ such that
  \begin{equation*}
     \langle \tilde{h}_I, T \tilde{h}_I \rangle \ge \delta \|\tilde{h}_I\|_{L^2}^2 \text{ for all }I\in \mathcal{D}_{\le n} \quad \text{or}\quad  \langle \tilde{h}_I, T \tilde{h}_I \rangle \le -\delta \|\tilde{h}_I\|_{L^2}^2 \text{ for all } I\in \mathcal{D}_{\le n}.
  \end{equation*}
  Moreover, the Haar supports $\mathcal{B}_I$ of $\tilde{h}_I$ satisfy
  \begin{equation*}
    0\le \frac{|I|}{|\mathcal{B}_I^{*}|} - \frac{1}{\mu} \le \frac{\eta}{n \mu},\qquad I\in \mathcal{D}_{\le n},
  \end{equation*}
  where $\mu = |\mathcal{B}_{[0,1)}^{*}| \ge 1/2$.
\end{lem}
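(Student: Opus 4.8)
The plan is to deal with the two difficulties of the statement separately: the \emph{off-diagonal interaction} inside a block, and the construction of a \emph{sign-homogeneous, density-calibrated} tree. Write $d_K = \langle h_K, Th_K\rangle/|K|$, so that $|d_K|\ge \delta$ for all $K\in\mathcal{D}_{\le N}$. For a prospective block $\tilde{h}_I = \sum_{K\in\mathcal{B}_I}\theta_K h_K$ with the $K\in\mathcal{B}_I$ pairwise disjoint, expand
\[
  \langle \tilde{h}_I, T\tilde{h}_I\rangle = \sum_{K,K'\in\mathcal{B}_I}\theta_K\theta_{K'}\langle h_K, Th_{K'}\rangle .
\]
The decisive observation is that, over a \emph{uniformly random} choice of signs $(\theta_K)_{K\in\mathcal{B}_I}$, every off-diagonal term averages to zero, so the expectation of this quadratic form equals its trace $\sum_{K\in\mathcal{B}_I}\langle h_K, Th_K\rangle = \sum_{K\in\mathcal{B}_I} d_K|K|$. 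Since the maximum of a quantity is at least its average, there is a deterministic choice of signs with $\langle \tilde{h}_I, T\tilde{h}_I\rangle \ge \sum_{K\in\mathcal{B}_I} d_K|K|$; and because the Haar supports of distinct blocks are disjoint, these sign choices can be carried out node by node as the tree is built from the root down. In particular, if every $K\in\mathcal{B}_I$ satisfies $d_K\ge \delta$, then $\langle \tilde{h}_I, T\tilde{h}_I\rangle \ge \delta\sum_{K\in\mathcal{B}_I}|K| = \delta|\mathcal{B}_I^{*}| = \delta\|\tilde{h}_I\|_{L^2}^2$, which is exactly the positive alternative of the conclusion (the negative case being symmetric). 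I want to stress that this step uses neither $\|T\|$ nor $\delta$, which is why they do not enter $N_2$.

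With the off-diagonal terms neutralized, the task reduces to a purely combinatorial one: build a depth-$n$ almost faithful Haar system (necessarily \emph{multi-level}, since single-level or single-interval blocks cannot meet the density constraint once $\eta/n<1$) all of whose support intervals $K$ carry one fixed diagonal sign, say $d_K\ge\delta$, and whose support densities are nearly constant along the tree. First I would fix the sign by a dichotomy: since $|d_K|\ge\delta$ for every dyadic interval, the set of points lying in some interval with $d_K\ge\delta$ and the set of points lying in some interval with $d_K\le-\delta$ together cover $[0,1)$, so one of them has measure at least $1/2$; assuming it is the positive set, I aim for the positive alternative, which will give $\mu=|\mathcal{B}_{[0,1)}^{*}|\ge1/2$. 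The subtlety is that the child regions $[\tilde{h}_I=\pm1]$ depend on the \emph{signs} chosen for $\tilde h_I$, so sign selection and region control are a priori entangled. I would break this by selecting only intervals that are \emph{hereditarily positive-coverable through depth $n$}, i.e.\ intervals $K$ with $d_K\ge\delta$ both of whose halves $K^{\pm}$ can themselves be nearly covered by such intervals, recursively $n$ times. For such a selection both halves of every chosen interval are positive-coverable, so the child regions are good \emph{regardless} of the parent's signs; the construction and the sign optimization of the first paragraph then decouple cleanly.

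The remaining work, and the main obstacle, is quantitative: one must show that the global budget $N\ge N_2(n,\eta)=2n\lceil n/\eta+1\rceil\,2^n$ of dyadic levels is enough to produce, at each of the at most $2^{n+1}$ nodes, a hereditarily positive-coverable family filling its region up to the fraction $1/(1+\eta/n)$, while simultaneously forcing the densities $|\mathcal{B}_I^{*}|/|I|$ into the common window $[\mu/(1+\eta/n),\mu]$. The plan is to spend this budget on two nested pigeonholes: the factor $\lceil n/\eta+1\rceil$ quantizes the attainable coverages into buckets of relative width $\eta/n$, so that a single target density can be hit at every node by discarding a controlled portion of each greedy cover, and the factors $2^{n}$ and $n$ account for the number of nodes and for the $n$ descents, each consuming a fresh, sufficiently fine block of frequencies so that the children can again be nearly filled. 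The delicate point—reconciling one \emph{global} sign with near-full hereditary coverage at every node down to depth $n$, against an adversarial $T$ whose diagonal may be opposite-signed on whole subregions—is precisely where the bulk of the estimate of \cite{MR4884827} lies. Once the hereditary covers are in place, the density bound $0\le |I|/|\mathcal{B}_I^{*}|-1/\mu\le \eta/(n\mu)$ follows from the bucketing, and combining it with the sign-averaging step yields the full statement.
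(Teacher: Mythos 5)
Your overall strategy --- kill the off-diagonal terms $\langle h_K, Th_{K'}\rangle$ by random-sign averaging, and reduce the problem to a Gamlen--Gaudet-type selection of same-sign, density-calibrated support intervals --- is exactly the strategy of the result the paper invokes here: the paper's own proof of this lemma is a citation to \cite[Lemma~6.1]{MR4884827}, which uses a finite Gamlen--Gaudet construction. Your first paragraph is correct and clean: since $\mathbb{E}[\theta_K\theta_{K'}]=0$ for distinct $K,K'\in\mathcal{B}_I$, some deterministic choice of signs gives $\langle\tilde h_I,T\tilde h_I\rangle\ge\sum_{K\in\mathcal{B}_I}d_K|K|\ge\delta|\mathcal{B}_I^{*}|=\delta\|\tilde h_I\|_{L^2}^2$ once all $K\in\mathcal{B}_I$ carry the same diagonal sign, and this optimization is per-block (the blocks have disjoint Haar supports), hence compatible with a top-down construction provided the region problem is decoupled --- which your ``hereditarily positive-coverable'' device is the right way to do.

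There are, however, two genuine gaps. First, your sign-fixing dichotomy is vacuous as stated: the ``set of points lying in some interval with $d_K\ge\delta$'' is all of $[0,1)$ whenever $d_{[0,1)}\ge\delta$, because the root $[0,1)$ is itself such an interval. So your dichotomy always holds trivially, and the sign it selects can be unusable: if $d_{[0,1)}\ge\delta$ but $d_K\le-\delta$ for every $K\ne[0,1)$, your ``positive set'' has measure $1$, yet no depth-$n$ almost faithful system with all-positive support intervals exists for any $n\ge1$ (only one positive interval is available), and the construction must run with the \emph{negative} sign. The correct dichotomy must be multi-scale --- e.g.\ count, for each point, at how many of the $N+1$ levels it lies in a positive resp.\ negative interval, and take the set where one sign wins on many levels --- and proving that one sign then admits \emph{hereditarily} coverable sets of measure close to $1/2$, compatible with your recursive notion, is already a substantial part of the work; your proposal never establishes $\mu\ge1/2$ for the hereditary notion it actually uses. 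Second, and decisively, the quantitative heart of the lemma --- that the budget $N\ge 2n\lceil n/\eta+1\rceil 2^n$ suffices to produce, at every one of the roughly $2^{n+1}$ nodes, hereditary same-sign covers whose densities $|\mathcal{B}_I^{*}|/|I|$ all land in the common window $[\mu/(1+\eta/n),\mu]$ --- is presented only as a plan (``two nested pigeonholes''), and you explicitly defer ``the bulk of the estimate'' to \cite{MR4884827}. Since that estimate is the actual content of the lemma (the sign-averaging step is soft), what you have is an accurate outline of the cited argument rather than a proof of the statement.
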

\begin{proof}
  This is shown in \cite[Lemma~6.1]{MR4884827} using a finite version of the Gamlen-Gaudet construction~\cite{MR0328575}; the properties of the system $(\tilde{h}_I)_{I\in \mathcal{D}_{\le N}}$ follow from~(5.3) in~\cite{MR4884827} and from the final estimates in the proof.
\end{proof}

Next, we prove that every bounded linear operator on~$Y_{\omega}$ with large diagonal can be reduced to a bounded linear operator with large \emph{positive} diagonal.

\begin{pro}\label{pro:positive-diagonal}
Let~$Y$ be a Haar system Hardy space, let $\delta,\eta > 0$, and let~$T\colon Y_{\omega}\to Y_{\omega}$ be a bounded linear operator with $\delta$-large diagonal with respect to~$(h_I^n)_{(n,I)\in \mathcal{D}_{\omega}}$. Then there exists another bounded linear operator~$\widetilde{T}\colon Y_{\omega}\to Y_{\omega}$ with $\delta$-large \emph{positive} diagonal such that~$\widetilde{T}$ factors through~$T$ with constant~$4+\eta$.
\end{pro}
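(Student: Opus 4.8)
The plan is to reduce the problem, component by component, to the finite-dimensional positivity result \Cref{lem:positive-diagonal}, and then to assemble the resulting finite almost faithful systems into a single almost faithful Haar system in $Y_\omega$ whose associated operators $A,B$ from \Cref{thm:operators-A-B-almost-faithful} manufacture $\widetilde T$. Concretely, I would choose a strictly increasing function $N\colon \mathbb{N}_0\to\mathbb{N}_0$ inductively, and for each $n$ consider the induced operator $T_{N(n)} = J_{N(n)}^{-1}P_{N(n)}TJ_{N(n)}$ on $Y_{N(n)}$. By \Cref{rem:isometric-isomorphism-Jn}, $T_{N(n)}$ has the same diagonal entries as $T$ on the component $H_\omega^{N(n)}$, so it has $\delta$-large diagonal with respect to the Haar system. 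Taking $N(n)$ above the threshold $N_2(n,\eta_n')$ of \Cref{lem:positive-diagonal} for a suitably small auxiliary parameter $\eta_n'>0$, I apply \Cref{lem:positive-diagonal} to $T_{N(n)}$ to obtain a finite almost faithful Haar system $(\tilde h_I^n)_{I\in\mathcal{D}_{\le n}}$ in $Y_{N(n)}$ satisfying the stated support size estimates. Setting $\tilde b_I^n = J_{N(n)}\tilde h_I^n$ then produces an almost faithful Haar system $(\tilde b_I^n)_{(n,I)\in\mathcal{D}_\omega}$ in $Y_\omega$ in the sense of \Cref{dfn:faithful-system-independent-sum}.

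The parameters $\eta_n'$ should be chosen so that the support estimate $0\le |I|/|B_I^n| - 1/\mu_n \le \eta_n'/(n\mu_n)$ of \Cref{lem:positive-diagonal}, together with $\mu_n\ge 1/2$, forces $\bigl||I|/|B_I^n| - 1/\mu_n\bigr|\le 8^{-1}4^{-n}\eta$ for all $(n,I)$. Then the hypotheses of \Cref{thm:operators-A-B-almost-faithful} are satisfied, so the associated operators $A,B\colon Y_\omega\to Y_\omega$ obey $AB = I_{Y_\omega}$, $\|B\|\le 1$ and $\|A\|\le 4+\eta$. A direct computation, using that $\langle h_I^n, h_J^m\rangle$ equals $|I|$ when $(n,I)=(m,J)$ and vanishes otherwise, shows that the diagonal entry of $ATB$ at $(m,J)$ equals $\langle \tilde b_J^m, T\tilde b_J^m\rangle/|B_J^m|$, which by \Cref{rem:isometric-isomorphism-Jn} equals $\langle \tilde h_J^m, T_{N(m)}\tilde h_J^m\rangle/|B_J^m|$. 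Since $|B_J^m| = \|\tilde h_J^m\|_{L^2}^2$, the dichotomy in \Cref{lem:positive-diagonal} tells me that for each fixed $m$ this quantity is either $\ge\delta$ for all $J\in\mathcal{D}_{\le m}$ or $\le -\delta$ for all $J\in\mathcal{D}_{\le m}$; let $\epsilon_m\in\{\pm1\}$ record which alternative occurs.

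The main obstacle is that these signs $\epsilon_m$ genuinely vary with the component and cannot be removed by altering $(\tilde b_I^n)$, since replacing $\tilde b_I^n$ by $-\tilde b_I^n$ leaves the quadratic form $\langle \tilde b_I^n, T\tilde b_I^n\rangle$ unchanged. To correct the sign I introduce the component sign-flip $V\colon Y_\omega\to Y_\omega$, $Vh_I^n = \epsilon_n h_I^n$, and set $\widetilde T = VATB$; its diagonal entry at $(m,J)$ is $\epsilon_m\langle\tilde b_J^m, T\tilde b_J^m\rangle/|B_J^m|\ge\delta$, so $\widetilde T$ has $\delta$-large positive diagonal. The delicate point is the norm: a priori $\|V\|\le 2$, because by \Cref{lem:schauder-basis} the finite-dimensional Schauder decomposition $(H_\omega^n)$ is only $2$-unconditional, which would yield the inferior constant $2(4+\eta)$.

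The crucial observation is that the factor $2$ carried by $V$ is already built into the bound $\|A\|\le 4+\eta$ and need not be paid twice. Indeed, writing $A = M\hat A R S$ as in the proof of \Cref{thm:operators-A-B-almost-faithful} (with $\hat A, R, S$ of norm $\le 1$ and $M$ the diagonal operator $Mh_I^n = (|I|/|B_I^n|)h_I^n$), the sign-flip commutes with the diagonal $M$, so $VA = (VM)\hat A R S$. Now $VM$ is again diagonal, with entries $\epsilon_n|I|/|B_I^n|$; splitting it exactly as in that proof into the block-scalar part $h_I^n\mapsto (\epsilon_n/\mu_n)h_I^n$ and a remainder with entries bounded by $8^{-1}4^{-n}\eta$, the block-scalar part has norm $\le 2\sup_n|1/\mu_n|\le 4$ by the very same $2$-unconditionality of $(H_\omega^n)$ (the signs $\epsilon_n$ do not affect this estimate), while the remainder has norm $\le\eta$ by \Cref{lem:diagonal-operator-bounded}. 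Hence $\|VA\|\le 4+\eta$, and the factorization $\widetilde T = (VA)\,T\,B$ with $\|VA\|\cdot\|B\|\le 4+\eta$ finishes the proof. Apart from this sign bookkeeping, the only remaining work is the routine inductive choice of the $\eta_n'$ and of the frequencies $N(n)$ guaranteeing that both \Cref{lem:positive-diagonal} and \Cref{thm:operators-A-B-almost-faithful} are applicable.
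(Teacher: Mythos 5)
Your proposal is correct, and its skeleton is the same as the paper's proof: apply \Cref{lem:positive-diagonal} component-wise to $T_{N(n)} = J_{N(n)}^{-1}P_{N(n)}TJ_{N(n)}$ with $N(n)\ge N_2(n,\eta_n')$, transport the resulting finite almost faithful systems by $J_{N(n)}$ into an almost faithful Haar system $(\tilde b_I^n)_{(n,I)\in\mathcal{D}_\omega}$ in $Y_\omega$, check the support hypotheses of \Cref{thm:operators-A-B-almost-faithful}, and read off the diagonal entries $\langle \tilde b_J^m, T\tilde b_J^m\rangle/|B_J^m| = \langle \tilde h_J^m, T_{N(m)}\tilde h_J^m\rangle/\|\tilde h_J^m\|_{L^2}^2$ of the conjugated operator.

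Where you genuinely depart from the paper is in the treatment of the sign dichotomy, and this is worth spelling out. The paper's proof disposes of it with the phrase ``after replacing $T$ by $-T$ if necessary,'' i.e.\ a single global sign change of the operator; read literally, this presumes that the alternative selected by \Cref{lem:positive-diagonal} is the same for every component, which need not hold (consider a diagonal operator with entries $+\delta$ on even components and $-\delta$ on odd ones). The complete argument behind the paper's phrasing is a pigeonhole step: one alternative occurs for infinitely many $n$, one re-indexes along that subsequence (restricting an almost faithful system to fewer levels keeps it almost faithful), and then a global flip of $T$ suffices --- harmless for the conclusion, since a factorization through $-T$ is a factorization through $T$ after absorbing the sign into $A$. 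You instead keep all components and correct the signs by the block multiplier $Vh_I^n = \epsilon_n h_I^n$, and your norm bookkeeping is exactly right: since $A = M\hat A R S$ with $\|\hat A\|,\|R\|,\|S\|\le 1$, you get $VA = (VM)\hat A R S$, and $VM$ splits into the block-scalar part with entries $\epsilon_n/\mu_n\in[-2,2]$, which has norm at most $4$ by the same unconditionality argument used for $\tilde M$ in the proof of \Cref{thm:operators-A-B-almost-faithful} (arbitrary signs cost nothing there), plus a diagonal remainder with entries bounded by $8^{-1}4^{-n}\eta$, handled by \Cref{lem:diagonal-operator-bounded}; hence $\|VA\|\le 4+\eta$ and the constant survives, whereas the naive bound $\|V\|\cdot\|A\|\cdot\|B\|$ would have given $2(4+\eta)$. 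So your route buys a self-contained fix that uses all components and avoids re-indexing, at the price of the extra operator $V$ and its norm analysis; the paper's route, once the implicit subsequence argument is supplied, buys a shorter write-up. Both are valid proofs of the proposition with the stated constant.
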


\begin{proof}
  For $n\in \mathbb{N}_0$, put $\eta_n = 4^{-(n+2)}\eta$. Let $N\colon \mathbb{N}_0\to \mathbb{N}_0$ be a strictly increasing function such that $N(n)\ge N_2(n,\eta_n)$ for all $n\in \mathbb{N}_0$, where $N_2$ is as in~\eqref{eq:2}. For every $n\in \mathbb{N}_0$, consider the operator $T_{N(n)}\colon Y_{N(n)}\to Y_{N(n)}$ defined as $T_{N(n)} = J_{N(n)}^{-1}P_{N(n)}TJ_{N(n)}$, and recall that by \Cref{rem:isometric-isomorphism-Jn}, $T_{N(n)}$ has $\delta$-large diagonal. Hence, by \Cref{lem:positive-diagonal}, there exists a finite almost  faithful Haar system $(\tilde{h}_I^n)_{I\in \mathcal{D}_{\le n}}$ in~$Y_{N(n)}$ such that, after replacing~$T$ by~$-T$ if necessary, we have
  \begin{equation}\label{eq:19}
    \langle \tilde{h}_I^n, T_{N(n)} \tilde{h}_I^n \rangle \ge \delta \|\tilde{h}_I^n\|_{L^2}^2, \qquad I\in \mathcal{D}_{\le n}.
  \end{equation}
  Moreover, we have
  \begin{equation}\label{eq:4}
    0\le \frac{|I|}{|B_I^n|} - \frac{1}{\mu_{n}} \le \frac{\eta_n}{n \mu_n},\qquad I\in \mathcal{D}_{\le n},
  \end{equation}
  where $B_I^n = \operatorname{supp} \tilde{h}_I^n$ for all $I\in \mathcal{D}_{\le n}$ and $\mu_n = |B_{[0,1)}^n| \ge 1/2$. Now, for $(n,I)\in \mathcal{D}_{\omega}$, put $\tilde{b}_I^n = J_{N(n)} \tilde{h}_I^n\in Y_{\omega}$. Then $(\tilde{b}_I^n)_{(n,I) \in \mathcal{D}_{\omega}}$ is an almost faithful Haar system in $Y_{\omega}$ which satisfies the assumptions of~\Cref{thm:operators-A-B-almost-faithful} (this follows from~\eqref{eq:4} and \Cref{rem:isometric-isomorphism-Jn}). Thus, the associated operators $A,B\colon Y_{\omega}\to Y_{\omega}$ are well-defined and satisfy $AB = I_{Y_{\omega}}$ as well as $\|B\|\le 1$ and $\|A\|\leq 4 + \eta$. Finally, put $\widetilde{T} = ATB$. Then $\widetilde{T}$ factors through~$T$ with constant $4+\eta$ and error~$0$, and $\widetilde{T}$ has $\delta$-large \emph{positive} diagonal: For every $(n,I)\in \mathcal{D}_{\omega}$, using~\eqref{eq:19}, \Cref{rem:isometric-isomorphism-Jn} and the fact that $\|\tilde{b}_I^n\|_{L^2}^2 = \|\tilde{h}_I^n\|_{L^2}^2$, we have
  \begin{equation*}
    \frac{\langle h_I^n, \widetilde{T} h_I^n \rangle}{|I|}
    = \frac{\langle \tilde{b}_I^n, T \tilde{b}_I^n \rangle}{\|\tilde{b}_I^n\|_{L^2}^2}
    = \frac{\langle \tilde{h}_I^n, T_{N(n)} \tilde{h}_I^n \rangle}{\|\tilde{h}_I^n\|_{L^2}^2} \ge \delta.\qedhere
  \end{equation*}
\end{proof}

By combining \Cref{pro:positive-diagonal} with \Cref{thm:primary-and-pos-fact-prop}~\eqref{thm:primary-and-pos-fact-prop:ii}, we obtain the factorization property of the basis~$(h_I^n)_{(n,I)\in \mathcal{D}_{\omega}}$ of $Y_{\omega}$:

\begin{thm}\label{thm:fact-property}
  Let~$Y = X_0(\mathbf{r})$ be a Haar system Hardy space and suppose that the sequence of standard Rademacher functions $(r_n)_{n=0}^{\infty}$ is weakly null in~$Y$. Then for every~$\delta > 0$ and every bounded linear operator $T \colon Y_{\omega} \to Y_{\omega}$ with $\delta$-large diagonal with respect to~$(h_I^n)_{(n,I)\in \mathcal{D}_{\omega}}$, the identity~$I_{Y_{\omega}}$ factors through~$T$ with constant~$(4/\delta)^+$ (and with constant~$(1/\delta)^+$ if~$\mathbf{r}$ is independent).
\end{thm}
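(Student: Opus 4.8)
The plan is to obtain this as a direct combination of the two preceding results, \Cref{pro:positive-diagonal} and \Cref{thm:primary-and-pos-fact-prop}~\eqref{thm:primary-and-pos-fact-prop:ii}, via the transitivity of projectional factorizations (\Cref{rem:transitivity}), treating the general case and the independent case separately. For a general Haar system Hardy space, I would begin with an arbitrary bounded operator $T\colon Y_\omega\to Y_\omega$ with $\delta$-large diagonal and apply \Cref{pro:positive-diagonal} to produce an operator $\widetilde{T}$ with $\delta$-large \emph{positive} diagonal that factors through $T$ with constant $4+\eta$ and error $0$, for a prescribed $\eta>0$. Since $\widetilde{T}$ has $\delta$-large positive diagonal, \Cref{thm:primary-and-pos-fact-prop}~\eqref{thm:primary-and-pos-fact-prop:ii} then provides a factorization of $I_{Y_\omega}$ through $\widetilde{T}$ with constant $(1/\delta)^+$ and error $0$.

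To assemble the target constant, I would fix $\varepsilon>0$ and choose $\eta>0$ together with an auxiliary parameter $\eta'>0$ so small that $(4+\eta)(1/\delta+\eta')\le 4/\delta+\varepsilon$. Applying \Cref{thm:primary-and-pos-fact-prop}~\eqref{thm:primary-and-pos-fact-prop:ii} with constant $1/\delta+\eta'$ and \Cref{pro:positive-diagonal} with constant $4+\eta$, and chaining the two factorizations with \Cref{rem:transitivity} (both errors being $0$, so transitivity contributes no additive term), one obtains a factorization of $I_{Y_\omega}$ through $T$ with constant $(4+\eta)(1/\delta+\eta')\le 4/\delta+\varepsilon$ and error $0$. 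As $\varepsilon>0$ was arbitrary, this shows that $I_{Y_\omega}$ factors through $T$ with constant $(4/\delta)^+$.

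In the independent case I would avoid \Cref{pro:positive-diagonal} altogether, since its use of the Gamlen--Gaudet construction is exactly what costs the factor $4$. Here \Cref{lem:schauder-basis} guarantees that $(h_I^n)_{(n,I)\in\mathcal{D}_\omega}$ is $1$-unconditional, so, writing $d_I^n=\langle h_I^n,Th_I^n\rangle/|I|$ (which satisfies $|d_I^n|\ge\delta$), the diagonal operator $E$ defined by $Eh_I^n=\sign(d_I^n)\,h_I^n$ is bounded with $\|E\|=1$. Because $E$ is diagonal with respect to the basis, the diagonal entries of $ET$ are $\sign(d_I^n)\,d_I^n=|d_I^n|\ge\delta$, so $ET$ has $\delta$-large positive diagonal. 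Applying \Cref{thm:primary-and-pos-fact-prop}~\eqref{thm:primary-and-pos-fact-prop:ii} to $ET$ yields operators $A,B$ with $\|A\|\,\|B\|\le 1/\delta+\eta'$ and $A(ET)B=I_{Y_\omega}$; setting $A'=AE$, we have $A'TB=I_{Y_\omega}$ and $\|A'\|\,\|B\|\le\|A\|\,\|B\|\le 1/\delta+\eta'$, so $I_{Y_\omega}$ factors through $T$ with constant $(1/\delta)^+$.

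Since the substantial work has already been carried out in \Cref{pro:positive-diagonal} and \Cref{thm:primary-and-pos-fact-prop}, the present argument is essentially bookkeeping, and I do not anticipate a genuine obstacle. The only points requiring care are that all intermediate factorizations have error $0$, so that the transitivity estimate of \Cref{rem:transitivity} adds no error and the constants simply multiply, and, in the independent case, that the sign-flip $E$ is an isometry, so that precomposing $A$ with it leaves the factorization constant unchanged.
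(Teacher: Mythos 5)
Your proposal is correct and is essentially the paper's own argument: the general case is exactly the combination of \Cref{pro:positive-diagonal} with \Cref{thm:primary-and-pos-fact-prop}~\eqref{thm:primary-and-pos-fact-prop:ii} via \Cref{rem:transitivity}, with the constants multiplying since all errors are zero. Your sign-flip isometry $E$ in the independent case is precisely the intended use of the $1$-unconditionality from \Cref{lem:schauder-basis} that the paper alludes to when it remarks that, for an unconditional basis, large positive diagonal and large diagonal are equivalent for factorization purposes.
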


\noindent\textbf{Acknowledgments.}
The authors would like to thank Richard Lechner and Pavlos Motakis for many helpful comments and suggestions.

\bibliographystyle{abbrv}%
\bibliographystyle{plain}%
\bibliography{bibliography}%

\end{document}